\documentclass[12pt,a4paper]{amsart}
\usepackage[czech,english]{babel}
\usepackage{amssymb,eucal}
\usepackage[cmtip,arrow]{xy}
\usepackage{pb-diagram,pb-xy} 
\usepackage{hyperref}

\pagestyle{plain}
\raggedbottom

\textwidth=36pc
\calclayout

\emergencystretch=2em

\newcommand{\+}{\nobreakdash-}
\newcommand{\<}{\nobreakdash--}

\renewcommand{\:}{\colon}
\renewcommand{\.}{\mskip.5\thinmuskip}

\newcommand{\rarrow}{\longrightarrow}

\newcommand{\ot}{\otimes}
\newcommand{\ocn}{\odot}
\newcommand{\tim}{\rightthreetimes} 
\newcommand{\ov}{\overline}

\DeclareMathOperator{\Hom}{Hom}

\DeclareMathOperator{\Ext}{Ext}
\DeclareMathOperator{\PExt}{PExt}
\DeclareMathOperator{\Ctrtor}{Ctrtor}
\DeclareMathOperator{\id}{id}
\DeclareMathOperator{\im}{im}

\newcommand{\lrarrow}{\.\relbar\joinrel\relbar\joinrel\rightarrow\.}

\newcommand{\modl}{{\operatorname{\mathsf{--mod}}}}

\newcommand{\contra}{{\operatorname{\mathsf{--contra}}}}
\newcommand{\discr}{{\operatorname{\mathsf{discr--}}}}

\newcommand{\Ab}{\mathsf{Ab}}
\newcommand{\Sets}{\mathsf{Sets}}
\newcommand{\Add}{\mathsf{Add}}
\newcommand{\proj}{{\mathsf{proj}}}

\newcommand{\R}{\mathfrak R}

\newcommand{\C}{\mathfrak C}
\newcommand{\D}{\mathfrak D}
\newcommand{\F}{\mathfrak F}
\newcommand{\G}{\mathfrak G}
\renewcommand{\P}{\mathfrak P}
\newcommand{\Q}{\mathfrak Q}
\renewcommand{\H}{\mathfrak H}
\newcommand{\B}{\mathfrak B}
\newcommand{\I}{\mathfrak I}
\newcommand{\J}{\mathfrak J}
\newcommand{\K}{\mathfrak K}
\renewcommand{\L}{\mathfrak L}
\newcommand{\fM}{\mathfrak M}
\newcommand{\fN}{\mathfrak N}

\newcommand{\ft}{\mathfrak t}
\newcommand{\fp}{\mathfrak p}
\newcommand{\ii}{\mathfrak i}
\newcommand{\fb}{\mathfrak b}
\newcommand{\fc}{\mathfrak c}
\newcommand{\fs}{\mathfrak s}

\newcommand{\fk}{\mathfrak k}
\newcommand{\fl}{\mathfrak l}
\newcommand{\fm}{\mathfrak m}
\newcommand{\fn}{\mathfrak n}
\newcommand{\fq}{\mathfrak q}
\newcommand{\fj}{\mathfrak j}
\newcommand{\fg}{\mathfrak g}
\newcommand{\fh}{\mathfrak h}
\newcommand{\ff}{\mathfrak f}

\newcommand{\blambda}{\boldsymbol\lambda}
\newcommand{\bgamma}{\boldsymbol\gamma}
\newcommand{\bbeta}{\boldsymbol\beta} 
\newcommand{\brho}{\boldsymbol\rho}

\newcommand{\Mat}{\mathfrak{Mat}}

\newcommand{\N}{\mathcal N}
\newcommand{\cS}{\mathcal S}
\newcommand{\cV}{\mathcal V}

\newcommand{\sA}{\mathsf A}
\newcommand{\sB}{\mathsf B}
\newcommand{\sC}{\mathsf C}
\newcommand{\sE}{\mathsf E}
\newcommand{\sL}{\mathsf L}

\newcommand{\Z}{{\mathbb Z}}

\theoremstyle{plain}
\newtheorem{thm}{Theorem}[section]
\newtheorem{prop}[thm]{Proposition}
\newtheorem{lem}[thm]{Lemma}

\newtheorem{conj}[thm]{Conjecture}
\newtheorem{cor}[thm]{Corollary}
\newtheorem{appl}[thm]{Application}
\theoremstyle{definition}

\newcommand{\Section}[1]{\bigskip\section{#1}\medskip}
\setcounter{tocdepth}{1} 

\begin{document}

\title{Projective covers of flat contramodules}

\author[S.~Bazzoni]{Silvana Bazzoni}

\address[Silvana Bazzoni]{%
Dipartimento di Matematica ``Tullio Levi-Civita'' \\
Universit\`a di Padova \\
Via Trieste 63, 35121 Padova, Italy}

\email{bazzoni@math.unipd.it}

\author[L.~Positselski]{Leonid Positselski}

\address[Leonid Positselski]{%
Institute of Mathematics of the Czech Academy of Sciences \\
\v Zitn\'a~25, 115~67 Prague~1 \\ Czech Republic; and
\newline\indent Laboratory of Algebra and Number Theory \\
Institute for Information Transmission Problems \\
Moscow 127051 \\ Russia} 

\email{positselski@math.cas.cz}

\author[J.~\v S\v tov\'\i\v cek]{Jan \v S\v tov\'\i\v cek}

\address[Jan {\v S}{\v{t}}ov{\'{\i}}{\v{c}}ek]{%
Charles University in Prague, Faculty of Mathematics and Physics,
Department of Algebra, Sokolovsk\'a 83, 186 75 Praha,
Czech Republic}

\email{stovicek@karlin.mff.cuni.cz}

\begin{abstract}
 We show that a direct limit of projective contramodules (over a right
linear topological ring) is projective if it has a projective cover.
 A similar result is obtained for $\infty$\+strictly flat contramodules
of projective dimension not exceeding~$1$, using an argument based on
the notion of the topological Jacobson radical.
 Covers and precovers of direct limits of more general classes of
objects, both in abelian categories with exact and with nonexact
direct limits, are also discussed, with an eye towards the Enochs
conjecture about covers and direct limits, using locally split
(mono)morphisms as the main technique.
 In particular, we offer a simple elementary proof of
the Enochs conjecture for the left class of an $n$\+tilting cotorsion
pair in an abelian category with exact direct limits.
\end{abstract}

\maketitle

\tableofcontents

\section{Introduction}
\medskip

 The notion of a projective cover is dual to that of an injective
envelope.
 While injective envelopes exist in all Grothendieck abelian categories,
projective covers are more rare.
 It was shown in the classical paper of Bass~\cite{Bas} that all
left modules over an associative ring $R$ have projective covers if
and only if all flat left $R$\+modules are projective.
 Such rings were called \emph{left perfect} in~\cite{Bas}.
 Subsequently people realized that if a flat module over an associative
ring has a projective cover, then such module is projective (see,
e.~g., \cite[Section~36.3]{Wis}).

 The classical \emph{Govorov--Lazard theorem}~\cite{Gov,Laz} tells that
the flat modules are precisely the direct limits of (finitely generated)
projective modules.
 It is not known whether an analogue of this result holds for
contramodules.
 It is only clear that all the direct limits of projective contramodules
are flat.
 In fact, in Corollary~\ref{direct-limits-1-strictly-flat} we show that
all the direct limits of projective contramodules belong to a possibly
more narrow class of \emph{$1$\+strictly flat} contramodules.

 A \emph{Bass flat module} over an associative ring $R$ is a countable
direct limit of copies of the free $R$\+module with one generator
$R=R[*]$.
 All Bass flat modules have projective dimension at most~$1$.
 This class of modules and its generalizations played an important role
both in Bass' paper~\cite{Bas} and in subsequent works
(see, e.~g., the recent papers~\cite{Sar,AST}).
 In this paper, we consider the analogous class of contramodules over
a topological ring.
 Our results imply that a Bass flat contramodule cannot have
a projective cover unless it is projective.

 In fact, we prove two different generalizations of the latter claim,
provable by very different techniques.
 On the one hand, Theorem~\ref{1-strictly-flat-projdim1-cover-thm}
tells that an $\infty$\+strictly flat contramodule of projective
dimension not exceeding~$1$ is projective if it has a projective cover.
 The proof is based on the concept of the topological Jacobson radical
of a topological ring.
 On the other hand, by
Corollary~\ref{direct-limits-proj-contramodules-no-proj-covers},
the same assertion applies to an arbitrary (not necessarily countable)
direct limit of projective contramodules.
 The proof is based on considerations of local splitness.

 Speaking of the latter, one first of all has to distinguish between
\emph{locally split monomorphisms} and \emph{locally split epimorphisms}
of modules.
 These are two different theories (even if related by a vague analogy).

 The notion of a locally split monomorphism of modules seems to go
back to Chase's exposition of a result of
Villamayor~\cite[Proposition~2.2]{Ch}.
 Subsequently the property was studied by Rangaswamy with
coauthors~\cite{RR,JR}, who called such monomorphisms (or submodules)
``strongly pure''.
 The terminology ``locally split submodule'' appeared in
Azumaya's paper~\cite{Az1}.
 Zimmermann~\cite{Zim} uses the ``strongly pure monomomorphism''
terminology.
 It is the locally split \emph{monomorphisms} of modules that are
relevant for the categorical generalization developed in the present
paper.

 Locally split epimorphisms of modules were studied (under various names)
by Rangaswamy~\cite{Rang} and Azumaya~\cite{Az0,Az2}.
 An attempt to compare and formulate the connections between
the concepts of locally split monomorphisms and locally split
epimorphisms is made in the recent paper~\cite{YY}.

 The \emph{Enochs conjecture} (or ``a question of Enochs'') suggests
that any covering class of modules is closed under direct
limits~\cite[Section~5.4]{GT} (cf.~\cite[Section~5]{AST}).
 This problem was addressed in the papers~\cite{Sar,AST}, where some
results in the direction of a positive answer to the question of Enochs
were obtained.

 The following observation related to precovers and covers of direct
limits plays a key role in the present paper.
 Let $\sC$ be a class of modules (over a fixed associative ring $A$)
closed under direct sums and direct summands, and let $(C_x)_{x\in X}$
be a direct system of modules $C_x\in\sC$, indexed by a directed
poset~$X$.
 Consider the canonical presentation
$$
 0\lrarrow K\overset i\lrarrow\bigoplus_{x\in X}C_x\overset p\lrarrow
 \varinjlim_{x\in X}C_x\lrarrow0
$$
of the direct limit $D=\varinjlim_{x\in X}C_x$.
 Then the monomorphism~$i$ is locally split, that is, for every
element $k\in K$ there exists an $A$\+module morphism $g\:C'=
\bigoplus_{x\in X}C_x\rarrow K$ such that $gi(k)=k$.
 It follows that if the epimorphism $p$~is a $\sC$\+precover of $D$
and a morphism $q\:Q\rarrow D$ is a $\sC$\+cover of $D$, then
$q$~is an isomorphism and $p$~is a split epimorphism.
 We extend this observation first to abelian categories with exact
direct limits (in Theorem~\ref{ab5-direct-limit-precover-cover-thm}),
and subsequently, in some form, to cocomplete abelian categories with
nonexact direct limits
(Theorem~\ref{ab3-direct-limit-precover-cover-thm}).
 In greater generality, we discuss quasi-split exact sequences and
locally split (mono)morphisms in cocomplete abelian categories
in connection with covers.

 It was shown in~\cite[Lemma~2.1]{GG} that, whenever in the notation
above $X$ is a linearly ordered set, the kernel of the morphism~$p$ is
the union of a chain of direct summands in $C'=\bigoplus_{x\in X}C_x$.
 Our Proposition~\ref{direct-limit-epi-quasi-split} extends this result
to all directed posets $X$ (with the difference that the kernel of~$p$
is described as the union of a directed poset of direct summands
in~$C'$).
 Moreover, Proposition~\ref{direct-limit-epi-quasi-split} applies to
arbitrary cocomplete abelian categories.

 We also deduce the following application of topological algebra and
contramodule theory to the Enochs conjecture.
 Suppose that a left $A$\+module $M$ is what we call \emph{weakly
countably generated}; e.~g., this holds if $M$ is the sum of
a countable set of its \emph{dually slender} submodules (in
the sense of~\cite{Zem}).
 As usually, we denote by $\Add(M)$ the class of all direct summands
of direct sums of copies of~$M$.
 Assume further that for any countable direct system $M\rarrow M
\rarrow M\rarrow\dotsb$ of endomorphisms of $M$, the canonical
epimorphism $\bigoplus_{n=1}^\infty M\rarrow\varinjlim_{n\ge1} M=D$
is an $\Add(M)$\+precover of $D$, and that the $A$\+module $D$ has
an $\Add(M)$\+cover.
 Then the class of modules $\Add(M)$ is closed under direct limits.
 Moreover, the $A$\+module $M$ has a perfect decomposition
(in the sense of~\cite{AS}) in this case.
 This is the result of our
Application~\ref{weakly-countably-generated-application}.

 As another application to the Enochs conjecture, we demonstrate
a simple proof of the following assertion.
 Let $(\sL,\sE)$ be a cotorsion pair in an abelian category $\sA$
with exact direct limit functors.
 Suppose that the class $\sE$ is closed under direct limits
in~$\sA$.
 Let $M$ be an object in the kernel $\sL\cap\sE$ of the cotorsion pair.
 Assume that any direct limit of objects from $\Add(M)$ has
an $\Add(M)$\+cover in~$\sA$.
 Then the class of objects $\Add(M)$ is closed under direct limits
in~$\sA$.
 In the context of an $n$\+tilting cotorsion pair $(\sL,\sE)$, it
follows that the class of objects $\sL\subset\sA$ is closed under
direct limits whenever it is covering.
 Thus we recover some of the results of the paper~\cite{AST} with
our elementary methods (see Application~\ref{cotorsion-pair-appl}
and Corollary~\ref{tilting-corollary}).

\Section{Preliminaries}
 Throughout the paper, by ``direct limits'' we mean directed colimits,
i.~e., colimits indexed over directed posets.

 We refer to~\cite[Section~6]{PS1} or~\cite[Sections~1\<2]{Pproperf}
(see also~\cite[Introduction and Sections~5\<6]{PR}
or~\cite[Section~2]{Pcoun}) for the background material.
 This section only contains a sketch of the basic definitions and
a little discussion.

 A topological ring is said to be \emph{right linear} if its open right
ideals form a base of neighborhoods of zero.
 A right linear topological ring $\R$ is said to be \emph{separated}
if the natural map $\R\rarrow\varprojlim_{\I\subset\R}\R/\I$, where
$\I$ ranges over the open right ideals in $\R$, is injective; and $\R$
is said to be \emph{complete} if this map is surjective.
 Throughout this paper, $\R$ denotes a complete, separated right
linear topological ring.

 For any abelian group $A$ and a set $X$, we use $A[X]=A^{(X)}$ as
a notation for the direct sum of $X$ copies of~$A$.
 Elements of the group $A[X]$ are interpreted as finite formal linear
combinations of elements of $X$ with the coefficients in~$A$.
 For any set $X$, we denote by $\R[[X]]=\varprojlim_{\I\subset\R}
(\R/\I)[X]\subset \R^X$ the set of all infinite formal linear
combinations $\sum_{x\in X}r_xx$ of elements of $X$ with
the families of coefficients $(r_x\in\R)_{x\in X}$ converging to zero
in the topology of~$\R$.
 The latter condition means that, for every open right ideal
$\I\subset\R$, the subset $\{x\in X\mid r_x\notin\I\}\subset X$
is finite \cite[Section~6]{PS1}, \cite[Sections~1.5\<1.7]{Pproperf},
\cite[Section~2.7]{Pcoun}, \cite[Section~5]{PR}.

 The assignment of the set $\R[[X]]$ to an arbitrary set $X$ is
a covariant endofunctor on the category of sets,
$\R[[{-}]]\:\Sets\rarrow\Sets$.
 For any map of sets $f\:X\rarrow Y$, the induced map of sets
$\R[[f]]\:\R[[X]]\rarrow\R[[Y]]$ assigns to a formal linear combination
$\sum_{x\in X}r_xx$ the formal linear combination $\sum_{y\in Y}
s_yy$ with the coefficients $s_y=\sum_{x\in X}^{f(x)=y}r_x$.
 Here the infinite sum in the right-hand side is understood as the limit
of finite partial sums in the topology of~$\R$.

 For any set $X$, there is a natural ``point measure'' map $\epsilon_X\:
X\rarrow\R[[X]]$, assigning to every element $x\in X$ the formal
linear combination $\sum_{y\in X}r_yy$ with $r_y=1$ for $y=x$ and
$r_y=0$ otherwise.
 Moreover, for any set $X$ there is a natural ``opening of parentheses''
map $\phi_X\:\R[[\R[[X]]]]\rarrow\R[[X]]$ assigning a formal linear
combination to a formal linear combination of formal linear
combinations.
 The map~$\phi_X$ computes the products of pairs of elements in $\R$
and then the infinite sums of such products, interpreted as the limits
of finite partial sums in the topology of~$\R$.

 The functor $\R[[{-}]]$ endowed with the natural transformations
$\phi$ and~$\epsilon$ is a \emph{monad} on the category of sets.
 A \emph{left\/ $\R$\+contramodule} is defined as an algebra (or, in
our preferred terminology, a module) over this monad.
 In other words, a left $\R$\+contramodule $\C$ is a set endowed
with a \emph{left $\R$\+contraaction} map $\pi_\C\:\R[[\C]]\rarrow\C$
satisfying the conventional associativity and unitality equations of
an algebra/module over a monad $(\R[[{-}]],\phi,\epsilon)$.
 Informally, one can say that a left $\R$\+contramodule is a left
$\R$\+module endowed with infinite summation operations with
the families of coefficients converging to zero in the topology of~$\R$.

 The category of left $\R$\+contramodules $\R\contra$ is a locally
presentable abelian category with enough projective objects.
 There is an exact, faithful forgetful functor $\R\contra\rarrow\R\modl$
from the category of left $\R$\+contramodules to the category of
left $\R$\+modules; this functor preserves infinite products (but not
coproducts).
 The free $\R$\+contramodule with one generator $\R[[*]]=\R$ is
a natural projective generator of $\R\contra$.
 More generally, the projective $\R$\+contramodules are precisely
the direct summands of the \emph{free\/ $\R$\+contramodules}
$\R[[X]]$, where $X$ is an arbitrary set and the contraaction map
is $\pi_{\R[[X]]}=\phi_X$.
 For any left $\R$\+contramodule $\C$ and any set $X$ the group of
$\R$\+contramodule morphisms $\R[[X]]\rarrow\C$ is naturally
isomorphic to the group of all maps of sets $X\rarrow\C$.

  A right $\R$\+module $\N$ is said to be \emph{discrete} if
the annihilator of any element in $\N$ is an open right ideal in~$\R$
\cite[Section~7.2]{PS1}, \cite[Section~1.4]{Pproperf},
\cite[Section~2.3]{Pcoun}.
 Discrete right $\R$\+modules form a Grothendieck abelian category,
which we denote by $\discr\R$.

 Given a discrete right $\R$\+module $\N$ and an abelian group $V$,
the left $\R$\+module $\C=\Hom_\Z(\N,V)$ has a natural left
$\R$\+contramodule structure with the contraaction map given by
the rule
$$
 \left\langle b,
 \pi_\C\left(\sum\nolimits_{c\in\C}r_cc\right)
 \right\rangle=
 \sum\nolimits_{c\in\C}\langle br_c,c\rangle
 \qquad\text{for all $b\in\N$ and
 $\sum\nolimits_{c\in\C} r_cc\in\R[[\C]]$},
$$
where $\langle\ ,\ \rangle\:\N\times\Hom_\Z(\N,V)\rarrow V$
denotes the evaluation pairing.
 The sum in the right-hand side is finite because the annihilator
of~$b$ is open in $\R$ and the family of elements $(r_c\in\R)_{c\in\C}$
converges to zero.

 For any discrete right $\R$\+module $\N$ and any left
$\R$\+contramodule $\P$, the \emph{contratensor product}
$\N\ocn_\R\P$ is an abelian group constructed as the cokernel of
(the difference of) the natural pair of maps
$$
 \N\ot_\Z\R[[\P]]\,\rightrightarrows\,\N\ot_\Z\P.
$$
 Here one of the two maps is $\id_\N\ot\pi_\P$, while the other one is
given by the formula
$$
 b\ot\sum_{p\in\P}r_pp\longmapsto \sum_{p\in\P}br_p\ot p
 \qquad\text{for all $b\in\N$ and $\sum_{p\in\P}r_pp\in\R[[\P]]$},
$$
where, once again, the sum in the right-hand side is finite because
the annihilator of~$b$ is open in $\R$ and the family of elements
$r_p\in\R$ converges to zero~\cite[Section~7.2]{PS1},
\cite[Section~1.8]{Pproperf}, \cite[Section~2.8]{Pcoun},
\cite[Section~5]{PR}.

 For any discrete right $\R$\+module $\N$, any left $\R$\+contramodule
$\P$, and an abelian group $V$ there is a natural isomorphism of
abelian groups
\begin{equation} \label{contratensor-adjunction}
 \Hom^\R(\P,\Hom_\Z(\N,V))\,\simeq\,\Hom_\Z(\N\ocn_\R\P,\>V),
\end{equation}
where $\Hom^\R$ denotes the group of morphisms in the category
$\R\contra$.
 For any discrete right $\R$\+module $\N$ and any set $X$ there is
a natural isomorphism of abelian groups
\begin{equation} \label{contratensor-with-free}
 \N\ocn_\R\R[[X]]\,\simeq\,\N[X].
\end{equation}

 A left $\R$\+contramodule $\F$ is said to be \emph{flat} if the functor
of contratensor product ${-}\ocn_\R\F\:\discr\R\rarrow\Ab$, acting
from the category of discrete right $\R$\+modules to the category of
abelian groups $\Ab$, is exact.
 It is clear from the natural isomorphism~\eqref{contratensor-with-free}
that free (hence also projective) left $\R$\+contramodules are flat.
 Furthermore, it follows from the adjunction
isomorphism~\eqref{contratensor-adjunction} that the functor of
contratensor product ${-}\ocn_\R\nobreak{-}\:\discr\R\times\R\contra
\rarrow\Ab$ preserves colimits (in both of its arguments).
 It follows that the class of all flat left $\R$\+contramodules is
closed under direct limits.
 Hence all the direct limits of projective contramodules are flat.

 For the purposes of the present paper, an (apparently) stronger flatness
property of contramodules is relevant.
 The left derived functor of contratensor product
$$
 \Ctrtor_*^\R:\discr\R\times\R\contra\lrarrow\Ab,
 \qquad\Ctrtor_0^\R(\N,\C)=\N\ocn_\R\C
$$
is constructed using projective resolutions of its second (contramodule)
argument.
 A left $\R$\+contramodule $\F$ is said to be \emph{$n$\+strictly flat}
(where $n\ge1$ is an integer) if $\Ctrtor_i^\R(\N,\F)=0$ for all discrete
right $\R$\+modules $\N$ and all $0<i\le n$ \cite[Section~2]{Pproperf}.
 It suffices to check these conditions for the cyclic discrete right
$\R$\+modules $\N=\R/\I$.

 An $\R$\+contramodule is \emph{$\infty$\+strictly flat} if it is
$n$\+strictly flat for all $n\ge1$.
 Obviously, a contramodule of projective dimension~$\le n$ is
$n$\+strictly flat if and only if it is $\infty$\+strictly flat.

 The kernel of an epimorphism from an $n$\+strictly flat contramodule
to an $(n+1)$-strictly flat contramodule is $n$\+strictly flat.
 The kernel of an epimorphism from a flat contramodule to
a $1$\+strictly flat contramodule is flat.
 Any $1$\+strictly flat contramodule is flat (so one can think of flat
contramodules as ``$0$\+strictly flat'').

 Clearly, for every $n\ge1$ the class of all $n$\+strictly flat left
$\R$\+contramodules is closed under extensions.
 By~\cite[Lemma~2.1]{Pproperf}, the class of all $1$\+strictly flat
contramodules is also closed under coproducts.
 We will see below in Corollary~\ref{direct-limits-1-strictly-flat}
that the class of $1$\+strictly flat contramodules is closed under
direct limits.
 So all the direct limits of projective contramodules are, in fact,
$1$\+strictly flat.

 Over a topological ring $\R$ with a countable base of neighborhoods
of zero, any flat contramodule is $\infty$\+strictly
flat~\cite[Remark~6.11 and Corollary~6.15]{PR}.

 We will use the following pieces of notation
from~\cite[Section~5]{PR} and~\cite[Sections~1.5 and~1.10]{Pproperf}.
 Given a closed right ideal $\J\subset\R$ and a set $X$, we denote by
$\J[[X]]\subset\R[[X]]$ the subgroup of all zero-convergent $X$\+indexed
families of elements of $\J$ in the group of all such families of
elements of~$\R$.
 For any left $\R$\+contramodule $\C$, we denote by $\J\tim\C\subset\C$
the image of the restriction $\J[[\C]]\rarrow\C$ of the contraaction map
$\R[[\C]]\rarrow\C$ to the subgroup $\J[[\C]]\subset\R[[\C]]$.

 When $\J$ is a closed two-sided ideal in $\R$, the subgroup $\J\tim\C$
is actually an $\R$\+subcontramodule in~$\C$.
 For any closed right ideal $\J\subset\R$ and any set $X$, one has
$$
 \J\tim(\R[[X]])=\J[[X]]\subset\R[[X]].
$$ 
 For any open right ideal $\I\subset\R$ and any left $\R$\+contramodule
$\C$, the abelian group $\C/(\I\tim\C)$ can be interpreted as
the contratensor product
$$
 \C/(\I\tim\C)=(\R/\I)\ocn_\R\C
$$
of the cyclic discrete right $\R$\+module $\R/\I$ with
the left $\R$\+contramodule~$\C$ \cite[Section~1.10]{Pproperf},
\cite[Section~2.8]{Pcoun}, \cite[Section~5]{PR}.

\Section{Jacobson Radical and Superfluous Subcontramodules}
\label{jacobson-secn}

 Let $\sA$ be a category and $\sL\subset\sA$ be a class of objects.
 A morphism $l\:L\rarrow A$ in $\sA$ is said to be
an \emph{$\sL$\+precover} (of the object~$A$) if $L\in\sL$ and for any
morphism $l'\:L'\rarrow A$ with $L'\in\sL$ there exists a morphism
$f\:L'\rarrow L$ such that $l'=lf$.
 An $\sL$\+precover $l\:L\rarrow A$ is said to be an \emph{$\sL$\+cover}
if for any endomorphism $f\:L\rarrow L$ the equation $lf=l$ implies
that $f$~is an automorphism.

 Let $\sB$ be an abelian category with enough projective objects.
 We denote the full subcategory of projective objects in $\sB$
by $\sB_\proj\subset\sB$.
 Then a morphism $p\:P\rarrow B$ in $\sB$ is a projective precover
(i.~e., a $\sB_\proj$\+precover) if and only if $P\in\sB_\proj$ and
$p$~is an epimorphism.
 A projective precover $p\:P\rarrow B$ is a projective cover if and
only if its kernel $K$ is a superfluous subobject in~$P$.
 Here a subobject $K\subset P$ of an arbitrary object $P$ in an abelian
category $\sB$ is said to be \emph{superfluous} if for any subobject
$X\subset P$ the equality $K+X=P$ implies $X=P$.

 The aim of this section is to prove the following

\begin{thm} \label{1-strictly-flat-projdim1-cover-thm}
 Let\/ $\F$ be an\/ $\infty$\+strictly flat left\/ $\R$\+contramodule
of projective dimension not exceeding\/~$1$.
 Assume that\/ $\F$ has a projective cover in\/ $\R\contra$.
 Then\/ $\F$ is a projective\/ $\R$\+contramodule.
\end{thm}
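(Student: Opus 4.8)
The plan is to combine the exact sequence coming from the projective cover with the reduction functors $(\R/\I)\ocn_\R-$ and the topological Jacobson radical $\H$ of $\R$ (whose theory this section develops). Write the projective cover as a short exact sequence $0\to K\to P\xrightarrow{p}\F\to0$ with $P$ projective and $K=\ker p$ a superfluous subcontramodule of $P$. Since $\F$ has projective dimension at most~$1$ and $P$ is projective, the first syzygy $K$ is itself projective (Schanuel's lemma, using that $\R\contra$ has enough projectives). Because $\F$ is $1$\+strictly flat, $\Ctrtor_1^\R(\R/\I,\F)=0$ for every open right ideal $\I\subset\R$, so applying $(\R/\I)\ocn_\R-$ and using $(\R/\I)\ocn_\R\C=\C/(\I\tim\C)$ yields short exact sequences $0\to K/(\I\tim K)\to P/(\I\tim P)\to\F/(\I\tim\F)\to0$. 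Writing $\ov K=K/(\I\tim K)$ and $\ov P=P/(\I\tim P)$, this says precisely that $K\cap(\I\tim P)=\I\tim K$ inside $P$, for every open $\I$.

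Next I would bring in the radical. Being superfluous in a projective object, $K$ is contained in the radical of $P$, and the theory of the topological Jacobson radical identifies this radical with $\H\tim P$; thus $K\subseteq\H\tim P$. Over the discrete ring $S=\R/\I$ the module $\ov\F=(\R/\I)\ocn_\R\F$ is flat (this is just flatness of the contramodule $\F$, restricted to $\R/\I$\+modules), so $\operatorname{Tor}_1^S(S/J(S),\ov\F)=0$ and hence the reduction $\ov K/J(S)\ov K\to\ov P/J(S)\ov P$ is again injective, i.e.\ $\ov K\cap J(S)\ov P=J(S)\ov K$. Fix $x\in K$; since $x\in\H\tim P$, its image in $\ov P$ lies in $J(S)\ov P$, while it also lies in $\ov K$, so it lies in $\ov K\cap J(S)\ov P=J(S)\ov K$. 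Using that $\H$ reduces modulo $\I$ to the Jacobson radical of $\R/\I$ on projectives, $J(S)\ov K$ is exactly the image of $\H\tim K$ in $\ov K$; therefore $x\in(\H\tim K)+(\I\tim K)$ for every open $\I$.

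The subcontramodule $\H\tim K$ is closed in $K$ (for $K$ a direct summand of a free contramodule $\R[[Y]]$ one has $\H\tim K$ a direct summand of $\H[[Y]]$, which is closed), and the subgroups $\I\tim K$ form a base of neighborhoods of zero in the complete separated contramodule $K$. Hence $\bigcap_\I\big((\H\tim K)+(\I\tim K)\big)=\H\tim K$, and the previous paragraph gives $x\in\H\tim K$. As $x\in K$ was arbitrary, $K=\H\tim K$. Finally, the topological Nakayama lemma for contramodules (another output of this section's analysis of $\H$) asserts that $\H\tim\C=\C$ forces $\C=0$; applied to $\C=K$ it gives $K=0$. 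Thus $p\:P\to\F$ is an isomorphism and $\F$ is projective.

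The main obstacle is the interface between the global radical $\H$ and the reductions modulo open ideals: one cannot apply $(\R/\H)\ocn_\R-$ directly, since $\H$ need not be open, so all exactness must be extracted one open ideal at a time and then reassembled by completeness. Making the two radical-matching statements precise --- that superfluous subcontramodules of a projective sit inside $\H\tim P$, and that $\H\tim K$ reduces modulo each $\I$ to $J(\R/\I)\ov K$ --- together with the unconditional topological Nakayama lemma, is exactly the content that the development of the topological Jacobson radical in this section must supply.
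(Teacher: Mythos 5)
Your overall skeleton coincides with the paper's: projective cover with projective superfluous kernel $K$ (Schanuel), then $K\subseteq\H\tim P$, then $K=\H\tim K$, then Nakayama for projective contramodules gives $K=0$. The first and last ingredients, which you defer to ``the section's development,'' are indeed supplied by the paper (Propositions~\ref{superfluous-subcontramodule} and~\ref{projective-contra-Nakayama}), and your open-ideal identity $K\cap(\I\tim P)=\I\tim K$ via vanishing of $\Ctrtor_1^\R(\R/\I,\F)$ is exactly the first half of Proposition~\ref{1-strictly-flat-projdim1-prop}. The genuine gap is in your middle step, the passage from these facts to $K=\H\tim K$. You work ``over the discrete ring $S=\R/\I$,'' but $\I$ is only an open \emph{right} ideal --- the standing hypothesis on $\R$ is right linearity --- so $\R/\I$ is a cyclic discrete right $\R$\+module, not a ring, and $\operatorname{Tor}^S$, $J(S)$, and ``flatness over $S$'' are meaningless. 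This is not a removable convenience: in the paper's principal examples, endomorphism rings $\R=\Hom_A(M,M)$ in the finite topology, there are typically \emph{no} proper open two-sided ideals at all. Moreover, even if one granted two-sidedness of $\I$, your radical-matching claim that $J(S)\ov K$ equals the image of $\H\tim K$ requires the inclusion $J(\R/\I)\subseteq(\H+\I)/\I$; only the opposite inclusion follows from the characterization of $\H$ as the common annihilator of the simple discrete right $\R$\+modules, and neither this paper nor the sources it quotes prove anything in the direction you need.

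The paper avoids all of this by proving, in the second half of Proposition~\ref{1-strictly-flat-projdim1-prop}, the identity $K\cap(\J\tim P)=\J\tim K$ for an arbitrary \emph{closed} right ideal $\J$ (in particular $\J=\H$), using only the open-ideal case you already have: for a projective contramodule $Q$ one has $\J\tim Q=\bigcap_{\J\subseteq\I}\I\tim Q$ (immediate on free contramodules, where $\J\tim\R[[X]]=\J[[X]]$), whence $K\cap(\H\tim P)=\bigcap_{\H\subseteq\I}\bigl(K\cap(\I\tim P)\bigr)=\bigcap_{\H\subseteq\I}\I\tim K=\H\tim K$; combined with $K\subseteq\H\tim P$ this gives $K=\H\tim K$ with no quotient rings and no radical comparison. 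If you want to keep your ``one open ideal at a time'' format, the statement your Tor argument was aiming at can be obtained legitimately: $\H+\I$ is itself an open right ideal, and $(\H+\I)\tim\C=\H\tim\C+\I\tim\C$ for any contramodule $\C$ (split a zero-convergent family in $\H+\I$ using that almost all its members lie in the open ideal $\I$), so the open-ideal identity applied to $\H+\I$ yields $K\cap\bigl(\H\tim P+\I\tim P\bigr)=\H\tim K+\I\tim K$ directly, after which your closedness-and-intersection argument goes through. Finally, note that the ``unconditional topological Nakayama lemma'' you invoke ($\H\tim\C=\C$ forces $\C=0$ for \emph{every} contramodule $\C$) is not available: the paper proves Nakayama only for projective contramodules, and such restrictions are essential --- naive contramodule Nakayama statements fail over topological rings without a countable base of neighborhoods of zero, cf.\ \cite[Remark~6.3]{Pcoun}. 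Since your $K$ is projective, this last point affects only your phrasing, not the substance.
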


 Our proof extends to the contramodule realm the argument for a discrete
ring $R$ outlined in the now-obsolete preprint~\cite[Lemma~3.2 and/or
Corollary~3.4(a)]{BPobsolete}.
 The proof is based on three technical propositions, the first of which
is formulated immediately below.

 We denote by $\H=\H(\R)\subset\R$ the topological Jacobson radical of
the ring $\R$, that is, the intersection of all the open maximal right
ideals in~$\R$ \cite[Section~3.B]{IMR}, \cite[Section~6]{Pproperf}.
 So $\H$ is a closed two-sided ideal in $\R$ \cite[Lemma~6.1]{Pproperf}.
 The Jacobson radical of the ring $\R$ viewed as an abstract
(nontopological) associative ring is denoted by $H=H(\R)\subset\R$.
 So $H$ is a two-sided ideal in $\R$, but we do not know whether it
needs to be a closed ideal.
 Obviously, one has $H(\R)\subset\H(\R)$.

\begin{prop} \label{superfluous-subcontramodule}
 Let\/ $\P$ be a projective left\/ $\R$\+contramodule and\/
$\K\subset\P$ be a superfluous subcontramodule.
 Then\/ $\K\subset\H\tim\P$.
\end{prop}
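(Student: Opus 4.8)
The plan is to prove the two containments $\K\subseteq\rad\P$ and $\rad\P=\H\tim\P$, where I write $\rad\P:=\bigcap_\M\M$ for the intersection of all maximal subcontramodules $\M\subset\P$. The first containment is the formal part: if $\M$ is a maximal subcontramodule and $\K\not\subseteq\M$, then $\K+\M=\P$ by maximality, whence $\M=\P$ because $\K$ is superfluous, a contradiction. Thus $\K\subseteq\M$ for every maximal $\M$, i.e.\ $\K\subseteq\rad\P$, and the whole statement reduces to the identification $\rad\P=\H\tim\P$.

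The inclusion $\H\tim\P\subseteq\rad\P$ I would extract from the topological Nakayama lemma for the topological Jacobson radical, $\H\tim\C=\C\Rightarrow\C=0$ \cite[Section~6]{Pproperf}. Since $\H$ is a closed two-sided ideal, $\H\tim(\P/\M)$ is a subcontramodule of the simple contramodule $\P/\M$; it cannot equal $\P/\M$ (else $\P/\M=0$ by Nakayama), so $\H\tim(\P/\M)=0$. Because the operation ${-}\tim{-}$ is compatible with the quotient $\P\twoheadrightarrow\P/\M$ (the point being that $\H[[\P]]\twoheadrightarrow\H[[\P/\M]]$ is surjective), the image of $\H\tim\P$ in $\P/\M$ is $\H\tim(\P/\M)=0$, that is $\H\tim\P\subseteq\M$; intersecting over all $\M$ gives $\H\tim\P\subseteq\rad\P$. (The same lemma shows $\H\tim\P$ is itself superfluous: if $X+\H\tim\P=\P$, then $\H\tim(\P/X)=\P/X$, so $X=\P$.)

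The crux is the reverse inclusion $\rad\P\subseteq\H\tim\P$, equivalently the assertion that $\ov\P:=\P/(\H\tim\P)$ has zero radical. Since $\P$ is a direct summand of a free contramodule $\R[[X]]$ and $\H\tim{-}$ and $\rad$ both distribute over direct summands, it suffices to treat the free case. Here I would use $\H\tim\R[[X]]=\H[[X]]=\bigcap_\fm\fm[[X]]=\bigcap_\fm\bigl(\fm\tim\R[[X]]\bigr)$, the intersection ranging over all open maximal right ideals $\fm\subset\R$; this rests on $\H=\bigcap_\fm\fm$ and the identity $\J\tim\R[[X]]=\J[[X]]$ for closed right ideals. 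Consequently any $u=\sum_x r_x x\notin\H[[X]]$ has a coefficient $r_{x_0}\notin\fm$ for a suitable $\fm$, so $u$ has nonzero image in $(\R/\fm)\ocn_\R\R[[X]]=(\R/\fm)[X]$ by~\eqref{contratensor-with-free}.

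The main obstacle will be upgrading this coefficientwise separation into an honest maximal \emph{subcontramodule} of $\R[[X]]$ avoiding $u$: the subgroup $\fm\tim\R[[X]]=\fm[[X]]$ is \emph{not} a subcontramodule when $\fm$ is not two-sided, so one cannot simply quotient by it inside $\R\contra$. Resolving this is exactly where the structure theory of the topological Jacobson radical must enter \cite[Section~6]{Pproperf}, \cite[Section~3.B]{IMR}: the separated complete quotient ring $\R/\H$ is topologically semisimple, and $\ov\P$ is a projective $\R/\H$\+contramodule (a direct summand of $(\R/\H)[[X]]=\R[[X]]/\H[[X]]$); for such contramodules the simple quotient contramodules separate the nonzero elements, which furnishes for each $u\notin\H\tim\P$ a maximal subcontramodule $\M\supseteq\H\tim\P$ with $u\notin\M$. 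This gives $\rad\P=\H\tim\P$, and combined with the first paragraph yields $\K\subseteq\H\tim\P$.
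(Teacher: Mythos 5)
Your reduction of the statement to the identity $\rad\P=\H\tim\P$ breaks down exactly at the step you yourself flag as the crux, namely the inclusion $\rad\P\subseteq\H\tim\P$ (which is the only inclusion you actually need, given your first paragraph). It rests on the assertion that $\R/\H$ is topologically semisimple. That assertion is false in general: topological semisimplicity of $\R/\H$ (together with strong closedness and topological T\+nilpotency of $\H$) is precisely the \emph{definition} of a topologically left perfect ring in~\cite{PS3}; it is a strong hypothesis which this paper carefully avoids assuming, not a theorem. Already for a discrete ring it would say that $R/J(R)$ is semisimple for every ring $R$, which fails for $R=\Z$; you are conflating ``zero topological Jacobson radical'' with ``topologically semisimple''. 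Without this input you have no mechanism for producing maximal subcontramodules of $\P/(\H\tim\P)$ separating its nonzero elements --- and note that even the \emph{existence} of maximal subcontramodules is problematic, since Zorn's lemma does not apply here (the union of a chain of subcontramodules need not be closed under the infinite summation operations, hence need not be a subcontramodule), so a priori $\rad\P$ could even equal~$\P$. What you need is the contramodule analogue of the classical identity $\rad P=J(R)P$ for projective modules; no such result is available, and establishing it is not easier than the proposition itself. Two smaller unjustified points in the same spirit: the ``topological Nakayama lemma'' $\H\tim\C=\C\Rightarrow\C=0$ for arbitrary contramodules, invoked in your second paragraph, is also false in general (it already fails for discrete rings, where it asserts Nakayama without finite generation; cf.\ the paper's reference to \cite[Remark~6.3]{Pcoun} for the failure of the open-ideal version, and note that the paper proves Nakayama only for \emph{projective} contramodules, Proposition~\ref{projective-contra-Nakayama}); and the identification $(\R/\H)[[X]]=\R[[X]]/\H[[X]]$ requires $\H$ to be strongly closed, again part of the perfectness package. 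The first of these does not hurt the main line, since $\H\tim\P\subseteq\rad\P$ is not needed for the proposition.

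For contrast, the paper's proof avoids radicals and maximal subcontramodules altogether, and this is no accident. After reducing to $\P=\R[[X]]$ by Lemma~\ref{superfluous-image}(a), it pushes the superfluous subobject $\K$ forward along each coordinate projection; by Lemma~\ref{superfluous-image}(b) each image $\L_x\subset\R$ is again superfluous, and the rank-one case (Lemma~\ref{superfluous-in-rank-1-free}) is settled by plain ring theory: if $\L_x\not\subseteq H(\R)$, there is an \emph{abstract} (not open, not closed) maximal left ideal $M$ with $\L_x+M=\R$; writing $1=l+m$ gives $\L_x+\R m=\R$ with $\R m\varsubsetneq\R$, and $\R m$ \emph{is} a subcontramodule by Lemma~\ref{cyclic-subcontramodule} --- contradicting superfluousness. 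This yields $\L_x\subset H\subset\H$, hence $\K\subset\H[[X]]$. The point is that superfluousness can be contradicted by a single cyclic subcontramodule built from an abstract maximal left ideal, so one never needs maximal subcontramodules, nor any structure theory of $\R/\H$.
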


 The proof of Proposition~\ref{superfluous-subcontramodule} consists
of three lemmas.

\begin{lem} \label{superfluous-image}
 Let\/ $\sB$ be an abelian category, $f\:P\rarrow Q$ be a morphism in\/
$\sB$, and $K\subset P$ be a superfluous subobject.
 Then the image $L=f(K)$ of the subobject $K$ under the morphism~$f$
is a superfluous subobject in~$Q$.
 In particular, \par
\textup{(a)} if $P$, $Q\in\sB$ are two objects and $K\subset P$ is
a superfluous subobject, then $K\oplus0$ is a superfluous subobject
in $F=P\oplus Q$, \par
\textup{(b)} if $P$, $Q\in\sB$ are two objects, $K\subset F=P\oplus Q$
is a superfluous subobject, and $f\:F\rarrow Q$ is the direct summand
projection, then $f(K)$ is a superfluous subobject in~$Q$.
\end{lem}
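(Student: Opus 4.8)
The plan is to establish the general assertion first and then read off parts~(a) and~(b) as immediate special cases, with a suitable choice of the morphism~$f$. Since $\sB$ is an arbitrary abelian category, I cannot chase elements; instead I will work entirely in the modular lattice of subobjects of a fixed object, using the image operation $f({-})$ and the preimage (pullback) operation $f^{-1}({-})$ together with the two standard identities $f(f^{-1}(Y))=Y\cap\im f$ and $f^{-1}(f(Z))=Z+\ker f$, both of which are valid in any abelian category. I will also use that $f({-})$ turns sums of subobjects into sums, i.e.\ $f(A+B)=f(A)+f(B)$.

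To prove that $L=f(K)$ is superfluous in~$Q$, I would begin with an arbitrary subobject $Y\subset Q$ satisfying $L+Y=Q$ and aim to deduce $Y=Q$. The decisive step is to test the superfluousness of $K$ against the subobject $X=f^{-1}(Y)\subset P$. First I would compute $f(K+X)=f(K)+f(f^{-1}(Y))=L+(Y\cap\im f)$; then, since $L\subset\im f$, the modular law lets me rewrite this as $(L+Y)\cap\im f=Q\cap\im f=\im f$. Because $0\subset Y$, the subobject $X=f^{-1}(Y)$ automatically contains $\ker f$, hence so does $K+X$; and any subobject $Z\supset\ker f$ with $f(Z)=\im f$ must equal $P$, as $Z=f^{-1}(f(Z))=f^{-1}(\im f)=P$. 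Therefore $K+X=P$, and superfluousness of $K$ forces $X=f^{-1}(Y)=P$, that is, $\im f\subset Y$. Since $L=f(K)\subset\im f\subset Y$, the hypothesis $L+Y=Q$ now collapses to $Y=Q$, which is what I want.

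With the general statement available, part~(a) follows by applying it to the direct summand inclusion $P\rarrow P\oplus Q$, whose image carries $K$ precisely to $K\oplus0$; and part~(b) follows by applying it to the direct summand projection $f\colon P\oplus Q\rarrow Q$. No further work is needed for either.

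The main obstacle I anticipate is not the argument itself, which is a short formal computation, but rather the bookkeeping in the abstract categorical setting: one must be sure that the Galois-connection identities for image and preimage and the modular law for subobjects genuinely hold in a general abelian category rather than merely in module categories, and one must hit upon the correct test subobject $X=f^{-1}(Y)$. Once these are in place, the proof is essentially immediate.
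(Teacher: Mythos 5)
Your proof is correct and follows essentially the same route as the paper: test superfluousness of $K$ against the pullback $X=f^{-1}(Y)$, conclude $K+X=P$, hence $X=P$, hence $\im f\subset Y$ and $Y=Q$, with (a) and (b) read off as special cases. The only difference is that you carefully justify the key identity $K+f^{-1}(Y)=P$ via the modular law and the image/preimage adjunction identities, a step the paper's proof simply asserts.
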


\begin{proof}
 Let $Y\subset Q$ be a subobject such that $L+Y=Q$.
 Then $X=f^{-1}(Y)\subset P$ is a subobject such that $K+X=P$.
 Hence $X=P$; so $f(P)\subset Y$.
 It follows that $L=f(K)\subset f(P)\subset Y$ and therefore $Y=Q$.
\end{proof}

\begin{lem} \label{cyclic-subcontramodule}
 Let\/ $\C$ be a left\/ $\R$\+contramodule and $c\in\C$ be
an element.
 Then the cyclic\/ $\R$\+submodule\/ $\R c\subset\C$ is
an $\R$\+subcontramodule in\/~$\C$.
\end{lem}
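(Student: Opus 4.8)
The plan is to realize the cyclic submodule $\R c$ as the set-theoretic image of a morphism out of the free contramodule on one generator, and then to argue that such an image is automatically closed under the contraaction. First I would invoke the universal property of $\R[[*]]=\R$ recalled in the preliminaries: since $\Hom^\R(\R[[*]],\C)$ is naturally identified with the set of maps $*\to\C$, that is, with $\C$ itself, the element $c\in\C$ corresponds to a unique $\R$\+contramodule morphism $\theta\:\R[[*]]\rarrow\C$ with $\theta(*)=c$. Composing with the exact (faithful) forgetful functor $\R\contra\rarrow\R\modl$, the map $\theta$ is in particular a morphism of left $\R$\+modules; as the underlying module of $\R[[*]]$ is the free $\R$\+module on the generator $*=1$, this gives $\theta(r)=\theta(r\cdot1)=r\theta(1)=rc$ for every $r\in\R$. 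Hence the set-theoretic image of $\theta$ is exactly $\{rc : r\in\R\}=\R c$.

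It then remains to show that the image of a contramodule morphism is closed under contraaction, for which I would use the defining intertwining identity $\theta\circ\phi_*=\pi_\C\circ\R[[\theta]]$ for a morphism of modules over the monad $\R[[{-}]]$ (here $\phi_*=\pi_{\R[[*]]}$). Given an arbitrary zero-convergent family $\sum_{d\in\R c}a_d\,d\in\R[[\R c]]$, the goal is to lift it to an element $w\in\R[[\R[[*]]]]$ with $\R[[\theta]](w)=\sum_d a_d\,d$; the intertwining identity would then yield $\pi_\C\bigl(\sum_d a_d\,d\bigr)=\theta\bigl(\phi_*(w)\bigr)\in\R c$, since $\phi_*(w)\in\R[[*]]=\R$, which is precisely the required closure. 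To build $w$, I would choose for each $d\in\R c$ a single element $r_d\in\R$ with $r_d c=d$, so that the assignment $d\mapsto r_d$ is injective, and set $w=\sum_d a_d\,r_d$, viewing each $r_d$ as an element of the underlying set of $\R[[*]]$; by injectivity and $\theta(r_d)=d$ one checks $\R[[\theta]](w)=\sum_d a_d\,d$.

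The step requiring care — and the real content of the lemma — is verifying that this $w$ is itself a zero-convergent family, so that it genuinely lies in $\R[[\R[[*]]]]$. This is exactly the assertion that the endofunctor $\R[[{-}]]$ sends the surjection $\R\twoheadrightarrow\R c$, $r\mapsto rc$, to a surjection. Because $d\mapsto r_d$ was chosen injective, the support of $w$ is in bijection with the support of $(a_d)$, and for every open right ideal $\I\subset\R$ the set $\{d : a_d\notin\I\}$ is finite, whence so is the corresponding index set for $w$; thus $w\in\R[[\R[[*]]]]$. With $w$ in hand the intertwining identity closes the argument, showing that $\R c$ is an $\R$\+subcontramodule of $\C$. (Alternatively, one could package the last two paragraphs as the general fact that the image of any $\R$\+contramodule morphism is a subcontramodule, the key input again being that $\R[[{-}]]$ preserves surjections of sets.)
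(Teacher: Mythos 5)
Your proof is correct and follows essentially the same route as the paper: both realize $\R c$ as the set-theoretic image of the contramodule morphism $\R[[*]]\rarrow\C$ corresponding to~$c$ under the free-object adjunction, and then conclude that this image is a subcontramodule. The only difference is one of detail: the paper simply invokes the general fact that the image of a contramodule morphism is a subcontramodule (immediate from $\R\contra$ being abelian with an exact, faithful forgetful functor), whereas you verify the closure under contraaction by hand, lifting a zero-convergent family along a section of $r\mapsto rc$ and using that $\R[[{-}]]$ preserves surjections of sets --- a correct, if more laborious, justification of the same step.
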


\begin{proof}
 Let $\R=\R[[*]]$ be the free left\/ $\R$\+contramodule with one
generator.
 Then the $\R$\+contramodule morphisms $\R\rarrow\C$ correspond
bijectively to the elements of~$\C$.
 In other words, the map $\R\overset c\rarrow\C$ taking every element
$r\in\R$ to the element $rc\in\C$ is a left $\R$\+contramodule
morphism (see~\cite[Section~6.2]{PS1} or~\cite[Section~1.7]{Pproperf}).
 Now the cyclic submodule $\R c\subset\C$ is the image of this
contramodule morphism, hence it is a subcontramodule.
\end{proof}

 More generally, any finitely generated $\R$\+submodule of
an $\R$\+contramodule is a subcontramodule.

\begin{lem} \label{superfluous-in-rank-1-free}
 Consider the free left\/ $\R$\+contramodule with one generator\/
$\R[[*]]=\R$.
 Let\/ $\L\subset\R$ be a superfluous left\/ $\R$\+subcontramodule.
 Then\/ $\L\subset H(\R)$.
\end{lem}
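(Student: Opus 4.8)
The plan is to verify, for each fixed $\ell\in\L$, the standard characterization of the (abstract) Jacobson radical: an element $y$ of an associative ring lies in $\rad$ of that ring if and only if $1-ry$ is left-invertible for every~$r$. Since $H(\R)$ is by definition the Jacobson radical of $\R$ regarded as an abstract ring, i.e.\ the intersection of all maximal left ideals, it suffices to fix $\ell\in\L$ together with an arbitrary $r\in\R$ and to produce $s\in\R$ with $s(1-r\ell)=1$. Granting this for all $r$, one concludes $\ell\in\rad(\R)=H(\R)$, and hence $\L\subset H(\R)$ as $\ell$ ranges over~$\L$. (The direction of the characterization I actually need is elementary: if $\ell\notin\rad(\R)$, then $\ell$ lies outside some maximal left ideal $\fm$, so $\R\ell+\fm=\R$, whence $1-r\ell\in\fm$ for a suitable~$r$ and this element cannot be left-invertible.)

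To set up the argument, I would first record two observations. Since $\L$ is an $\R$\+subcontramodule of $\R=\R[[*]]$, it is in particular a left $\R$\+submodule and therefore closed under left multiplication, so $\R\ell\subseteq\L$. Second, applying Lemma~\ref{cyclic-subcontramodule} to the element $1-r\ell\in\R$, the cyclic submodule $\R(1-r\ell)$ is itself an $\R$\+subcontramodule of~$\R$; this is the crucial point that makes the superfluousness hypothesis applicable, because superfluousness quantifies over \emph{subcontramodules}, not arbitrary submodules.

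The heart of the proof is the trivial identity $1=(1-r\ell)+r\ell$. It displays $1$ as an element of the submodule $\R(1-r\ell)+\R\ell\subseteq\R(1-r\ell)+\L$. A left submodule of $\R$ containing $1$ is all of $\R$; and since the forgetful functor $\R\contra\rarrow\R\modl$ is exact, the sum of two subcontramodules, computed as subobjects of $\R$ in $\R\contra$, has underlying submodule equal to the sum of the underlying submodules. Hence $\L+\R(1-r\ell)=\R$ already in $\R\contra$. Invoking the hypothesis that $\L$ is superfluous, with $X=\R(1-r\ell)$ a subcontramodule satisfying $\L+X=\R$, superfluousness forces $X=\R$, that is, $\R(1-r\ell)=\R$. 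Therefore $1\in\R(1-r\ell)$, giving the desired left inverse~$s$.

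I do not expect a serious conceptual obstacle here; the delicacy is entirely in the bookkeeping. The two points I would be most careful about are: confirming via Lemma~\ref{cyclic-subcontramodule} that $\R(1-r\ell)$ genuinely is a subcontramodule, so that the superfluousness condition applies to it; and checking that the categorical sum $\L+X$ in $\R\contra$ coincides with the naive submodule sum, so that "$1$ lies in the sum'' legitimately yields $\L+X=\R$ as subcontramodules. Finally, the left/right conventions must be tracked: because we work with left contramodules and hence with left ideals, the relevant radical criterion is the left-invertibility of $1-r\ell$, and the two-sided symmetry of the Jacobson radical identifies the resulting ideal with $H(\R)$.
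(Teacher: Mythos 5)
Your proof is correct and is essentially the paper's own argument in different packaging: both rest on Lemma~\ref{cyclic-subcontramodule}, which makes a cyclic left ideal a subcontramodule so that the superfluousness hypothesis can be applied to a decomposition of $1$ as an element of $\L$ plus a generator of that cyclic ideal. The paper runs this by contradiction---choosing a maximal left ideal $M$ with $\L\not\subset M$, writing $1=l+m$, and noting that $\L+\R m=\R$ while $\R m\varsubsetneq\R$---whereas you run it directly through the equivalent left-invertibility characterization of the Jacobson radical ($\ell\in H(\R)$ if and only if $1-r\ell$ is left-invertible for all~$r$), applying superfluousness positively to conclude $\R(1-r\ell)=\R$; the key lemma and the mechanism are identical, and your extra care about sums of subcontramodules versus sums of underlying submodules (via exactness of the forgetful functor) is a point the paper leaves implicit.
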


\begin{proof}
 Notice first of all that $\L$ is a left $\R$\+submodule (i.~e.,
a left ideal) in~$\R$.
 Suppose $\L$ is not contained in $H(\R)$.
 Then there exists a (not necessarily closed) maximal left ideal
$M\subset\R$ such that $\L$ is not contained in~$M$, and
consequently $\L+M=\R$.
 Let $l\in\L$ and $m\in M$ be a pair of elements such that $l+m=1$.
 Then we have $\L+\R m$=$\R$ and $\R m\varsubsetneq\R$.
 By Lemma~\ref{cyclic-subcontramodule}, the principal left ideal
$\R m$ is a left $\R$\+subcontramodule in~$\R$.
 (Notice that there is \emph{no} claim about $\R m$ being
a \emph{closed} left ideal in~$\R$ here.)
 The contradiction with the superfluousness assumption proves that
$\L\subset H(\R)$.
\end{proof}

\begin{proof}[Proof of Proposition~\ref{superfluous-subcontramodule}]
 By Lemma~\ref{superfluous-image}(a), we can assume that $\P$ is
a free left $\R$\+con\-tramodule, $\P=\R[[X]]$.
 For every element $x\in X$, consider the coordinate projection
$f_x\:\P\hookrightarrow\R^X\rarrow\R$ corresponding to~$x$ and put
$\L_x=f_x(\K)$.
 By Lemma~\ref{superfluous-image}(b), \,$\L_x$ is a superfluous
left $\R$\+subcontramodule in~$\R$.
 According to Lemma~\ref{superfluous-in-rank-1-free}, we have
$\L_x\subset H\subset\H$.
 It follows that $\K\subset\H[[X]]=\H\tim\R[[X]]$.
\end{proof}

 In fact, we have shown that under the assumptions of
Proposition~\ref{superfluous-subcontramodule} one has
$\K\subset\overline H\tim\P$, where $\overline H\subset\R$ is
the topological closure of the abstract Jacobson radical $H\subset\R$
(but we will not use this fact).

 The second main technical ingredient is the next
 
\begin{prop} \label{1-strictly-flat-projdim1-prop}
 Let $0\rarrow\K\rarrow\P\rarrow\F\rarrow0$ be a short exact sequence of
left\/ $\R$\+contramodules, where\/ $\K$ and\/ $\P$ are projective\/
$\R$\+contramodules and\/ $\F$ is a\/ $1$\+strictly flat\/
$\R$\+contramodule.
 Let\/ $\J\subset\R$ be a closed right ideal.
 Then one has
$$
 \K\cap(\J\tim\P)=\J\tim\K.
$$ 
\end{prop}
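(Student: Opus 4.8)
The plan is to prove the two inclusions separately, treating $\J\tim\K\subseteq\K\cap(\J\tim\P)$ as a formality and concentrating the work on the reverse inclusion $\K\cap(\J\tim\P)\subseteq\J\tim\K$. For the easy direction, $\J\tim\K\subseteq\K$ holds by construction, while the inclusion $\K\hookrightarrow\P$ carries $\J[[\K]]$ into $\J[[\P]]$ compatibly with the contraactions, so $\J\tim\K\subseteq\J\tim\P$ as well.

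The heart of the argument is the special case of an \emph{open} right ideal $\I\subseteq\R$, where I would apply the functor $(\R/\I)\ocn_\R{-}=\Ctrtor_0^\R(\R/\I,{-})$ to the short exact sequence $0\rarrow\K\rarrow\P\rarrow\F\rarrow0$. The resulting long exact sequence of $\Ctrtor$ reads
$$
 \Ctrtor_1^\R(\R/\I,\P)\rarrow\Ctrtor_1^\R(\R/\I,\F)\rarrow
 (\R/\I)\ocn_\R\K\rarrow(\R/\I)\ocn_\R\P.
$$
Here the first term vanishes because $\P$ is projective and the second vanishes because $\F$ is $1$\+strictly flat; hence the natural map $\K/(\I\tim\K)\rarrow\P/(\I\tim\P)$ induced by the inclusion $\K\hookrightarrow\P$ is a monomorphism. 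Unwinding what injectivity of this map means, an element $k\in\K$ lying in $\I\tim\P$ must already lie in $\I\tim\K$; that is, $\K\cap(\I\tim\P)=\I\tim\K$. This settles the proposition for open ideals, and is the one place where the flatness hypothesis on $\F$ is used.

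To pass from open ideals to an arbitrary closed right ideal $\J$, I would use that $\J$, being closed in the separated ring $\R$, equals the intersection $\bigcap_{\I}\I$ of all open right ideals $\I\supseteq\J$ (for $r\notin\J$ one finds an open right ideal $\I_0$ with $r\notin\J+\I_0$, and $\J+\I_0$ is an open right ideal separating $r$ from $\J$). Given $k\in\K\cap(\J\tim\P)$, the inclusion $\J[[\P]]\subseteq\I[[\P]]$ gives $\J\tim\P\subseteq\I\tim\P$, so $k\in\K\cap(\I\tim\P)=\I\tim\K$ for every such $\I$ by the open case; thus $k\in\bigcap_{\I}(\I\tim\K)$.

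It remains to identify this intersection with $\J\tim\K$, and this is where projectivity of $\K$ enters. Writing $\K$ as a direct summand of a free contramodule $\G=\R[[Y]]=\K\oplus\K'$ and using functoriality of ${-}\tim{-}$ applied to the projection $\G\rarrow\K$, I would first check the decomposition $\J'\tim\G=(\J'\tim\K)\oplus(\J'\tim\K')$ for every closed right ideal $\J'$, whence $\J'\tim\K=(\J'\tim\G)\cap\K$ inside $\G$. Since $\J'\tim\G=\J'[[Y]]$ for the free contramodule, a coordinatewise computation gives $\bigcap_{\I}(\I\tim\G)=\bigcap_{\I}\I[[Y]]=\J[[Y]]=\J\tim\G$, and intersecting with $\K$ yields $\bigcap_{\I}(\I\tim\K)=\J\tim\K$. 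Combined with the previous paragraph, this gives $k\in\J\tim\K$ and completes the reverse inclusion. The main obstacle is the open-ideal case: once the $\Ctrtor_1$ vanishing is set up correctly, the reduction of closed ideals to open ones is a routine intersection manipulation, the only care being the direct-summand bookkeeping needed to treat a projective (rather than free)~$\K$.
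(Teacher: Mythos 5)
Your proposal is correct and takes essentially the same route as the paper: the open-ideal case via vanishing of $\Ctrtor_1^\R(\R/\I,\F)$ in the long exact sequence of $\Ctrtor$ (giving injectivity of $\K/(\I\tim\K)\rarrow\P/(\I\tim\P)$), followed by writing the closed ideal $\J$ as the intersection of the open right ideals containing it and using the direct-summand-of-free argument for projective contramodules to identify $\bigcap_{\I}(\I\tim\K)$ with $\J\tim\K$. The only immaterial difference is that you split the claim into two inclusions, so you need the intersection formula only for $\K$, whereas the paper's single chain of equalities invokes it for both $\P$ and $\K$.
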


\begin{proof}
 Let us first consider the case of an open right ideal $\I\subset\R$.
 Then the equation $\K\cap(\I\tim\P)=\I\tim\K$ is equivalent to
exactness of the short sequence
\begin{equation} \label{reduction-sequence}
 0\lrarrow\K/(\I\tim\K)\lrarrow\P/(\I\tim\P)\lrarrow\F/(\I\tim\F)
 \lrarrow0.
\end{equation}
 For any left $\R$\+contramodule $\C$, we have $\C/(\I\tim\C)=
(\R/\I)\ocn_\R\C$.
 In view of the homological long exact sequence of the derived
functor $\Ctrtor$,
$$
 \dotsb\lrarrow\Ctrtor^\R_1(\N,\F)\lrarrow\N\ocn_\R\K
 \lrarrow\N\ocn_\R\P\lrarrow\N\ocn_\R\F\lrarrow0,
$$
which is defined for any discrete right $\R$\+module $\N$ and in
particular for $\N=\R/\I$, it follows that the short
sequence~\eqref{reduction-sequence} is exact.

 In the general case of a closed right ideal $\J\subset\R$, we have
$$
 \J=\bigcap\nolimits_{\J\subset\I}\I,
$$ 
where the intersection is taken over all the open right ideals $\I$
in $\R$ containing~$\J$.
 It follows that, for any set~$X$,
$$
 \J[[X]]=\bigcap\nolimits_{\J\subset\I}\I[[X]]\,\subset\,\R[[X]].
$$
 In other words, for any free left $\R$\+contramodule $\G=\R[[X]]$
we have
$$
 \J\tim\G=\J[[X]]\,=\,\bigcap\nolimits_{\J\subset\I}\I[[X]]=
 \bigcap\nolimits_{\J\subset\I}\I\tim\G.
$$
 Since any projective left $\R$\+contramodule $\Q$ is a direct summand
of a free one, we obtain the equality
$$
 \J\tim\Q=\bigcap\nolimits_{\J\subset\I}\I\tim\Q\,\subset\,\Q.
$$
 In particular, this holds for $\Q=\P$ and $\Q=\K$.
 Finally, we can compute
$$
 \K\cap(\J\tim\P)=\K\cap\bigcap\nolimits_{\J\subset\I}\I\tim\P=
 \bigcap\nolimits_{\J\subset\I}\K\cap(\I\tim\P)=
 \bigcap\nolimits_{\J\subset\I}\I\tim\K=\J\tim\K.
$$
\end{proof}

 Our third main technical result in this section is the following
version of Nakayama lemma for projective contramodules.
 It is the contramodule generalization of
the classical~\cite[Proposition~2.7]{Bas}.

\begin{prop} \label{projective-contra-Nakayama}
 Let\/ $\P$ be a nonzero projective left\/ $\R$\+contramodule.
 Then
$$
 \H\tim\P\varsubsetneq\P.
$$
\end{prop}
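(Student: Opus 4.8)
The plan is to argue by contradiction: I assume that a nonzero projective $\P$ satisfies $\H\tim\P=\P$ and derive $\P=0$. First I would realize $\P$ as a direct summand of a free contramodule, $\R[[X]]=\P\oplus\Q$, and encode the summand projection as an idempotent endomorphism $e\in\End_\R(\R[[X]])$ with $\im e=\P$. Since morphisms $\R[[X]]\rarrow\C$ correspond to set maps $X\rarrow\C$, the endomorphism $e$ is recorded by the values $e(x)=\sum_{y\in X}e_{yx}y$, i.e.\ by a ``matrix'' $(e_{yx})$ whose columns are zero-convergent families, and idempotency $e^2=e$ reads $\sum_z e_{yz}e_{zx}=e_{yx}$. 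The operation $\H\tim(-)$ is functorial in the contramodule argument, so applying it to the inclusion $\P\hookrightarrow\R[[X]]$ together with the identity $\H\tim\R[[X]]=\H[[X]]$ (the formula $\J\tim\R[[X]]=\J[[X]]$ for the closed two-sided ideal $\J=\H$) gives $\H\tim\P\subseteq\H[[X]]$. Under the hypothesis $\H\tim\P=\P$ this yields $\P\subseteq\H[[X]]$; since each $e(x)$ lies in $\im e=\P\subseteq\H[[X]]$, reading off coordinates shows that every entry $e_{yx}$ belongs to~$\H$.

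Next I would examine $\id-e$. By the previous step its entries are congruent to those of the identity modulo $\H$: the diagonal entries have the form $1-e_{xx}$ with $e_{xx}\in\H$, and the off-diagonal entries lie in $\H$; equivalently, for every open right ideal $\I\supseteq\H$ the reduction $(\R/\I)\ocn_\R(\id-e)$ is the identity endomorphism of $(\R/\I)[X]$. The crucial input is then a \emph{topological Nakayama lemma} for the complete ring $\R$: an endomorphism of a free $\R$\+contramodule whose matrix is congruent to the identity modulo the topological Jacobson radical $\H$ is an automorphism. (In the rank-one case this is precisely the invertibility of $1-h$ in $\R$ for $h\in\H$.) Granting this, $\id-e$ is an automorphism; but $(\id-e)\circ e=e-e^2=0$, so injectivity of $\id-e$ forces $e=0$, i.e.\ $\P=\im e=0$, the desired contradiction. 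This is exactly the contramodule analogue of Bass' Proposition~2.7, with the topological radical $\H$ playing the role of the ordinary Jacobson radical.

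I expect the genuine obstacle to be precisely this invertibility-modulo-$\H$ step, and every alternative reduction I tried collapses back to it. Notably, $\id-e$ cannot be inverted by a naive Neumann series $\sum_n e^n$, since the idempotent $e$ satisfies $e^n=e$ and such a series diverges unless $e=0$; moreover the corresponding module-theoretic statement (a projective module $P$ with $\operatorname{rad}(R)P=P$ being zero) is subtle and uses no topology, whereas here the conclusion is genuinely not formal and must exploit the completeness and separatedness of $\R$ through the description $\H=\bigcap_{\I}\I$ as the intersection of the open maximal right ideals. I would therefore isolate the invertibility assertion as a separate lemma on the topological Jacobson radical, in the spirit of the radical theory cited above, and expect its proof to rest on completeness (via the rank-one invertibility of $1-h$ together with a suitable assembly over the open right ideals of $\R$); once that lemma is in hand, the three-line conclusion of the second paragraph finishes the proof.
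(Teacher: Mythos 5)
Your reduction is the same as the paper's: realize $\P$ as the image of an idempotent endomorphism $e$ of a free contramodule $\R[[X]]$, use $\H\tim\R[[X]]=\H[[X]]$ to see that the matrix entries of $e$ lie in $\H$, and then kill $e$ via the relation $(\id-e)\circ e=0$ together with injectivity of $\id-e$. The genuine gap is the lemma to which you defer all the difficulty. You formulate it as an \emph{invertibility} statement ($\id-e$ is an automorphism) and propose to prove it starting from ``the rank-one invertibility of $1-h$ in $\R$ for $h\in\H$''. That rank-one statement is not available: since $\H$ is a two-sided ideal, invertibility of $1-h$ for all $h\in\H$ is equivalent to the inclusion $\H(\R)\subseteq H(\R)$ of the topological Jacobson radical into the abstract one, and this is precisely what is unknown --- the paper records only the trivial inclusion $H\subseteq\H$ and explicitly states that it is not known whether $H$ is even closed (if $\H\subseteq H$ held, then $H=\H$ would be closed). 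The classical quasi-regularity argument breaks down here because for $h\in\H$ the right ideal $(1-h)\R$ is dense in $\R$ (this is what \cite[Lemma~6.2]{Pproperf} yields), so if it were proper it would be contained in \emph{no open} maximal right ideal, and membership of $h$ in $\H$ says nothing about non-open maximal right ideals; and, as you note yourself, the Neumann series fails because $h^n\not\to0$ in general. So the route you sketch toward your key lemma is blocked at its very first step.

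What your final step actually uses is only \emph{injectivity} of $\id-e$, and that weaker statement is exactly what the paper proves --- but this requires two ingredients absent from your proposal. First, Lemma~\ref{Jacobson-additionally-characterized}: for any element~$h$ of the topological Jacobson radical of a complete separated right linear topological ring, right multiplication by $1-h$ acts injectively on that ring; its proof goes through the characterization of the radical by annihilation of simple discrete right modules and uses separatedness in an essential way. Second, to transfer this to the endomorphism $\id-e$ of $\R[[X]]$, the paper works in the topological matrix ring $\Mat_X(\R)$ of row-zero-convergent matrices and proves $\H(\Mat_X(\R))=\Mat_X(\H(\R))$ (Lemma~\ref{matrix-Jacobson}, via the equivalence $\discr\R\simeq\discr\Mat_X(\R)$ of discrete module categories); an element $q\in\P$, which satisfies $q(1-e)=0$, is then embedded as a one-row matrix $Q$ with $Q(1-A)=0$, and injectivity of right multiplication by $1-A$ in $\Mat_X(\R)$ gives $Q=0$, hence $q=0$. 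So your skeleton is correct, but the actual content of the proposition --- why ``matrix entries in $\H$'' forces $\id-e$ to be injective --- is missing, and it cannot be supplied by the invertibility assertion you rely on; the repair is to weaken that assertion to injectivity and prove the two lemmas above.
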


 The proof of Proposition~\ref{projective-contra-Nakayama} is based on
two lemmas.
 The first of them expands the list of equivalent conditions
characterizing the topological Jacobson radical $\H$ of a topological
ring $\R$ given in~\cite[Lemma~6.2]{Pproperf}.

\begin{lem} \label{Jacobson-additionally-characterized}
 Given an element $h\in\R$, the following three conditions are
equivalent: \par
\textup{(a)} $h\in\H$; \par
\textup{(b)} for every open right ideal\/ $\I\subset\R$ and every
element $r\in\R$, the right multiplication by\/ $1-hr$ acts injectively
in\/ $\R/\I$; \par
\textup{(c)} for every open right ideal\/ $\I\subset\R$ and every
element $r\in\R$, the right multiplication by\/ $1-rh$ acts injectively
in\/ $\R/\I$. \par
 In particular, for every element $h\in\H$, the right multiplication
with\/ $1-h$ acts injectively in\/~$\R$.
\end{lem}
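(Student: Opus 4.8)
The plan is to establish the two substantive implications in each of the equivalences $(a)\Leftrightarrow(b)$ and $(a)\Leftrightarrow(c)$ by a direct argument grounded in the definition of $\H$ as the intersection of the open maximal right ideals of $\R$, and then to read off the concluding assertion by taking $r=1$.

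For the forward implication $(a)\Rightarrow(b)$, I would fix an open right ideal $\I$, an element $r\in\R$, and an $x\in\R$ with $x(1-hr)\in\I$, and show that $\bar x=0$ in $\R/\I$. Supposing $\bar x\neq0$, the right ideal $\I'=\{s\in\R\mid xs\in\I\}$ is proper, and it is open because $\R/\I$ is a discrete right $\R$-module. Since any right ideal containing an open one is itself open (being a union of cosets of $\I'$), a maximal right ideal $M\supseteq\I'$ furnished by Zorn's lemma is an open maximal right ideal, so that $h\in\H\subseteq M$. The hypothesis then reads $1-hr\in\I'\subseteq M$, while $hr\in M$ because $M$ is a right ideal; hence $1\in M$, a contradiction. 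I would run $(a)\Rightarrow(c)$ in precisely the same way, the only change being that I now need $rh\in M$, which is where the two-sidedness of $\H$ enters, giving $rh\in\H\subseteq M$.

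For the converse $(b)\Rightarrow(a)$ I would argue contrapositively. If $h\notin\H$, I would pick an open maximal right ideal $M$ with $h\notin M$ and set $S=\R/M$, a simple discrete right $\R$-module; then $\bar1h\neq0$ generates $S$, so some $r$ satisfies $\bar1hr=\bar1$, and right multiplication by $1-hr$ kills the nonzero $\bar1\in S$ and so fails to be injective on $S=\R/M$. For $(c)\Rightarrow(a)$ the same $M$ and $S$ work: writing $w=\bar1h\neq0$, I would solve $wr=\bar1$ using $w\R=S$, whereupon $w(rh)=(wr)h=\bar1h=w$ exhibits $1-rh$ as killing the nonzero $w$. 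Finally, the concluding assertion follows from $(b)$ with $r=1$: if $x(1-h)=0$ then $x\in\I$ for every open right ideal $\I$, and separatedness of $\R$ gives $x=0$.

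The step I expect to be the main obstacle is the forward implication $(a)\Rightarrow(c)$: a right ideal $M$ absorbs $hr$ on the right, which settles $(a)\Rightarrow(b)$ immediately, but it has no reason to absorb $rh$ on the left. This asymmetry is exactly what the two-sidedness of $\H$ is there to repair, through $rh\in\H\subseteq M$; once that is in hand, the converse implications pose no comparable difficulty, since a nonzero element of a simple module always generates it.
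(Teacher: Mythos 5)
Your proof is correct, but it takes a more self-contained route than the paper's. The paper's proof of (a)$\Rightarrow$(b),(c) first reduces, via the two-sidedness of $\H$, to showing that $1-h$ acts injectively in each $\R/\I$, and then invokes the quasi-invertibility characterization of the topological Jacobson radical from \cite[Lemma~6.2(iii)--(iv)]{Pproperf} (existence of $t$ with $(1-h)t+\J=1+\J$ for every open right ideal $\J$), combined with a discreteness trick (choosing $\J$ with $s\J\subset\I$). You instead argue directly from the definition of $\H$ as the intersection of the open maximal right ideals: you pass to the transporter ideal $\I'=\{s\mid xs\in\I\}$, observe it is open (preimage of $\I$ under the continuous left multiplication by~$x$) and proper, enlarge it by Zorn's lemma to a maximal right ideal $M$, note that $M$ is automatically open (any right ideal containing an open one is a union of its cosets), and derive $1\in M$ --- with the two-sidedness of $\H$ entering exactly where you say it must, to get $rh\in\H\subseteq M$ in case~(c). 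For the converse, the paper cites \cite[Lemma~6.2(ii)]{Pproperf} to produce a simple discrete right module $\cS$ and nonzero $x,y\in\cS$ with $xh=y$; your argument is essentially that citation unfolded, constructing $\cS=\R/M$ explicitly from an open maximal right ideal $M$ not containing~$h$ and taking $x=\bar1$, $y=\bar1h$. The concluding assertion is handled identically in both proofs (take $r=1$ and use separatedness). What your approach buys is independence from the external Lemma~6.2 of~\cite{Pproperf} (only the two-sidedness of $\H$, i.e.\ \cite[Lemma~6.1]{Pproperf}, is still needed); what the paper's approach buys is brevity, given that those characterizations are already available.
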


\begin{proof}
 (a) $\Longrightarrow$ (b) and~(c): since $\H$ is a two-sided ideal
in $\R$ by~\cite[Lemma~6.1]{Pproperf}, it suffices to show that $1-h$
acts injectively in $\R/\I$ for every $h\in\H$ and any open
right ideal~$\I$.
 Let $0\ne s+\I\in\R/\I$.
 Then there exists an open right ideal $\J\subset\R$ such that
$s\J\subset\I$.
 By~\cite[Lemma~6.2(iii) or~(iv)]{Pproperf}, there is an element
$t\in\R$ such that $(1-h)t+\J=1+\J$.
 Multiplying the latter equation by~$s$ on the left, we get
$s(1-h)t+s\J=s+s\J$, hence $s(1-h)t+\I=s+\I\ne0$ in $\R/\I$.
 It follows that $s(1-h)+\I\ne0$ in $\R/\I$.
 
 (b) or (c) $\Longrightarrow$ (a): by~\cite[Lemma~6.2(ii)]{Pproperf},
for any $h\notin\H$ there exists a simple discrete right $\R$\+module
$\cS$ and a pair of nonzero elements $x$, $y\in\cS$ such that $xh=y$.
 Since $\cS$ is simple, there is also an element $r\in\R$ such that
$yr=x$.
 Thus $x(1-hr)=0=y(1-rh)$ in $\cS$, and we have shown that neither
$1-hr$ nor $1-rh$ act injectively in the cyclic discrete right
$\R$\+module~$\cS$.

 To prove the last assertion of the lemma, suppose that we have
$s(1-h)=0$ in $\R$ for some elements $s\in\R$ and $h\in\H$.
 Let $\I\subset\R$ be an open right ideal.
 Then we have $s(1-h)+\I=0$ in $\R/\I$, which implies $s+\I=0$ in
$\R/\I$ by (b) or~(c).
 Hence $s\in\I$.
 As this holds for every open right ideal $\I$ and the topological ring
$\R$ is separated by assumption, we can conclude that $s=0$ in~$\R$.
\end{proof}

 For the next lemma we need the construction of the topological ring of
\emph{row-zero-convergent matrices} $\Mat_Y(\R)$ (see~\cite[Section~5]{PS3}).
 For any set $Y$, the elements of $\Mat_Y(\R)$ are $Y\times Y$ matrices
$(m_{x,y}\in\R)_{x,y\in Y}$ such that for every fixed $x\in Y$
the family of elements $(m_{x,y})_{y\in Y}$ converges to zero in
the topology of~$\R$. The usual matrix multiplication, which is well-defined thanks to the infinite summation in $\R$, gives $\Mat_Y(\R)$ a ring structure.
There is also a natural topology in $\Mat_Y(\R)$, which makes it a complete, separated topological ring with a base of neighborhoods of zero consisting of open right ideals.

The main motivation for the construction of $\Mat_Y(\R)$ were Morita equivalence type results.
In particular, following~\cite[Proposition~5.2]{PS3}, the categories of discrete right
modules over the rings $\R$ and $\Mat_Y(\R)$ are naturally equivalent.
The equivalence is provided by the functor
\begin{equation} \label{discrete-Morita}
\cV_Y\:\discr\R\lrarrow\discr\Mat_Y(\R)
\end{equation}
assigning to every discrete right $\R$\+module $\N$ the discrete right
$\Mat_Y(\R)$\+module $\cV_Y(\N)=\N^{(Y)}$.
Elements of the direct sum $\N^{(Y)}$ of $Y$ copies of $\N$ are
interpreted as finite rows of elements of $\N$, and $\Mat_Y(\R)$ acts
in $\cV_Y(\N)$ by the usual right action of matrices on row-vectors
(which is well-defined in this case due to the row-zero-convergence
condition imposed on the elements of $\Mat_Y(\R)$ and the discreteness
condition imposed on~$\N$).

\begin{lem} \label{matrix-Jacobson}
 The topological Jacobson radical of the topological ring\/ $\Mat_Y(\R)$
consists of all the (row-zero-convergent) matrices with entries in\/
the topological Jacobson radical\/ $\H$ of the ring\/~$\R$.
 So\/ $\H(\Mat_Y(\R))=\Mat_Y(\H(\R))$.
\end{lem}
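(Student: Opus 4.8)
The plan is to describe $\H(\Mat_Y(\R))$ through its action on the simple discrete right $\Mat_Y(\R)$\+modules and to identify the latter via the equivalence $\cV_Y$ of~\eqref{discrete-Morita}. First I would record the following reformulation of the topological Jacobson radical, valid for any complete, separated right linear topological ring whose open right ideals form a base of neighborhoods of zero: an element $h\in\R$ lies in $\H(\R)$ if and only if $xh=0$ for every element $x$ of every simple discrete right $\R$\+module. Indeed, every open maximal right ideal $M\subset\R$ is the annihilator of the generator $1+M$ of the simple discrete right module $\R/M$, while the annihilator of any nonzero element of a simple discrete right module is an open maximal right ideal; hence the intersection $\H(\R)$ of all open maximal right ideals coincides with the intersection of the annihilators of all elements of all simple discrete right modules. (Equivalently, one can invoke \cite[Lemma~6.2(ii)]{Pproperf}, as in the proof of Lemma~\ref{Jacobson-additionally-characterized}.) Since $\Mat_Y(\R)$ is again a complete, separated right linear topological ring with a base of open right ideals, the very same criterion applies to it.

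Next I would use that $\cV_Y\:\discr\R\rarrow\discr\Mat_Y(\R)$ is an equivalence of abelian categories by \cite[Proposition~5.2]{PS3}. An equivalence preserves and reflects simple objects and is essentially surjective, so the simple discrete right $\Mat_Y(\R)$\+modules are precisely the objects $\cV_Y(\cS)=\cS^{(Y)}$, where $\cS$ ranges over the simple discrete right $\R$\+modules. Therefore, by the criterion of the first step, a matrix $m=(m_{x,y})_{x,y\in Y}\in\Mat_Y(\R)$ lies in $\H(\Mat_Y(\R))$ if and only if it annihilates $\cS^{(Y)}$ for every simple discrete right $\R$\+module~$\cS$.

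It then remains to carry out the entrywise computation. Writing the right action of $\Mat_Y(\R)$ on the rows $\cV_Y(\cS)=\cS^{(Y)}$, a finitely supported row $v=(v_z)_{z\in Y}$ is sent to the row whose $y$\+th entry is $\sum_z v_z m_{z,y}$. Taking $v$ supported at a single index $x$ with an arbitrary value $s\in\cS$ shows that $m$ annihilates $\cS^{(Y)}$ if and only if $s\,m_{x,y}=0$ for all $s\in\cS$ and all $x,y\in Y$, that is, if and only if every entry $m_{x,y}$ annihilates~$\cS$. Combining this with the first step, $m\in\H(\Mat_Y(\R))$ if and only if every entry $m_{x,y}$ annihilates all simple discrete right $\R$\+modules, i.e., $m_{x,y}\in\H(\R)$ for all $x,y$. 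This is exactly the assertion that $\H(\Mat_Y(\R))$ consists of the row-zero-convergent matrices with entries in $\H(\R)$, so that $\H(\Mat_Y(\R))=\Mat_Y(\H(\R))$.

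I expect the only genuine issue to be the bookkeeping in the first step, namely verifying that the intersection-of-open-maximal-right-ideals definition of $\H$ really does agree with the annihilator-of-all-simple-discrete-modules description, for $\R$ and $\Mat_Y(\R)$ alike, together with checking that $\cV_Y$ matches up the matrix and row actions so that the single-entry row computation is legitimate. Once these are in place, the remaining steps are formal consequences of $\cV_Y$ being an equivalence.
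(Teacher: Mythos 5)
Your proposal is correct and takes essentially the same route as the paper's proof: both characterize the topological Jacobson radical as the set of elements annihilating all simple discrete right modules (the paper simply cites \cite[Lemma~6.2(ii)]{Pproperf}, which you additionally reprove directly from the definition via annihilators of elements), both identify the simple discrete right $\Mat_Y(\R)$\+modules as the objects $\cV_Y(\cS)=\cS^{(Y)}$ using the equivalence~\eqref{discrete-Morita}, and both finish with the same entrywise computation (which you make explicit via rows supported at a single index). The extra bookkeeping you flag in your last paragraph is exactly the content of the cited lemma and of the description of the matrix action on row vectors, so no genuine gap remains.
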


\begin{proof}
 By~\cite[Lemma~6.2(ii)]{Pproperf}, the topological Jacobson radical
$\H(\R)$ consists of all the elements $a\in\R$ annihilating all
the simple discrete right $\R$\+modules, and similarly for
$\H(\Mat_Y(\R))$.

 The equivalence $\cV_Y\:\discr\R\rarrow\discr\Mat_Y(\R)$ in~\eqref{discrete-Morita}, as any equivalence of abelian categories,
induces a bijection between the isomorphism classes of simple objects.
 So the simple discrete right $\Mat_Y(\R)$\+modules are precisely
the right $\Mat_Y(\R)$\+modules $\cV_Y(\cS)$, where $\cS$
ranges over all the simple discrete right $\R$\+modules.
 It remains to observe that a matrix $A=(a_{xy})_{x,y\in X}
\in\Mat_Y(\R)$ annihilates all the elements of $\cV_Y(\cS)=\cS^{(Y)}$
if and only if all the entries $a_{x,y}$ of $A$ annihilate all
the elements of~$\cS$.
\end{proof}

\begin{proof}[Proof of Proposition~\ref{projective-contra-Nakayama}]
 We follow the argument in~\cite[proof of Proposition~17.14]{AF} with
suitable modifications.
 Let $\G=\R[[X]]$ be a free left $\R$\+contramodule and $\P$ be
a direct summand of~$\G$.
 We will view $\P$ as a subcontramodule in $\G$ and denote by
$e\:\G\rarrow\G$ an idempotent $\R$\+contramodule endomorphism of $\G$
such that $\P=\G e$ (for simplicity of notation, we let~$e$ act in
$\G$ on the right).
 Elements of the set $X$ will be viewed as (the basis) elements of~$\G$.

 Let $q=\sum_{x\in X}q_xx$ be an element of~$\P$.
 Here $(q_x\in\R)_{x\in X}$ is a family of elements converging to zero
in the ring $\R$ and the sum $\sum_{x\in X}q_xx$ can be understood as
the result of applying the contramodule infinite summation operation
with the family of coefficients~$q_x$ to the $X$\+indexed family of
elements $x\in\G$.

 Assuming that $\P=\H\tim\P$, we will prove that $q=0$.
 Indeed, we have $\H\tim\P\subset\H\tim\G=\H[[X]]$; so $\P\subset\H[[X]]
\subset\R[[X]]=\G$.
 For every element $x\in X$, we have $xe\in\P$, hence
$$
 xe=\sum\nolimits_{y\in X}a_{x,y}y, \qquad
 a_{x,y}\in\H\subset\R,
$$
where the family of elements $(a_{x,y})_{y\in X}$ converges to zero
in $\R$ for every fixed $x\in X$.
 Now we can compute that
\begin{multline*}
 0 = q - qe = \sum\nolimits_{x\in X}q_xx - \sum\nolimits_{x\in X}q_xxe \\
 = \sum\nolimits_{x\in X}q_x\sum\nolimits_{y\in Y}\delta_{x,y}y -
 \sum\nolimits_{x\in X}q_x\sum\nolimits_{y\in Y}a_{x,y}y \\
 = \sum\nolimits_{x\in X}q_x\sum\nolimits_{y\in Y}
 (\delta_{x,y}-a_{x,y})y.
\end{multline*}
 The resulting equation means that
\begin{equation} \label{matrix-equation}
 \sum\nolimits_{x\in X}q_x(\delta_{x,y}-a_{x,y})=0
 \qquad\text{for every $y\in X$}.
\end{equation}

 If the set $X$ is empty, then there is nothing to prove.
 Otherwise, choose an element $x_0\in X$, and consider the $X\times X$
matrix $Q=(q_{z,x})_{z,x\in X}$ with the entries $q_{z,x}=q_x$
when $z=x_0$ and $q_{z,x}=0$ when $z\ne x_0$.
 In other words, we consider the family of elements $(q_x)_{x\in X}$ as
an $X$\+indexed row and build an $X\times X$ matrix in which this row
is the only nonzero one.
 We also consider the $X\times X$ matrices $A=(a_{x,y})_{x,y\in X}$
and $1=(\delta_{x,y})_{x,y\in X}$.
 All the three matrices $Q$, $A$, and~$1$ have entries in $\R$ and
zero-convergent rows, so they belong to $\Mat_X(\R)$; and, of course,
$1$~is the unit element of the ring $\Mat_X(\R)$. 
 Then the family of equations~\eqref{matrix-equation} can be expressed
as a matrix multiplication equation $Q(1-A)=0$ in the ring $\Mat_X(\R)$.

 Furthermore, by Lemma~\ref{matrix-Jacobson}, the matrix $A$ belongs
to the topological Jacobson radical of the ring $\Mat_X(\R)$.
 By Lemma~\ref{Jacobson-additionally-characterized}, the right
multiplication with $1-A$ acts injectively in $\Mat_X(\R)$.
 Thus $Q=0$, and it follows that $q=0$, as desired.
\end{proof}

\begin{proof}[Proof of Theorem~\ref{1-strictly-flat-projdim1-cover-thm}]
 Let $\F$ be a $1$\+strictly flat left $\R$\+contramodule of
projective dimension not exceeding~$1$, and $p\:\P\rarrow\F$ be its
projective cover in $\R\contra$.
 Since there are enough projective objects in $\R\contra$, the map~$p$
is surjective.
 Put $\K=\ker(p)$.
 Then $\K$ is a projective $\R$\+contramodule.
 By~\cite[Lemma~3.1]{Pproperf}, $\K$ is a superfluous
subcontramodule in~$\P$.

 According to Proposition~\ref{superfluous-subcontramodule}, we have
$\K\subset\H\tim\P$.
 From Proposition~\ref{1-strictly-flat-projdim1-prop}, we know that
$\K\cap(\H\tim\P)=\H\tim\K$.
 Thus $\K=\H\tim\K$.
 By Proposition~\ref{projective-contra-Nakayama}, it follows that $\K=0$.
 We can conclude that $\F\simeq\P$ is a projective left
$\R$\+contramodule.
\end{proof}

 In particular, by~\cite[Corollary~2.4]{Pproperf}, all countable direct
limits of projective contramodules are $1$\+strictly flat (in fact,
$\infty$\+strictly flat) of projective dimension not exceeding~$1$.
 Hence it follows from Theorem~\ref{1-strictly-flat-projdim1-cover-thm}
that a countable direct limit of projective left $\R$\+contramodules
is projective if it has a projective cover.
 The following special case is of interest.

 A \emph{Bass flat contramodule} is a countable direct limit of free
left $\R$\+contramodules with one generator, computed in the category
of left $\R$\+contramodules $\R\contra$,
$$
 \B=\varinjlim\nolimits^{\R\contra}_\omega
 \,(\R\overset{*a_1}\lrarrow\R\overset{*a_2}\lrarrow\R
 \overset{*a_3}\lrarrow\dotsb)
$$
where $a_1$, $a_2$,~\dots\ is a sequence of elements of $\R$ and
$*a\:\R\rarrow\R$ is the left $\R$\+contramodule morphism of right
multiplication with $a\in\R$.
 
\begin{cor} \label{bass-flat-contra-cover}
 If a Bass flat left\/ $\R$\+contramodule\/ $\B$ has a projective
cover in\/ $\R\contra$, then\/ $\B$ is projective (as a left\/
$\R$\+contramodule). \qed
\end{cor}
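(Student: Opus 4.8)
The plan is to recognize $\B$ as a special case of a countable direct limit of projective $\R$\+contramodules and then to apply the general statement established immediately above, so that the corollary follows from Theorem~\ref{1-strictly-flat-projdim1-cover-thm} with essentially no new work.

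First I would observe that the free left $\R$\+contramodule with one generator $\R[[*]]=\R$ is projective — indeed, it is the natural projective generator of $\R\contra$ recorded in the Preliminaries. Hence the Bass flat contramodule
$$
 \B = \varinjlim\nolimits^{\R\contra}_\omega
 \,(\R\overset{*a_1}\lrarrow\R\overset{*a_2}\lrarrow\R
 \overset{*a_3}\lrarrow\dotsb)
$$
is a countable direct limit of projective $\R$\+contramodules, the transition maps being the right-multiplication morphisms $*a_n$.

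Next I would invoke \cite[Corollary~2.4]{Pproperf}, according to which every such countable direct limit of projective contramodules is $\infty$\+strictly flat of projective dimension not exceeding~$1$. This places $\B$ squarely within the hypotheses of Theorem~\ref{1-strictly-flat-projdim1-cover-thm}. Since $\B$ is assumed to admit a projective cover in $\R\contra$, that theorem immediately yields that $\B$ is projective.

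I do not expect any genuine obstacle in this corollary: all the substantive work has already been done in establishing Theorem~\ref{1-strictly-flat-projdim1-cover-thm} — in particular, the confinement of a superfluous subcontramodule of a projective contramodule within the topological Jacobson radical (Proposition~\ref{superfluous-subcontramodule}) and the contramodule Nakayama lemma (Proposition~\ref{projective-contra-Nakayama}). The corollary is exactly the special case of the preceding discussion applied to the direct system of free rank-one contramodules, so the only thing that remains is to record the two homological hypotheses of the theorem, both of which are supplied by the cited result.
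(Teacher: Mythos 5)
Your proposal is correct and matches the paper's own argument exactly: the paper also notes that, by~\cite[Corollary~2.4]{Pproperf}, countable direct limits of projective contramodules are $\infty$\+strictly flat of projective dimension at most~$1$, and then deduces the corollary directly from Theorem~\ref{1-strictly-flat-projdim1-cover-thm} (which is why the statement carries a \qed with no separate proof). Nothing is missing.
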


 The generalization to uncountable direct limits of projective
contramodules will be obtained as
Corollary~\ref{direct-limits-proj-contramodules-no-proj-covers}
in Section~\ref{contramodule-direct-limits-secn}.

\Section{Quasi-Split Exact Sequences and Locally Split Morphisms}

 In the next several sections, we consider two kinds of abelian
categories.
 An abelian category is said to be \emph{Ab3} if it is cocomplete,
or in other words, if it has set-indexed coproducts.
 A cocomplete abelian category is said to satisfy \emph{Ab5} if
it has exact direct limit functors.

 In particular, the category of left contramodules $\R\contra$ over
a topological ring $\R$ is Ab3, but usually not~Ab5.
 A \emph{Grothendieck abelian category} is an Ab5~category with
a generator.
 The category of left modules $A\modl$ over any associative ring $A$
is Grothendieck.

 Let $\sA$ be an Ab5~category.
 We will say that a short exact sequence $0\rarrow K\rarrow C\rarrow
D\rarrow0$ in $\sA$ is \emph{quasi-split} if it is the direct limit
of a direct system of split short exact sequences $0\rarrow K_x\rarrow C
\rarrow D_x\rarrow0$, indexed by a directed poset $X$, where
$(K_x)_{x\in X}$ and $(D_x)_{x\in X}$ are direct systems of objects
in $\sA$ and $(C_x=C)_{x\in X}$ is a constant direct system.
 If this is the case, the morphism $K\rarrow C$ is said to be
a \emph{quasi-split monomorphism} and the morphism $C\rarrow D$ is
a \emph{quasi-split epimorphism}.

 The definition of a quasi-split short exact sequence in
an Ab3~category $\sB$ is slightly more complicated.
 Suppose that we are given a direct system
\begin{equation} \label{direct-system-constant-middle}
 0\lrarrow\K_x\lrarrow\C\lrarrow\D_x\lrarrow0
\end{equation}
of split short exact sequences in $\sB$, indexed by a directed poset
$X$, where $(\K_x)_{x\in X}$ and $(\D_x)_{x\in X}$ are direct systems
of objects and $(\C_x=\C)_{x\in X}$ is a constant direct system.

 The direct limit of a direct system of short exact sequences in
$\sB$ does not need to be exact, but only right exact; so
the direct limit of~\eqref{direct-system-constant-middle} is a right
exact sequence
\begin{equation} \label{right-exact-sequence}
 \fM\lrarrow\C\lrarrow\D\lrarrow0.
\end{equation}
 Let $\K$ be the image of the morphism $\fM\rarrow\C$.
 Then we will say that the short exact sequence $0\rarrow\K\rarrow
\C\rarrow\D\rarrow0$ in $\sB$ is \emph{quasi-split}, the morphism
$\K\rarrow\C$ is a \emph{quasi-split monomorphism}, and the morphism
$\C\rarrow\D$ is a \emph{quasi-split epimorphism}.

 The following proposition (generalizing~\cite[Lemma~2.1]{GG})
introduces the class of examples we are mainly interested in.

\begin{prop} \label{direct-limit-epi-quasi-split}
 Let $(\fc_{y,x}\:\C_x\to\C_y)_{x<y\in X}$ be a diagram in a cocomplete
abelian category\/ $\sB$, indexed by a directed poset $X$, and let\/
$\D=\varinjlim^\sB_{x\in X}\C_x$ be its direct limit in\/~$\sB$.
 Then the natural epimorphism\/ $\fp\:\C'=\coprod^\sB_{x\in X}\C_x
\rarrow\D$ is quasi-split.
\end{prop}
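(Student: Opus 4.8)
The plan is to realize the canonical epimorphism as a directed colimit of split presentations indexed by the finite ``truncations'' of the diagram. Recall first that the directed colimit is computed as a cokernel: writing $\iota_x\:\C_x\rarrow\C'$ for the coproduct injections, one has $\D=\operatorname{coker}\Theta$, where $\Theta\:\coprod^\sB_{x\le y}\C_x\rarrow\C'$ acts on the summand indexed by a pair $x\le y$ by $\iota_x-\iota_y\fc_{y,x}$, and $\fp\:\C'\rarrow\D$ is the cokernel morphism; in particular $\ker\fp=\im\Theta$. As indexing poset I would take the set $W$ of all finite subsets $F\subseteq X$ possessing a greatest element $t=\max F$, ordered by inclusion. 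Since $X$ is directed, any two members $F$, $F'$ of $W$ admit a common upper bound (adjoin to $F\cup F'$ an upper bound of $\max F$ and $\max F'$), so $W$ is directed. For $F\in W$ with $t=\max F$ I would introduce the endomorphism $\rho_F\:\C'\rarrow\C'$ determined on the coproduct by $\rho_F\iota_x=\iota_t\fc_{t,x}$ for $x\in F$ and $\rho_F\iota_x=\iota_x$ for $x\notin F$; informally, $\rho_F$ collapses the summands indexed by $F$ onto the top summand $\C_t$ and leaves the other summands untouched.

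Second, I would establish the general component identity $\rho_{F'}\rho_F=\rho_{F'}$ for $F\subseteq F'$ in $W$ (so that $t=\max F\in F'$ and $t\le t'=\max F'$), a short case check using $\fc_{t',t}\fc_{t,x}=\fc_{t',x}$. The case $F'=F$ gives idempotency $\rho_F^2=\rho_F$, whence $\C'=\K_F\oplus\D_F$ with $\K_F=\im(1-\rho_F)$ and $\D_F=\im\rho_F$, yielding a split short exact sequence $0\rarrow\K_F\rarrow\C'\rarrow\D_F\rarrow0$ with the prescribed constant middle term. The case $F\subsetneq F'$ gives $\rho_{F'}(1-\rho_F)=0$, which exhibits $\K_F$ as a subobject of $\K_{F'}=\ker\rho_{F'}$ and induces the quotient morphism $\D_F\rarrow\D_{F'}$. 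Thus the sequences $0\rarrow\K_F\rarrow\C'\rarrow\D_F\rarrow0$ form a direct system of split short exact sequences with constant middle $\C'$, indexed by the directed poset $W$, exactly as demanded by the definition of a quasi-split epimorphism in an Ab3 category.

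It remains to identify the direct limit of this system with the sequence $\ker\fp\rarrow\C'\rarrow\D\rarrow0$. Since directed colimits are right exact, the limit is a right exact sequence $\fM\rarrow\C'\rarrow\varinjlim_F\D_F\rarrow0$ with $\fM=\varinjlim_F\K_F$, and I must show that the image of $\fM\rarrow\C'$ equals $\ker\fp=\im\Theta$. On the one hand, the component description gives $1-\rho_F=\Theta\circ j_F$, where $j_F\:\C'\rarrow\coprod^\sB_{x\le y}\C_x$ sends the summand $\C_x$ with $x\in F$ identically onto the $(x\le t)$\+summand and the summands with $x\notin F$ to zero; hence $\K_F=\im(1-\rho_F)\subseteq\im\Theta$ for every $F$, and so $\im(\fM\rarrow\C')\subseteq\im\Theta$. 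On the other hand, the restriction of $\Theta$ to the summand indexed by $x\le y$ is precisely $(1-\rho_{\{x,y\}})\iota_x$, so it factors through $\K_{\{x,y\}}\subseteq\C'$ and therefore through $\fM\rarrow\C'$; as these summands generate the coproduct, $\im\Theta\subseteq\im(\fM\rarrow\C')$. Consequently $\im(\fM\rarrow\C')=\im\Theta=\ker\fp$, and passing to cokernels identifies $\varinjlim_F\D_F$ with $\D$ and the induced morphism $\C'\rarrow\D$ with $\fp$. By the definition of quasi-splitness, this proves that $\fp$ is a quasi-split epimorphism.

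I expect the main obstacle to lie in the last step, carried out purely categorically. Because $\sB$ is only assumed to be Ab3, directed colimits need not be exact, so $\fM\rarrow\C'$ need not be a monomorphism and one cannot argue with elements of $\ker\fp$ as in the module case. The device that makes everything work is to route all identifications through the single morphism $\Theta$ whose image is $\ker\fp$, reducing the two required inclusions to the factorization identity $1-\rho_F=\Theta\circ j_F$ and to the observation that each summand of $\Theta$ factors through some $\K_{\{x,y\}}$; the idempotent relations $\rho_F^2=\rho_F$ and $\rho_{F'}\rho_F=\rho_{F'}$ then replace all element-wise manipulations. This is also precisely why the Ab3 definition of a quasi-split sequence is phrased in terms of the image of $\fM\rarrow\C'$ rather than assuming that morphism to be monic.
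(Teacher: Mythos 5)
Your proof is correct, and it rests on the same underlying idea as the paper's: realize $\ker\fp$ as a directed union of direct summands of $\C'=\coprod_{x\in X}\C_x$, each obtained by collapsing a family of coproduct summands onto a summand indexed by an upper bound, so that $\fp$ becomes a direct limit of split epimorphisms with constant middle term~$\C'$. The implementations, however, are genuinely different. The paper indexes its direct system by the poset $X$ itself: for $z\in X$ it takes $\K_z$ to be the kernel of the canonical presentation $\coprod_{x\le z}\C_x\rarrow\C_z$ of the truncated subdiagram $(\C_x)_{x\le z}$, whose colimit is $\C_z$ because $z$~is its greatest element; the splitting $\K_z\cong\coprod_{x<z}\C_x$ is the natural one (note that $\K_z$ may involve infinitely many summands), and the identification of the direct limit of the split system with the canonical presentation of $\D$ comes essentially for free, because the truncated presentations visibly have the full presentation as their colimit. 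You index instead by the poset $W$ of finite subsets of $X$ possessing a greatest element, build the splittings from the explicit idempotents $\rho_F$, so that the single identity $\rho_{F'}\rho_F=\rho_{F'}$ carries all the bookkeeping, and then must identify the colimit by hand; you do this correctly via the two inclusions $\im(1-\rho_F)\subseteq\im\Theta$ (from the factorization $1-\rho_F=\Theta j_F$) and $\im\Theta\subseteq\im(\fM\rarrow\C')$ (from the observation that the restriction of $\Theta$ to the summand indexed by $x\le y$ is $(1-\rho_{\{x,y\}})\iota_x$). Your route is more finitary and purely equational---each $\K_F$ is a finite coproduct of objects $\C_x$ embedded diagonally, and no truncated colimits are needed---whereas the paper's route keeps the index poset equal to $X$, which is precisely what makes Proposition~\ref{direct-limit-epi-quasi-split} a direct generalization of \cite[Lemma~2.1]{GG} (describing $\ker\fp$ as the union of an $X$-indexed directed family of direct summands of~$\C'$) and spares it the separate verification that the limit of the split system recovers~$\fp$.
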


\begin{proof}
 We have a natural right exact sequence
\begin{equation} \label{bar-direct-limit}
 \coprod_{x<y\in X}\C_{x,y}\overset\ft\lrarrow\coprod_{x\in X}\C_x
 \overset\fp\lrarrow\D\lrarrow 0,
\end{equation}
where $\C_{x,y}=\C_x$ is a copy of the object $\C_x$ for every pair
of elements $x<y$ in~$X$.

 Fix an element $z\in X$, and consider the subdiagram
$(\C_x)_{x\in X}^{x\le z}$ of our diagram $(\C_x)_{x\in X}$ formed
by all the objects $\C_x$ with $x\le z$ and the morphisms
$\fc_{y,x}\:\C_x\rarrow\C_y$, \ $x<y\le z$.
 Obviously, the object $\C_z$ is the direct limit of the diagram
$(\C_x)_{x\in X}^{x\le z}$.
 So the right exact sequence~\eqref{bar-direct-limit} for the diagram
$(\C_x)_{x\in X}^{x\le z}$ takes the form
\begin{equation} \label{bar-bounded-subdiagram}
 \coprod_{x<y\in X}^{y\le z}\C_{x,y}\overset{\ft_z}\lrarrow
 \coprod_{x\in X}^{x\le z}\C_x\overset{\fp_z}\lrarrow\C_z\lrarrow0.
\end{equation}
 As the element $z\in X$ varies, the right exact
sequences~\eqref{bar-bounded-subdiagram} form a diagram indexed by
the same poset~$X$.
 Given a pair of elements $z<w\in X$, the related morphisms on
the middle and the leftmost terms of~\eqref{bar-bounded-subdiagram}
are the subcoproduct inclusions corresponding to the inclusions of
subsets $\{x\in X\mid x\le z\}\hookrightarrow\{x\in X\mid x\le w\}$
and $\{(x,y)\mid x<y\le z\}\hookrightarrow\{(x,y)\mid x<y\le w\}$,
while the morphism on the rightmost terms is
$\fc_{w,z}\:\C_z\rarrow\C_w$.
 The right exact sequence~\eqref{bar-direct-limit} is the direct limit
of the right exact sequences~\eqref{bar-bounded-subdiagram}
over $z\in X$.

 We denote by $\K$ the image of the morphism~$\ft$ (which coincides
with the kernel of the morphism~$\fp$) in
the sequence~\eqref{bar-direct-limit}.
 Similarly, let us denote by $\K_z$ the image of the morphism~$\ft_z$
(which coincides with the kernel of the morphism~$\fp_z$) in
the sequence~\eqref{bar-bounded-subdiagram}.
 So we have short exact sequences
\begin{equation} \label{generalized-telescope}
 0\lrarrow\K\overset\ii\lrarrow\coprod_{x\in X}\C_x
 \overset\fp\lrarrow\D\lrarrow 0
\end{equation}
and
\begin{equation} \label{bounded-generalized-telescope}
 0\lrarrow\K_z\overset{\ii_z}\lrarrow \coprod_{x\in X}^{x\le z}\C_x
 \overset{\fp_z}\lrarrow\C_z\lrarrow 0.
\end{equation}
 As the element $z\in X$ varies, the short exact
sequences~\eqref{bounded-generalized-telescope} form a diagram, indexed
by the poset~$X$.
 The morphism~$\fp$ is the direct limit of the morphisms~$\fp_z$, while
the object $\K$ does \emph{not} need to be the direct limit of
the objects $\K_z$ (as the direct limits in $\sB$ are only right exact).
 In fact, the direct limit of the short exact
sequences~\eqref{bounded-generalized-telescope} is, generally speaking,
a right exact sequence of the form
\begin{equation} \label{pseudo-telescope}
 \fM=\varinjlim_{z\in X}\K_z\overset\fm\lrarrow
 \coprod_{x\in X}\C_x\overset\fp\lrarrow\D\lrarrow0,
\end{equation}
and the object $\K$ is the image of the morphism~$\fm$.

 Notice that the object $\K_z$ is, of course, naturally isomorphic
to $\coprod_{x\in X}^{x<z}\C_x$, and the short exact
sequence~\eqref{bounded-generalized-telescope} is naturally split.
 However, the morphism
$\ii_z\:\coprod_{x\in X}^{x<z}\C_x\rarrow\coprod_{x\in X}^{x\le z}\C_z$
is \emph{not} the subcoproduct inclusion related to the inclusion
of subsets $\{x\mid x<z\}\hookrightarrow\{x\mid x\le z\}$.
 Rather, it is a certain ``diagonal'' map which can be constructed
in terms of the morphisms $\fc_{z,x}\:\C_x\rarrow\C_z$.
 Given a pair of elements $z<w$, the related morphism between
the middle terms of the sequences~\eqref{bounded-generalized-telescope}
is the subcoproduct inclusion described above, but the related
morphism $\fk_{w,z}\:\K_z\rarrow\K_w$ between the leftmost terms of
the sequences~\eqref{bounded-generalized-telescope} is \emph{not}
the subcoproduct inclusion.

 Now we recall the notation $\C'=\coprod_{x\in X}\C_x$, and set $\D_z$
to be the cokernel of the composition of split monomorphisms
$\K_z\rarrow\coprod_{x\in X}^{x\le z}\C_x\rarrow\C'$.
 Then we have a direct system of split short exact sequences
\begin{equation} \label{direct-limit-related-constant-middle}
 0\lrarrow\K_z\overset{\ii_z'}\lrarrow\C'\overset{\fp'_z}\lrarrow\D_z
 \lrarrow0,
\end{equation}
where the objects $(\C_z'=\C')_{z\in X}$ form a constant direct system
and the morphism~$\fm$ is the direct limit of the morphisms~$\ii_z'$
over $z\in X$.
 Hence the morphism~$\fp$ is the direct limit of the morphisms~$\fp_z'$,
and we are done.
\end{proof}

 The next concept is more general than quasi-splitness.
 Before introducing it for arbitrary Ab3~categories, let us recall its
definition for the categories of modules.
 Let $A$ be an associative ring, and $K\subset C$ be (say, left)
$A$\+modules.
 Then the submodule $K$ is said to be \emph{locally split} in $C$
if for every element $k\in K$ there exists an $A$\+module morphism
$h\:C\rarrow K$ such that $h(k)=k$.
 If this is the case, then for any finite set of elements
$k_1$,~\dots, $k_n\in K$ there exists an $A$\+module morphism
$g\:C\rarrow K$ such that $g(k_i)=k_i$ for all $i=1$,~\dots,~$n$
\cite[Proposition~1.2\,(2)\,$\Rightarrow$\,(1)]{Zim}.

 Let $\fm\:\fM\rarrow\C$ be a morphism in an Ab3~category~$\sB$.
 We will say that $\fm$~is \emph{locally split} if there exists
a direct system $(\K_x)_{x\in X}$ in the category $\sB$, indexed by some
directed poset~$X$, an epimorphism $\fs\:\varinjlim_{x\in X}\K_x
\rarrow\fM$, and morphisms $\fg_x\:\C\rarrow\fM$, \,$x\in X$, such that
the equation $\fg_x\fm\fs\fk_x=\fs\fk_x$ holds for every $x\in X$,
where $\fk_x\:\K_x\rarrow\varinjlim_{y\in X}\K_y$ is the canonical
morphism.
 A subobject $\K\subset\C$ is said to be \emph{locally split}
if its inclusion morphism $\ii\:\K\rarrow\C$ is locally split.

 Let us emphasize that our definition of a locally split morphism is
designed to handle locally split \emph{monomorphisms}.
 It is \emph{not} relevant to the notion of a locally split epimorphism
(which has been also considered in the module theory literature, as per
the references in the introduction; for example, a dual version of
the above result about the equivalence of the local splitness for one
element and for a finite number of elements can be found
in~\cite[Corollary~2]{Az2}).

 Though a locally split morphism in an Ab3 category, in the sense of
our definition, does not need to be a monomorphism, any locally split
morphism in an Ab5 category is a monomorphism, as we will see in
the next section.

\begin{lem} \label{image-of-locally-split}
 In any Ab3 category, the image of a locally split morphism is
a locally split subobject.
\end{lem}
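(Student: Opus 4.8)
The plan is to exploit the canonical epi-mono factorization available in any abelian category and to transport the data witnessing local splitness of~$\fm$ through the epi part. I would write $\fm\:\fM\rarrow\C$ as a composite $\fM\overset e\rarrow\K\overset\ii\rarrow\C$, where $\K=\im(\fm)$ is the image subobject, $\ii\:\K\rarrow\C$ is its inclusion, and $e\:\fM\rarrow\K$ is the induced epimorphism; in particular $\ii e=\fm$. By hypothesis there are a direct system $(\K_x)_{x\in X}$ indexed by a directed poset~$X$, an epimorphism $\fs\:\varinjlim_{x\in X}\K_x\rarrow\fM$, and morphisms $\fg_x\:\C\rarrow\fM$ satisfying $\fg_x\fm\fs\fk_x=\fs\fk_x$ for all $x\in X$, where $\fk_x\:\K_x\rarrow\varinjlim_y\K_y$ is the canonical morphism. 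The idea is to reuse the very same direct system and to produce the witnesses for~$\ii$ by post-composing the existing data with~$e$.

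Concretely, I would set $\fs'=e\fs\:\varinjlim_{x\in X}\K_x\rarrow\K$ and $\fg_x'=e\fg_x\:\C\rarrow\K$ for each $x\in X$. Since both $e$ and $\fs$ are epimorphisms, so is their composite $\fs'$, which is exactly the epimorphism with target~$\K$ required by the definition of a locally split morphism. The direct system remains $(\K_x)_{x\in X}$ and the canonical morphisms $\fk_x$ are unchanged, so all the structural data demanded by the definition for the morphism $\ii$ is in place.

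It then remains to verify the defining equation $\fg_x'\,\ii\,\fs'\,\fk_x=\fs'\fk_x$ for every $x\in X$. Expanding the left-hand side and using $\ii\fs'=\ii e\fs=\fm\fs$ together with $\fg_x'=e\fg_x$, I would compute
\begin{equation*}
 \fg_x'\,\ii\,\fs'\,\fk_x=e\,\fg_x\,\fm\,\fs\,\fk_x
 =e\,(\fg_x\fm\fs\fk_x)=e\,(\fs\fk_x)=\fs'\fk_x,
\end{equation*}
where the third equality is precisely the hypothesis on~$\fg_x$. This establishes that $\ii$ is a locally split morphism, i.e.\ that $\K=\im(\fm)$ is a locally split subobject of~$\C$.

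The only points requiring (routine) care are the existence of the epi-mono factorization and the identity $\ii e=\fm$, both of which hold in any abelian category, together with the elementary fact that a composite of epimorphisms is again an epimorphism; no exactness of direct limits is needed, so the argument remains valid in the full Ab3 generality claimed. I expect no genuine obstacle here: the substance of the lemma is exactly that local splitness is preserved under passing from a morphism to its image, and the short computation above makes this transparent.
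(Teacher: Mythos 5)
Your proof is correct and follows essentially the same route as the paper's: both factor $\fm$ through its image as an epimorphism followed by the inclusion $\ii$, reuse the same direct system, post-compose the epimorphism $\fs$ and the morphisms $\fg_x$ with the epi part of the factorization, and verify the defining equation by the same one-line computation. No gaps.
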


\begin{proof}
 Denote by $\fN$ the image of a morphism $\fm\:\fM\rarrow\C$;
so $\fm$~decomposes as $\fM\overset\fn\rarrow\fN\overset\ii\rarrow\C$,
where $\fn$~is an epimorphism and $\ii$~is a monomorphism.
 Assuming that the morphism~$\fm$ is locally split, we have to
show that the morphism~$\ii$ is.
 Indeed, let $(\K_x)_{x\in X}$ be a direct system, $\fs\:
\varinjlim_{x\in X}\K_x\rarrow\fM$ be an epimorphism, and
$\fg_x\:\C\rarrow\fM$ be a family of morphisms witnessing
the local splitness of the morphism~$\fm$.
 Then the same direct system $(\K_x)_{x\in X}$, the epimorphism
$\fn\fs\:\varinjlim_{x\in X}\K_x\rarrow\fN$, and the morphisms
$\fn\fg_x\:\C\rarrow\fN$ witness the local splitness of
the monomorphism~$\ii$ (because the equations $\fg_x\fm\fs\fk_x
=\fs\fk_x$ imply the equations $\fn\fg_x\ii\fn\fs\fk_x=\fn\fs\fk_x$
for all $x\in X$).
\end{proof}

 The following lemma shows that our terminology is consistent with
the classical definition for module categories.

\begin{lem} \label{local-splitness-classical-and-categorical}
 Let $A$ be an associative ring and $i\:K\rarrow C$ be an injective
morphism of left $A$\+modules.
 Then the morphism~$i$ is locally split in $A\modl$, in the sense of
the above categorical definition, if and only if it is locally
split in the classical sense.
\end{lem}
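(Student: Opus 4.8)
The plan is to prove the two implications separately, exploiting the fact that in $A\modl$ a directed colimit is computed elementwise: every element of $\varinjlim_x\K_x$ is represented as $\fk_x(\eta)$ for some index~$x$ and some $\eta\in\K_x$, and epimorphisms are exactly the surjections. Throughout I identify $K$ with the submodule $\im(i)\subset C$, so that $i=\ii$ plays the role of the morphism $\fM\rarrow\C$ with $\fM=K$ and $\C=C$ in the categorical definition; thus a categorical witness consists of a direct system $(\K_x)_{x\in X}$, an epimorphism $\fs\:\varinjlim_{x\in X}\K_x\rarrow K$, and morphisms $\fg_x\:C\rarrow K$ satisfying $\fg_x i\fs\fk_x=\fs\fk_x$ for all~$x$.

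For the implication ``categorical $\Rightarrow$ classical'', I would fix $k\in K$ and lift it through the witness. Since $\fs$ is surjective, there is $\xi\in\varinjlim_x\K_x$ with $\fs(\xi)=k$; by the elementwise description of the colimit, $\xi=\fk_x(\eta)$ for some $x\in X$ and some $\eta\in\K_x$. Evaluating the defining equation $\fg_x i\fs\fk_x=\fs\fk_x$ on~$\eta$ gives $\fg_x(i(k))=\fg_x(i(\fs\fk_x(\eta)))=\fs\fk_x(\eta)=k$, so $h=\fg_x\:C\rarrow K$ satisfies $h(k)=k$. As $k$ was arbitrary, $i$ is locally split in the classical sense.

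For ``classical $\Rightarrow$ categorical'', the naive choice $\K_x=K$ with identity transition maps is wrong, since it would force $\fg\.i=\id_K$, i.e.\ an honest splitting; the point is instead to arrange that each $\fs\fk_x$ has small image. I would index by the directed poset $X$ of finite subsets $F\subseteq K$ ordered by inclusion, set $\K_F=A[F]$ (the free left $A$\+module on~$F$), and take the transition maps $A[F]\rarrow A[F']$ for $F\subseteq F'$ to be the evident inclusions of free modules. Then $\varinjlim_F A[F]=A[K]$, and $\fs\:A[K]\rarrow K$ is the evaluation map $e_k\mapsto k$, which is surjective. For each finite $F=\{k_1,\dots,k_n\}$, the classical hypothesis together with its finite strengthening recalled above (\cite[Proposition~1.2]{Zim}) supplies a morphism $\fg_F=g_F\:C\rarrow K$ with $g_F(k_j)=k_j$ for all~$j$. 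The required identity $\fg_F i\fs\fk_F=\fs\fk_F$ is an equality of $A$\+module maps $A[F]\rarrow K$, so it suffices to check it on the basis elements $e_k$, $k\in F$, where both sides send $e_k$ to~$k$ (using $\fg_F(i(k))=g_F(k)=k$); this is the routine verification.

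The two directions are short; the only real subtlety, and the step I expect to require the most care, is the choice of direct system in the second implication, where one must decouple the index system from $K$ itself (via free modules on finite subsets) precisely so that the categorical condition, which constrains $\fg_x$ only on the image of $\fs\fk_x$, becomes satisfiable without producing a global splitting. Invoking the passage from one-element to finite-set local splitness, already stated in the text before the lemma, is exactly what makes the finite-subset indexing work.
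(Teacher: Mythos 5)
Your proof is correct and follows essentially the same route as the paper: your ``categorical $\Rightarrow$ classical'' direction is the paper's argument (lift an element through the epimorphism $\fs$ and use the elementwise description of directed colimits), and your ``classical $\Rightarrow$ categorical'' direction rests on the same key ingredients, namely indexing by finite pieces of $K$ and invoking the one-element-to-finite-set strengthening from Zimmermann. The only cosmetic difference is that you build the witness from free modules $A[F]$ on finite subsets $F\subseteq K$ (so that $\fs\:A[K]\rarrow K$ is an epimorphism), whereas the paper takes the direct system of finitely generated submodules $K_x\subseteq K$ itself (so that $\fs$ is the natural isomorphism $\varinjlim_{x}K_x\rarrow K$); both systems satisfy the definition equally well.
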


\begin{proof}
 ``If'': assume that for every finitely generated submodule
$M\subset K$ there exists a morphism $g\:C\rarrow K$ such that
$gi(m)=m$ for all $m\in M$.
 Let $X$ denote the poset of all finitely generated submodules
in $K$, ordered by inclusion, and let $K_x\subset K$ be
the finitely generated submodule corresponding to an element $x\in X$.
 Let $s\:\varinjlim_{x\in X}K_x\rarrow K$ be the natural isomorphism.
 For every $x\in X$, let $g_x\:C\rarrow K$ be a morphism such that
$g_xi(k)=k$ for all $k\in K_x$.
 Then the direct system $(K_x)_{x\in X}$, the isomorphism
$s\:\varinjlim_{x\in X}K_x\rarrow K$, and the morphisms
$g_x\:C\rarrow K$ witness the local splitness of the monomorphism
$i\:K\rarrow C$.

 ``Only if'': assume that a direct system $(K_x)_{x\in X}$,
an epimorphism $s\:\varinjlim_{x\in X}K_x\rarrow K$, and some
morphisms $g_x\:C\rarrow K$ witness the local splitness of
the monomorphism $i\:K\rarrow C$.
 Then, for any finitely generated submodule $M\subset K$, there
exists an index $x\in X$ such that the submodule $M\subset K$ is
contained in the image of the composition $K_x\overset{k_x}\rarrow
\varinjlim_{y\in X}K_y\overset{s}\rarrow K$.
 It follows that the morphism $g_x\:C\rarrow K$ satisfies
the equation $g_xi(m)=m$ for all $m\in M$.
\end{proof}

 The proof of Lemma~\ref{local-splitness-classical-and-categorical}
shows that, for direct systems of \emph{finitely generated} modules
$(K_x)_{x\in X}$ with an epimorphism $s\:\varinjlim_{x\in X}K_x
\rarrow K$, the possibility to satisfy the definition of local
splitness by finding a suitable family of morphisms $g_x\:C\rarrow K$
does \emph{not} depend on the choice of a particular direct system.
 If the local splitness of an injective $A$\+module morphism
$i\:K\rarrow C$ is witnessed by a direct system $(K_x)_{x\in X}$,
an epimorphism $s\:\varinjlim_{x\in X}K_x\rarrow K$, and some morphisms
$g_x\:C\rarrow K$, \,$x\in X$, then for any direct system of finitely
generated $A$\+modules $(L_y)_{y\in Y}$ and an epimorphism
$t\:\varinjlim_{y\in Y}L_y\rarrow K$ one can find
morphisms $h_y\:C\rarrow K$, \,$y\in Y$ which also witness
the local splitness of~$i$.

\begin{lem} \label{quasi-split-is-locally-split}
 Any quasi-split monomorphism in an Ab3~category is locally split.
\end{lem}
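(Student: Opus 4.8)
The plan is to prove that the whole morphism $\fm\:\fM\rarrow\C$ producing the quasi-split monomorphism is already locally split, and then to deduce the statement for its image $\K$ from Lemma~\ref{image-of-locally-split}. So I would begin by unwinding the definition of quasi-splitness: the monomorphism $\ii\:\K\rarrow\C$ arises from a direct system of \emph{split} short exact sequences $0\rarrow\K_x\overset{\alpha_x}\rarrow\C\rarrow\D_x\rarrow0$, indexed by a directed poset $X$ and having the constant middle term~$\C$. Here $\fM=\varinjlim_{x\in X}\K_x$, the morphism $\fm\:\fM\rarrow\C$ is the direct limit of the monomorphisms~$\alpha_x$, and $\K=\im(\fm)$. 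In particular, $\fm\fk_x=\alpha_x$ for the canonical morphisms $\fk_x\:\K_x\rarrow\fM$, by the universal property of the direct limit with constant middle term.

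Next I would exploit the splitness of each sequence: for every $x\in X$, choose a retraction $\fr_x\:\C\rarrow\K_x$ with $\fr_x\alpha_x=\id_{\K_x}$. To check that $\fm$ is locally split in the sense of the categorical definition, I take the \emph{same} direct system $(\K_x)_{x\in X}$, the identity epimorphism $\fs=\id_\fM\:\varinjlim_{x\in X}\K_x\rarrow\fM$, and the morphisms $\fg_x=\fk_x\fr_x\:\C\rarrow\fM$. The required identity is then immediate:
$$
 \fg_x\fm\fs\fk_x=\fk_x\fr_x\alpha_x=\fk_x\id_{\K_x}=\fk_x=\fs\fk_x
 \qquad\text{for every }x\in X,
$$
so the morphism~$\fm$ is locally split.

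Finally, I would invoke Lemma~\ref{image-of-locally-split}: since $\fm$ is locally split, its image is a locally split subobject, and that image is precisely~$\K$. Hence the inclusion $\ii\:\K\rarrow\C$ is locally split, which is exactly the local splitness of the quasi-split monomorphism that the lemma asserts.

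I do not expect a genuine obstacle here; the whole content lies in recognizing that the retractions $\fr_x$ supplied by the splittings are exactly the data demanded by the local-splitness definition, with the witnessing system and epimorphism taken to be the tautological ones. The only points needing care are the bookkeeping identity $\fm\fk_x=\alpha_x$ and the passage from the subobject $\K$ back to the morphism $\fm$; the latter is what makes Lemma~\ref{image-of-locally-split} the right tool, since it lets me avoid manipulating the image $\K$ directly and instead argue at the level of the more transparent morphism~$\fm$.
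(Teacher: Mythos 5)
Your proposal is correct and follows essentially the same route as the paper's own proof: witness the local splitness of the full direct-limit morphism $\fm\:\fM\rarrow\C$ by taking the same direct system, the identity epimorphism, and the morphisms $\fg_x=\fk_x\fr_x$ built from the retractions (the paper calls them $\fh_x$, with $\fm\fk_x$ playing the role of your $\alpha_x$), then pass to the image via Lemma~\ref{image-of-locally-split}. The bookkeeping identity $\fm\fk_x=\alpha_x$ that you flag is exactly how the paper writes the split sequences, so there is no difference of substance.
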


\begin{proof}
 Assume that we are given a direct system of split short exact
sequences~\eqref{direct-system-constant-middle}, whose direct limit
is a right exact sequence~\eqref{right-exact-sequence}, and $\K$
is the image of the morphism $\fm\:\fM\rarrow\C$.
 Let $\fk_x\:\K_x\rarrow\fM$ denote the canonical morphism
$\K_x\rarrow\varinjlim_{y\in X}\K_y$, and let $\fh_x\:\C\rarrow\K_x$
be a morphism splitting the short exact sequence $0\rarrow\K_x
\overset{\fm\fk_x}\lrarrow\C\rarrow\D_x\rarrow0$
\eqref{direct-system-constant-middle}.
 Then the direct system $(\K_x)_{x\in X}$, the identity isomorphism
$\varinjlim_{x\in X}\K_x\rarrow\fM$, and the morphisms
$\fg_x=\fk_x\fh_x\:\C\rarrow\fM$ witness the local splitness of
the morphism $\fm\:\fM\rarrow\C$.
 Indeed, we have $\fh_x\fm\fk_x=\id_{\K_x}$ for every $x\in X$,
hence $\fg_x\fm\fk_x=\fk_x\fh_x\fm\fk_x=\fk_x$.
 By Lemma~\ref{image-of-locally-split}, the image $\K$ of a locally
split morphism $\fm\:\fM\rarrow\C$ is a locally split subobject
in $\C$, as desired.
\end{proof}

\begin{lem} \label{R-preserves-quasi-split-locally-split}
 Let\/ $\sA$ and\/ $\sB$ be Ab3~categories, and let $R\:\sB\rarrow\sA$
be a colimit-preserving functor.
 Then \par
\textup{(a)} $R$ takes quasi-split epimorphisms in\/ $\sB$
to quasi-split epimorphisms in\/~$\sA$; \par
\textup{(b)} $R$ takes locally split morphisms in\/ $\sB$ to
locally split morphisms in\/~$\sA$.
\end{lem}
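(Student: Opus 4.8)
The plan is to verify each part directly from the definitions of quasi-split epimorphism and locally split morphism given above, exploiting the hypothesis that $R$ preserves all colimits.

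For part~(a), I would start from a quasi-split epimorphism $\C\rarrow\D$ in $\sB$. By definition this means there is a direct system of split short exact sequences $0\rarrow\K_x\rarrow\C\rarrow\D_x\rarrow0$ (indexed by a directed poset $X$, with constant middle term $\C$) whose direct limit is the right exact sequence $\fM\rarrow\C\rarrow\D\rarrow0$, where $\fM=\varinjlim_x\K_x$ and $\D=\varinjlim_x\D_x$. Applying $R$ to each split short exact sequence $0\rarrow\K_x\rarrow\C\rarrow\D_x\rarrow0$ yields a split short exact sequence $0\rarrow R\K_x\rarrow R\C\rarrow R\D_x\rarrow0$ in $\sA$ (any additive functor preserves split exact sequences, as they are characterized by the existence of splitting morphisms satisfying identities that $R$ preserves). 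These assemble into a direct system indexed by $X$ with constant middle term $R\C$. Since $R$ preserves colimits, the direct limit of this system in $\sA$ is the right exact sequence $R\fM\rarrow R\C\rarrow R\D\rarrow0$ obtained by applying $R$ to $\fM\rarrow\C\rarrow\D\rarrow0$. This is exactly the data exhibiting $R\C\rarrow R\D$ as a quasi-split epimorphism in $\sA$.

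For part~(b), I would take a locally split morphism $\fm\:\fM\rarrow\C$ in $\sB$, witnessed by a direct system $(\K_x)_{x\in X}$, an epimorphism $\fs\:\varinjlim_x\K_x\rarrow\fM$, and morphisms $\fg_x\:\C\rarrow\fM$ satisfying $\fg_x\fm\fs\fk_x=\fs\fk_x$ for every $x$, where $\fk_x\:\K_x\rarrow\varinjlim_y\K_y$ is canonical. The natural candidate for the witnessing data in $\sA$ is the image under $R$ of everything in sight: the direct system $(R\K_x)_{x\in X}$, the morphism $R\fs\:R(\varinjlim_x\K_x)\rarrow R\fM$, and the morphisms $R\fg_x\:R\C\rarrow R\fM$. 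Since $R$ preserves colimits, the canonical morphism $R(\varinjlim_x\K_x)\to\varinjlim_x R\K_x$ is an isomorphism, and under this identification the canonical morphisms $R\fk_x\:R\K_x\rarrow\varinjlim_y R\K_y$ are exactly the images of the $\fk_x$. The morphism $R\fs$ is an epimorphism because $R$ preserves epimorphisms (being a cokernel, epimorphisms are preserved by any colimit-preserving functor). Finally, applying $R$ to the defining equations and using functoriality gives $R\fg_x\cdot R\fm\cdot R\fs\cdot R\fk_x=R\fs\cdot R\fk_x$, which is precisely the local splitness condition for $R\fm$ in $\sA$.

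The only delicate point is the identification of $\varinjlim_x R\K_x$ with $R(\varinjlim_x\K_x)$ and the verification that, under this canonical isomorphism, the structural morphisms $R\fk_x$ and $R\fs$ genuinely play the roles required by the definition; but this is a routine naturality check for a colimit-preserving functor, so I expect no real obstacle. I would write the two parts as short independent arguments, with part~(a) relying on preservation of split exactness and of colimits, and part~(b) relying on preservation of colimits (hence of the canonical colimit comparison isomorphism) together with plain functoriality of the defining equation.

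\begin{proof}
 \textup{(a)} Let $\C\rarrow\D$ be a quasi-split epimorphism in $\sB$,
witnessed by a direct system of split short exact sequences
$0\rarrow\K_x\rarrow\C\rarrow\D_x\rarrow0$
\eqref{direct-system-constant-middle} with constant middle term $\C$,
whose direct limit is the right exact
sequence~\eqref{right-exact-sequence}.
 Applying the additive functor $R$ to each split short exact sequence,
we obtain split short exact sequences
$0\rarrow R\K_x\rarrow R\C\rarrow R\D_x\rarrow0$ in $\sA$, which form
a direct system indexed by the same poset~$X$, with constant middle
term $R\C$.
 Since $R$ preserves colimits, the direct limit of this system is
the right exact sequence $R\fM\rarrow R\C\rarrow R\D\rarrow0$ obtained
by applying $R$ to~\eqref{right-exact-sequence}.
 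This exhibits the morphism $R\C\rarrow R\D$ as a quasi-split
epimorphism in~$\sA$.

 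\textup{(b)} Let $\fm\:\fM\rarrow\C$ be a locally split morphism in
$\sB$, witnessed by a direct system $(\K_x)_{x\in X}$, an epimorphism
$\fs\:\varinjlim_{x\in X}\K_x\rarrow\fM$, and morphisms
$\fg_x\:\C\rarrow\fM$ satisfying $\fg_x\fm\fs\fk_x=\fs\fk_x$ for
every $x\in X$, where $\fk_x\:\K_x\rarrow\varinjlim_{y\in X}\K_y$
is the canonical morphism.
 Consider the direct system $(R\K_x)_{x\in X}$ in $\sA$.
 Since $R$ preserves colimits, the natural morphism
$\varinjlim_{x\in X}R\K_x\rarrow R\bigl(\varinjlim_{x\in X}\K_x\bigr)$
is an isomorphism, and under this identification the canonical
morphisms $\varinjlim_{y\in X}R\K_y$ receive the images $R\fk_x$
of the morphisms~$\fk_x$.
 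The morphism $R\fs\:\varinjlim_{x\in X}R\K_x\rarrow R\fM$ is an
epimorphism, as $R$ preserves epimorphisms.
 Applying $R$ to the equations $\fg_x\fm\fs\fk_x=\fs\fk_x$ and using
functoriality, we get $(R\fg_x)(R\fm)(R\fs)(R\fk_x)=(R\fs)(R\fk_x)$
for every $x\in X$.
 Hence the direct system $(R\K_x)_{x\in X}$, the epimorphism $R\fs$,
and the morphisms $R\fg_x\:R\C\rarrow R\fM$ witness the local
splitness of the morphism $R\fm\:R\fM\rarrow R\C$ in~$\sA$.
\end{proof}
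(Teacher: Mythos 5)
Your proof is correct and takes exactly the route the paper intends: the paper's own proof consists of the single sentence ``Both the assertions follow immediately from the definitions,'' and your argument is precisely that definitional verification spelled out --- applying $R$ to the witnessing data and using that $R$ preserves split exact sequences, colimits (hence the comparison isomorphism $\varinjlim_x R\K_x\simeq R(\varinjlim_x\K_x)$ and the canonical morphisms into the colimit), and epimorphisms.
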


\begin{proof}
 Both the assertions follow immediately from the definitions.
\end{proof}

 The next lemma explains the relevance of the local splitness
property to covers.

\begin{lem} \label{cover-locally-split-kernel}
 Let\/ $\sB$ be an Ab3~category and\/ $\sC\subset\sB$ be a class of
objects.
 Then any\/ $\sC$\+cover with a locally split kernel is
a monomorphism in\/~$\sB$.
 In particular, if an epimorphism with a locally split kernel is
a cover, then it is an isomorphism.
\end{lem}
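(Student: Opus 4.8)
The lemma has two parts. The first asserts that any $\sC$-cover $c\colon Q \rarrow A$ whose kernel is a locally split subobject must be a monomorphism. The second is an immediate consequence: if such a cover is also an epimorphism, then being both mono and epi in an abelian category, it is an isomorphism. So the entire content lies in the first assertion, and the plan is to show that the kernel of a locally split cover must vanish.

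Let me think about how to attack this.

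Let me set up the proof of the main claim. Let $c\colon Q \rarrow A$ be a $\sC$-cover, write $\K = \ker(c)$ for its kernel with inclusion $\ii\colon \K \rarrow Q$, and suppose $\ii$ is locally split. I want to produce an endomorphism $f\colon Q \rarrow Q$ with $cf = c$ that fails to be an automorphism unless $\K = 0$; the cover property then forces $\K = 0$, making $c$ a monomorphism. The natural candidate is an endomorphism of the form $f = \id_Q - \ii g$ for a suitable retraction-like map $g\colon Q \rarrow \K$, since any such $f$ automatically satisfies $cf = c - c\ii g = c$ (because $c\ii = 0$). The defining property of the cover then says $f$ must be an automorphism, and I plan to extract a contradiction with $\K \ne 0$ from this.

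**The main obstacle and how to handle it.** The difficulty is that local splitness in the sense defined here is an "element-free, piecewise" condition: we are given a direct system $(\K_x)_{x\in X}$, an epimorphism $\fs\colon \varinjlim_x \K_x \rarrow \K$, and morphisms $\fg_x\colon Q \rarrow \K$ with $\fg_x \ii \fs \fk_x = \fs \fk_x$ for each $x$, rather than a single global splitting. So I cannot directly build one retraction $g$ splitting $\ii$. The plan is to work with each index $x$ separately: for a fixed $x$, set $g = \fg_x \colon Q \rarrow \K$ and consider $f_x = \id_Q - \ii \fg_x$. This satisfies $c f_x = c$, so by the cover property $f_x$ is an automorphism. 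The key computation is to examine how $f_x$ acts on the image of $\ii \fs \fk_x$: precomposing, $f_x \ii \fs \fk_x = \ii \fs \fk_x - \ii \fg_x \ii \fs \fk_x = \ii \fs \fk_x - \ii \fs \fk_x = 0$, using the local-splitness equation $\fg_x \ii \fs \fk_x = \fs \fk_x$. Thus the automorphism $f_x$ kills the subobject that is the image of $\ii \fs \fk_x$ in $Q$. Since $f_x$ is an automorphism (in particular a monomorphism), this forces $\ii \fs \fk_x = 0$ for every $x \in X$.

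**Closing the argument.** Finally I would assemble the vanishing of all the $\ii \fs \fk_x$ into the vanishing of $\ii$, hence of $\K$. Since $\fk_x \colon \K_x \rarrow \varinjlim_y \K_y$ are the canonical morphisms into a directed colimit, they are jointly epimorphic, so $\ii \fs \fk_x = 0$ for all $x$ implies $\ii \fs = 0$; and because $\fs$ is an epimorphism, this yields $\ii = 0$. As $\ii$ is the inclusion of the kernel, $\K = 0$, so $c$ is a monomorphism, proving the first assertion. The second assertion is then routine: an epimorphism that is a cover with locally split kernel is a monomorphism by what was just shown, hence an isomorphism in the abelian category $\sB$. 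The one point deserving care is the passage from $\ii \fs \fk_x = 0$ for all $x$ to $\ii \fs = 0$, which relies on the canonical morphisms into a directed colimit being collectively epic; this holds in any cocomplete (Ab3) category, so the argument stays within the stated hypotheses.
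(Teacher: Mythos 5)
Your proof is correct and follows essentially the same route as the paper's: you form the endomorphism $\id_Q-\ii\fg_x$, use the cover property to see it is an automorphism, compute that it annihilates $\ii\fs\fk_x$, and conclude the kernel vanishes from the joint epimorphicity of the colimit cocone together with $\fs$ being an epimorphism. The only (immaterial) difference is cosmetic: the paper first cancels the monomorphism $\ii$ to get $\fs\fk_x=0$ and then concludes $\fs=0$, whereas you keep $\ii\fs\fk_x=0$ and cancel at the end.
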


\begin{proof}
 Let $\fq\:\Q\rarrow\B$ be a $\sC$\+cover of an object $\B\in\sB$,
and let $\fj\:\L\rarrow\Q$ be the kernel of~$\fq$.
 Assume that a direct system $(\L_x)_{x\in X}$, an epimorphism
$\ft\:\varinjlim_{x\in X}\L_x\rarrow\L$, and some morphisms
$\fh_x\:\Q\rarrow\L$ witness the local splitness of~$\fj$.
 Let $\fl_x\:\L_x\rarrow\varinjlim_{y\in X}\L_y$ be the canonical
morphism.
 Then we have $\fh_x\fj\ft\fl_x=\ft\fl_x$ for all $x\in X$.
 
 Consider the endomorphism $(\id_\Q-\fj\fh_x)\:\Q\rarrow\Q$.
 We have $\fq(\id_\Q-\fj\fh_x)=\fq$, since $\fq\fj=0$.
 Since $\fq$~is a cover, it follows that $\id_\Q-\fj\fh_x$ is
an automorphism of~$\Q$.
 Now the equations
\begin{equation} \label{locally-split-cover-computation}
 (\id_\Q-\fj\fh_x)\fj\ft\fl_x=
 \fj\ft\fl_x-\fj\fh_x\fj\ft\fl_x=
 \fj\ft\fl_x-\fj\ft\fl_x=0
\end{equation}
imply $\fj\ft\fl_x=0$.
 Since $\fj$~is a monomorphism, it follows that $\ft\fl_x=0$.
 As this holds for all $x\in X$, we can conclude that $\ft=0$.
 Since $\ft$~is an epimorphism by assumption, this means that $\L=0$;
so $\fq$~is a monomorphism.
\end{proof}

\Section{Locally Split Morphisms and Covers in Ab5 Categories}

 In this section, $\sA$ denotes a cocomplete abelian category with
exact direct limits (i.~e., an Ab5~category).

\begin{lem} \label{any-locally-split-is-mono}
 In an Ab5~category, any locally split morphism is a monomorphism.
\end{lem}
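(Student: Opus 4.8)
The plan is to reduce the statement to a single kernel computation that can be carried out termwise and then assembled using the exactness of direct limits, which is the only place the Ab5 hypothesis enters. Adopt the notation of the definition preceding the lemma: let $\fm\:\fM\rarrow\C$ be locally split, witnessed by a direct system $(\K_x)_{x\in X}$ with transition maps $\fc_{yx}$, an epimorphism $\fs\:\L\rarrow\fM$ where $\L=\varinjlim_{x\in X}\K_x$, canonical morphisms $\fk_x\:\K_x\rarrow\L$, and morphisms $\fg_x\:\C\rarrow\fM$ satisfying $\fg_x\fm\fs\fk_x=\fs\fk_x$ for all $x\in X$. Writing $u_x=\fs\fk_x$, the defining equation reads $\fg_x\fm u_x=u_x$. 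First I would observe that, since $\fs$ is an epimorphism, it suffices to prove the equality of subobjects $\ker(\fm\fs)=\ker\fs$ inside $\L$: indeed $\ker(\fm\fs)=\fs^{-1}(\ker\fm)$ and $\ker\fs=\fs^{-1}(0)$, so this equality gives $\fs^{-1}(\ker\fm)=\fs^{-1}(0)$, whence $\ker\fm=\fs(\fs^{-1}(\ker\fm))=\fs(\fs^{-1}(0))=0$ because $\fs$ is epi. The inclusion $\ker\fs\subset\ker(\fm\fs)$ is automatic, so only the reverse inclusion requires work.

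Next I would prove the \emph{local} kernel equality $\ker(\fm u_x)=\ker(u_x)$ for each fixed $x$. One inclusion is trivial; for the other, if $v$ is a morphism with $\fm u_x v=0$, then applying $\fg_x$ and using $\fg_x\fm u_x=u_x$ gives $u_x v=\fg_x\fm u_x v=0$, so $v$ factors through $\ker u_x$. This elementary step is the \emph{only} place where the local-splitness data is used, and it holds in any abelian category.

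Finally I would globalize. The subobjects $(\ker u_x)_{x\in X}$ and $(\ker \fm u_x)_{x\in X}$ are sub-direct-systems of $(\K_x)_{x\in X}$ (the $\fc_{yx}$ restrict, since $u_y\fc_{yx}=u_x$), and by the previous step they coincide term by term, hence coincide as direct systems. Now I use that $\sA$ is Ab5: the short exact sequences $0\rarrow\ker u_x\rarrow\K_x\rarrow\im u_x\rarrow0$ form a direct system of short exact sequences (the maps on images being the inclusions $\im u_x\subset\im u_y$ inside $\fM$), and applying the exact functor $\varinjlim_{x\in X}$ identifies $\ker\fs$ with $\varinjlim_{x\in X}\ker u_x$ as a subobject of $\L$ (the colimit of the right-hand terms being $\im\fs=\fM$, with induced map $\L\rarrow\fM$ equal to $\fs$). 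The same computation with $\fm u_x$ in place of $u_x$ identifies $\ker(\fm\fs)$ with $\varinjlim_{x\in X}\ker(\fm u_x)$. Since the two kernel systems agree, $\ker\fs=\ker(\fm\fs)$, and by the first step $\fm$ is a monomorphism.

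I expect the main obstacle to be the globalization step, precisely the assertion that $\ker(\fm\fs)$ is the directed union $\varinjlim_{x}\ker(\fm u_x)$ of its local pieces. This is exactly what the exactness of direct limits delivers, and it is essential: in a general Ab3 category the passage from the local equalities to the global one breaks down, consistent with the remark preceding the lemma that a locally split morphism need not be monic without Ab5. The routine verifications I would relegate to one line each are that the $\fc_{yx}$ restrict to the kernel subsystems, that the images $\im u_x$ form a directed family with union $\im\fs$, and that the induced colimit map $\L\rarrow\fM$ is indeed $\fs$.
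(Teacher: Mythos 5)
Your proof is correct, and the steps you defer as routine (the transition maps restricting to the kernel subsystems, the identification of $\varinjlim_x\im(sk_x)$ with $\im(s)$, the induced colimit map being~$s$) are indeed routine consequences of Ab5. Your route differs from the paper's only in its packaging, though the engine is identical. The paper tests monomorphy against an arbitrary morphism $\rho\:L\rarrow M$ with $m\rho=0$: it forms the fibered products $L_x=L\sqcap_M K_x$, invokes Ab5 in the form ``fibered products commute with direct limits'' (so that the induced map $t\:\varinjlim_{x}L_x\rarrow L$ is an epimorphism, being a pullback of the epimorphism~$s$), and then runs precisely your local computation --- apply $g_x$ and use $g_xmsk_x=sk_x$ --- to get $\rho t=0$, hence $\rho=0$. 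You instead phrase everything in terms of kernel subobjects: reduce to the equality $\ker(ms)=\ker(s)$ inside $\varinjlim_xK_x$, prove it termwise by the same $g_x$-computation, and globalize by applying the exact direct limit functor to the kernel--image sequences. The two proofs are close cousins: specializing the paper's test morphism $\rho$ to the kernel inclusion $\ker(m)\hookrightarrow M$, the pullback $L_x$ becomes exactly your $\ker(msk_x)$, and the paper's epimorphism~$t$ plays the role of your identification $\ker(ms)=\varinjlim_x\ker(msk_x)$. What the paper's version buys is brevity: no reduction step via $s(s^{-1}({-}))$, no image factorizations, and a single appeal to Ab5. What yours buys is that it isolates explicitly which consequence of Ab5 is needed --- that kernels commute with direct limits --- and thereby makes visible exactly where the argument breaks in a general Ab3 category, in line with the remark preceding the lemma.
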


\begin{proof}
 Let $m\:M\rarrow C$ be a morphism in~$\sA$.
 Assume that a direct system $(K_x)_{x\in X}$, an epimorphism
$s\:\varinjlim_{x\in X}K_x\rarrow M$, and some morphisms
$g_x\:C\rarrow M$ witness the local splitness of~$m$.
 Let $k_x\:K_x\rarrow\varinjlim_{y\in X}K_y$ be the canonical morphism.
 Then the equation $g_xmsk_x=sk_x$ holds for every $x\in X$.
 
 Suppose that $\rho\:L\rarrow M$ is a morphism in~$\sA$ such that
$m\rho=0$.
 Denote by $L_x=L\sqcap_M K_x$ the fibered product of the pair
of morphisms $\rho\:L\rarrow M$ and $sk_x\:K_x\rarrow M$.
 In an Ab5 category, fibered products commute with direct limits; so
we have $\varinjlim_{x\in X}L_x=L\sqcap_M\varinjlim_{x\in X}K_x$.
 Since the morphism $s\:\varinjlim_{x\in X}K_x\rarrow M$ is
an epimorphism, it follows that the natural morphism
$t\:\varinjlim_{x\in X}L_x\rarrow L$ is an epimorphism, too.

 Let $l_x\:L_x\rarrow\varinjlim_{y\in X}L_y$ be the canonical
morphism.
 Then the canonical morphism $L_x\rarrow L$ decomposes as $L_x
\overset{l_x}\rarrow\varinjlim_{y\in X}L_y\overset t\rarrow L$.
 Denote the canonical morphism $L_x\rarrow K_x$ by~$\rho_x$.
 Then the diagram
$$
\begin{diagram}
\node{L_x}\arrow{s,l}{\rho_x}\arrow[2]{e,t}{l_x}
\node[2]{\varinjlim\nolimits_{y\in Y}L_y}
\arrow{s}\arrow[2]{e,t,A}{t} \node[2]{L}\arrow{s,r}{\rho} \\
\node{K_x}\arrow[2]{e,t}{k_x}\node[2]{\varinjlim\nolimits_{y\in Y}K_y}
\arrow[2]{e,t,A}{s} \node[2]{M}
\end{diagram}
$$
is commutative, so we have $sk_x\rho_x=\rho tl_x\:L_x
\rarrow M$ for every $x\in X$.
 Now we can compute that
$$
 \rho tl_x=sk_x\rho_x=g_xmsk_x\rho_x=g_xm\rho tl_x=0,
$$
since $m\rho=0$.
 As this holds for all $x\in X$, it follows that $\rho t=0$.
 Since $t$~is an epimorphism, we can conclude that $\rho=0$.
 Thus $m$~is a monomorphism.
\end{proof}

\begin{lem} \label{direct-summand-of-locally-split}
 In an Ab5~category, any direct summand of a locally split
monomorphism is a locally split monomorphism.
\end{lem}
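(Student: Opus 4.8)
The plan is to reduce everything to the witnessing data of the given locally split monomorphism and transport that data along a pullback. Write the locally split monomorphism as $m\:M\rarrow C$, and suppose $M_1$ is a direct summand of $M$, with inclusion $\iota\:M_1\rarrow M$ and retraction $\pi\:M\rarrow M_1$ satisfying $\pi\iota=\id_{M_1}$. The goal is to show that the composite $m_1=m\iota\:M_1\rarrow C$ is locally split (it is automatically a monomorphism by Lemma~\ref{any-locally-split-is-mono}). Fix witnessing data for $m$: a direct system $(K_x)_{x\in X}$, an epimorphism $s\:\varinjlim_{x\in X}K_x\rarrow M$, and morphisms $g_x\:C\rarrow M$ with $g_xmsk_x=sk_x$ for all $x$, where $k_x\:K_x\rarrow\varinjlim_{y\in X}K_y$ is the canonical morphism.

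For each $x\in X$ I would form the fibered product $L_x=K_x\sqcap_M M_1$ of the two morphisms $sk_x\:K_x\rarrow M$ and $\iota\:M_1\rarrow M$, with projections $\rho_x\:L_x\rarrow K_x$ and $\sigma_x\:L_x\rarrow M_1$ satisfying $sk_x\rho_x=\iota\sigma_x$. The transition morphisms of the system $(K_x)$ induce, by the universal property of the pullback, transition morphisms making $(L_x)_{x\in X}$ a direct system. Here is the key step, and the place where the Ab5 hypothesis is essential: since in an Ab5 category fibered products commute with direct limits (as already used in the proof of Lemma~\ref{any-locally-split-is-mono}), one has $\varinjlim_{x\in X}L_x=\bigl(\varinjlim_{x\in X}K_x\bigr)\sqcap_M M_1$. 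Because $s$ is an epimorphism, its pullback $t\:\varinjlim_{x\in X}L_x\rarrow M_1$ along $\iota$ is again an epimorphism, and it satisfies $tl_x=\sigma_x$, where $l_x\:L_x\rarrow\varinjlim_{y\in X}L_y$ is the canonical morphism.

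It then remains to produce the splitting morphisms, and this is a purely formal computation. I would set $\tilde g_x=\pi g_x\:C\rarrow M_1$ and verify the local-splitness equation for $m_1$ as
\begin{multline*}
 \tilde g_x\,m_1\,tl_x=\pi g_x m\iota\sigma_x=\pi g_x m\,sk_x\rho_x \\
 =\pi(g_xmsk_x)\rho_x=\pi sk_x\rho_x=\pi\iota\sigma_x=\sigma_x=tl_x,
\end{multline*}
using the pullback identity $\iota\sigma_x=sk_x\rho_x$, the relation $g_xmsk_x=sk_x$, and $\pi\iota=\id_{M_1}$. Thus the direct system $(L_x)_{x\in X}$, the epimorphism $t$, and the morphisms $\tilde g_x$ witness the local splitness of $m_1$.

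The only nontrivial point is the interchange of the fibered product with the direct limit in the second paragraph, which is exactly what forces the Ab5 assumption; everything else is the universal property of pullbacks together with the formal computation above. If one prefers the arrow-category formulation, where a direct summand $C_1$ of the target is also split off and $m$ is block diagonal, the same data works after replacing $\tilde g_x$ by its precomposite $\tilde g_x\iota_{C_1}\:C_1\rarrow M_1$, so no further interchange argument is needed.
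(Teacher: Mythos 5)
Your proof is correct and follows essentially the same route as the paper's: form the fibered product of the witnessing epimorphism $s$ with the splitting of the domain, use Ab5 to commute that fibered product with the direct limit, and conjugate the morphisms $g_x$ by the retraction and (in the arrow-category case) the codomain inclusion, exactly as the paper does with $h_x=\lambda g_x\gamma$. The only point worth noting is that the version actually invoked later (in Corollary~\ref{ab5-locally-split-precover-and-a-cover-cor}) is the arrow-category one, where the codomain splits as well; the paper's proof treats that case from the start, and your closing paragraph correctly reduces it to your main computation.
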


\begin{proof}
 Let $i\:K\rarrow C$ be a monomorphism in~$\sA$.
 Assume that a direct system $(K_x)_{x\in X}$, an epimorphism
$s\:\varinjlim_{x\in X}K_x\rarrow K$, and some morphisms
$g_x\:C\rarrow K$ witness the local splitness of~$i$.
 Let $k_x\:K_x\rarrow\varinjlim_{y\in X}K_y$ be the canonical morphism.
 Then the equation $g_xisk_x=sk_x$ holds for every $x\in X$.
 
 Suppose that a (mono)morphism $j\:L\rarrow Q$ in $\sA$ is a direct
summand of the monomorphism~$i$.
 Then we have a commutative diagram
\begin{equation} \label{direct-summand-morphism}
\begin{diagram}
\node{L}\arrow{e,t}{j}\arrow{s,l}{\rho}
\node{Q}\arrow{s,l}{\gamma} \\
\node{K}\arrow{e,t}{i}\arrow{s,l}{\lambda}
\node{C}\arrow{s,l}{\beta} \\
\node{L}\arrow{e,t}{j} \node{Q}
\end{diagram}
\end{equation}
where both the vertical compositions are identity morphisms.

 Denote by $L_x=L\sqcap_KK_x$ the fibered product of the pair of
morphisms $\rho\:L\rarrow K$ and $sk_x\:K_x\rarrow K$.
 As in the previous proof, we have $\varinjlim_{x\in X}L_x=
L\sqcap_K\varinjlim_{x\in X}K_x$, since $\sA$ is an Ab5~category.
 The natural morphism $t\:\varinjlim_{x\in X}L_x\rarrow L$ is
an epimorphism, since the morphism~$s$ is.

 Following the notation of the previous proof, the canonical
morphism $L_x\rarrow L$ decomposes as $L_x\overset{l_x}\rarrow
\varinjlim_{y\in X}L_y\overset t\rarrow L$.
 Denote the canonical morphism $L_x\rarrow K_x$ by~$\rho_x$.
 Similarly to the previous proof, we have $sk_x\rho_x=\rho tl_x\:
L_x\rarrow K$.

 For every $x\in X$, denote by $h_x\:Q\rarrow L$ the composition
\begin{equation} \label{producing-h-out-of-g}
 Q\overset\gamma\lrarrow C\overset{g_x}\lrarrow K\overset\lambda
 \lrarrow L.
\end{equation}
 We claim that the direct system $(L_x)_{x\in X}$,
the epimorphism $t\:\varinjlim_{x\in X}L_x\rarrow L$, and
the morphisms $h_x\:Q\rarrow L$ witness the local splitness of
the morphism~$j$.
 Indeed,
\begin{equation} \label{direct-summand-witnessing-computation}
 h_xjtl_x=\lambda g_x\gamma jtl_x=\lambda g_xi\rho tl_x
 =\lambda g_xisk_x\rho_x=\lambda sk_x\rho_x=
 \lambda\rho tl_x=tl_x,
\end{equation}
since $\gamma j=i\rho$ and $\lambda\rho=\id_L$.
\end{proof}

\begin{cor} \label{ab5-locally-split-precover-and-a-cover-cor}
 Let\/ $\sA$ be an Ab5~category and\/ $\sC\subset\sA$ be a class
of objects.
 Let $p\:C\rarrow D$ be an epimorphism in\/~$\sA$.
 Assume that
\begin{enumerate}
\item the morphism~$p$ is a\/ $\sC$\+precover with a locally split
kernel; \par
\item the object $D\in\sA$ has a\/ $\sC$\+cover.
\end{enumerate}
 Then $D\in\sC$ and the epimorphism~$p$ is split.
\end{cor}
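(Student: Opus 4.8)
The plan is to use hypothesis~(2) to bring in an actual $\sC$\+cover $q\colon Q\rarrow D$ alongside the precover~$p$, to transport the local splitness of $\ker(p)$ onto $\ker(q)$, and then to apply Lemma~\ref{cover-locally-split-kernel} to collapse the cover to an isomorphism; the splitting of~$p$ will then fall out at once.

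First I would fix a $\sC$\+cover $q\colon Q\rarrow D$. Since $Q\in\sC$ and $p$ is a $\sC$\+precover, there is a morphism $f\colon Q\rarrow C$ with $pf=q$; and since $C\in\sC$ while the cover $q$ is in particular a precover, there is a morphism $g\colon C\rarrow Q$ with $qg=p$. Then $q(gf)=pf=q$, so the defining property of a cover forces $u=gf$ to be an automorphism of $Q$. From $qu=q$ one gets $qu^{-1}=q$, so after replacing $g$ by $u^{-1}g$ I may assume $gf=\id_Q$ while keeping $qg=p$. I also record that $q$ is an epimorphism, since the epimorphism $p=qg$ factors through it.

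The heart of the argument is to show that $\ker(q)$ is a locally split subobject of $Q$. The equalities $qg=p$ and $pf=q$ say that $g$ maps $\ker(p)$ into $\ker(q)$ and $f$ maps $\ker(q)$ into $\ker(p)$, and together with $gf=\id_Q$ these restrictions exhibit the inclusion $\ker(q)\rarrow Q$ as a direct summand of the inclusion $\ker(p)\rarrow C$, in the precise sense required by Lemma~\ref{direct-summand-of-locally-split} (the two retractions being $f$, $g$ on the ambient objects and their restrictions on the kernels). Since $\ker(p)\rarrow C$ is locally split by hypothesis~(1), that lemma yields that $\ker(q)\rarrow Q$ is a locally split monomorphism. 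This transport step is where I expect the real work to lie, and it is the point at which the Ab5 hypothesis genuinely enters, through Lemma~\ref{direct-summand-of-locally-split}.

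With a $\sC$\+cover $q$ whose kernel is locally split in hand, Lemma~\ref{cover-locally-split-kernel} shows that $q$ is a monomorphism; being also an epimorphism, $q$ is an isomorphism, whence $D\cong Q\in\sC$. This proves the first claim. For the second, the composite $fq^{-1}\colon D\rarrow C$ satisfies $p(fq^{-1})=qq^{-1}=\id_D$, so it is a section of $p$ and $p$ is a split epimorphism, as desired.
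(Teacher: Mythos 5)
Your proof is correct and takes essentially the same route as the paper's: both realize the cover $q\colon Q\rarrow D$ as a direct summand of the precover $p$ (the paper invokes this as the standard fact that a cover is a direct summand of any precover of the same object, while you spell out the retraction argument with $f$, $g$, and the adjustment $g\mapsto u^{-1}g$), then apply Lemma~\ref{direct-summand-of-locally-split} to transport local splitness from $\ker(p)$ to $\ker(q)$ and Lemma~\ref{cover-locally-split-kernel} to conclude that $q$ is an isomorphism and $p$ splits.
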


\begin{proof}
 Let $q\:Q\rarrow D$ be a $\sC$\+cover of~$D$.
 Since $C\in\sC$ and $p$~is an epimorphism, the morphism~$q$ is
an epimorphism, too.
 Let $i\:K\rarrow C$ and $j\:L\rarrow Q$ be the kernels of
the morphisms $p$ and~$q$, respectively.
 Since $p$~is a $\sC$\+precover and $q$~is a $\sC$\+cover,
the short exact sequence $0\rarrow L\overset j\rarrow Q\overset
q\rarrow D\rarrow0$ is a direct summand of the short exact sequence
$0\rarrow K\overset i\rarrow C\overset p\rarrow D\rarrow0$.

 In particular, the monomorphism~$j$ is a direct summand of
the monomorphism~$i$.
 By assumption, the morphism~$i$ is locally split.
 Applying Lemma~\ref{direct-summand-of-locally-split}, we conclude
that the morphism~$j$ is locally split.
 By Lemma~\ref{cover-locally-split-kernel}, it follows that $L=0$
and $q$~is an isomorphism.
 Consequently, $D\in\sC$ and the epimorphism~$p$ is split.
\end{proof}

\begin{thm} \label{ab5-direct-limit-precover-cover-thm}
 Let\/ $\sA$ be an Ab5~category and\/ $\sC\subset\sA$ be a class of
objects closed under coproducts and direct summands.
 Let $(c_{y,x}\:C_x\to C_y)_{x<y\in X}$ be a diagram of objects
$C_x\in\sC$, indexed by a directed poset $X$, and
let $D=\varinjlim_{x\in X}^\sA C_x$ be its direct limit in
the category\/~$\sA$.
 Let $p\:\coprod_{x\in X}C_x\rarrow D$ be the natural epimorphism.
 Assume that
\begin{enumerate}
\item the morphism~$p$ is a\/ $\sC$\+precover in\/~$\sA$;
\item the object $D$ has a\/ $\sC$\+cover in\/~$\sA$.
\end{enumerate}
 Then $D\in\sC$ and the epimorphism~$p$ is split.
\end{thm}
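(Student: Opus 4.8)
The plan is to recognize that this theorem follows almost immediately from the machinery assembled in the preceding results, once the objects are correctly identified. I would begin by setting $C=\coprod_{x\in X}C_x$. Since the class $\sC$ is closed under coproducts by hypothesis, we have $C\in\sC$, so that $p\:C\rarrow D$ is genuinely an epimorphism out of an object of~$\sC$, and assumption~(1) of the theorem says precisely that $p$ is a $\sC$\+precover.

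The crux is to show that the kernel of~$p$ is a locally split subobject of~$C$. For this I would first invoke Proposition~\ref{direct-limit-epi-quasi-split}, applied with $\sB=\sA$, which tells us exactly that the natural epimorphism $p\:\coprod_{x\in X}C_x\rarrow D$ onto the direct limit is quasi-split. By the definition of a quasi-split epimorphism, the kernel inclusion $K\hookrightarrow C$ is then a quasi-split monomorphism. At this point Lemma~\ref{quasi-split-is-locally-split} upgrades the quasi-split monomorphism to a locally split one, so that the kernel of~$p$ is a locally split subobject of~$C$.

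With both ingredients in place, the proof concludes by a direct appeal to Corollary~\ref{ab5-locally-split-precover-and-a-cover-cor}. Its two hypotheses are now verified: condition~(1) of the corollary requires $p$ to be a $\sC$\+precover with a locally split kernel, which holds by assumption~(1) of the theorem together with the local splitness just established; condition~(2) requires $D$ to have a $\sC$\+cover, which is exactly assumption~(2) of the theorem. The corollary then delivers $D\in\sC$ and the splitness of~$p$, which is the desired conclusion.

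Because essentially all the real content has been front-loaded into Proposition~\ref{direct-limit-epi-quasi-split}, Lemma~\ref{quasi-split-is-locally-split}, and Corollary~\ref{ab5-locally-split-precover-and-a-cover-cor}, I do not anticipate any serious obstacle in assembling them. The only point demanding a little care is the bookkeeping of roles: one must confirm that $C=\coprod_{x\in X}C_x$ is the object playing the part of~$C$ in the corollary, and that quasi-splitness of the \emph{epimorphism}~$p$ is correctly transported into local splitness of its \emph{kernel} through the intermediate notion of a quasi-split monomorphism. This is purely a matter of tracing definitions, so the argument should go through smoothly.
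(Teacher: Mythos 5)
Your proposal is correct and follows exactly the paper's own proof: Proposition~\ref{direct-limit-epi-quasi-split} gives quasi-splitness of~$p$, Lemma~\ref{quasi-split-is-locally-split} converts the quasi-split kernel monomorphism into a locally split one, and Corollary~\ref{ab5-locally-split-precover-and-a-cover-cor} finishes the argument. The bookkeeping points you flag (closure of $\sC$ under coproducts making $p$ a map out of an object of $\sC$, and passing from the quasi-split epimorphism to its kernel) are handled the same way in the paper.
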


\begin{proof}
 By Proposition~\ref{direct-limit-epi-quasi-split}, the epimorphism~$p$
is quasi-split; so its kernel~$i$ is a quasi-split monomorphism.
 According to Lemma~\ref{quasi-split-is-locally-split}, it follows that
the morphism~$i$ is locally split.
 Hence Corollary~\ref{ab5-locally-split-precover-and-a-cover-cor} is
applicable, and we are done.
\end{proof}

\Section{Locally Split Morphisms and Covers in Ab3 Categories}
\label{ab3-secn}

 In this section we consider an Ab3~category $\sB$, an Ab5~category
$\sA$, and a functor $R\:\sB\rarrow\sA$ preserving all colimits.
 Equivalently, $R$ is a right exact functor preserving coproducts.
 Any such functor is additive.
 We will denote the functor $R$ by $\C\longmapsto\ov\C$ for brevity.

 Following~\cite[Section~8]{BP3}, we say that a short exact sequence
$0\rarrow\K\rarrow\C\rarrow\D\rarrow0$ in the category $\sB$ is
\emph{functor pure} (or \emph{f\+pure} for brevity) if, for any
Ab5~category $\sA$ and any colimit-preserving functor
$R\:\sB\rarrow\sA$, the short sequence $0\rarrow R(\K)\rarrow R(\C)
\rarrow R(\D)\rarrow0$ is exact in~$\sA$.
 Equivalently, this means that the morphism $\ov\K\rarrow\ov\C$ is
a monomorphism.

 If a short exact sequence $0\rarrow\K\rarrow\C\rarrow\D\rarrow0$
is f\+pure, we will say that $\K\rarrow\C$ is
an \emph{f\+pure monomorphism} and $\C\rarrow\D$ is
an \emph{f\+pure epimorphism}.

\begin{lem} \label{direct-limit-f-pure-epi}
 For any cocomplete abelian category\/ $\sB$, the class of all
f\+pure epimorphisms in\/ $\sB$ is closed under direct limits.
\end{lem}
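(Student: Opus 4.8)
The plan is to unwind the definition directly: I would fix an arbitrary Ab5 category $\sA$ and a colimit\+preserving functor $R\colon\sB\to\sA$, written $\C\mapsto\ov\C$ as in the running notation, and show that $R$ sends the kernel inclusion of the direct limit epimorphism to a monomorphism. Concretely, let $(p_x\colon\C_x\to\D_x)_{x\in X}$ be a direct system of f\+pure epimorphisms over a directed poset $X$, with kernels $\K_x=\ker(p_x)$, and let $p=\varinjlim_x p_x\colon\C\to\D$ be its direct limit. Because directed colimits are right exact in the Ab3 category $\sB$, the map $p$ is an epimorphism, and the colimit of the short exact sequences $0\to\K_x\to\C_x\to\D_x\to0$ is a right exact sequence $\varinjlim_x\K_x\overset m\to\C\overset p\to\D\to0$. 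Hence $\K=\ker(p)$ is the \emph{image} of $m$, and I would record the epi\+mono factorization $m=\iota e$, where $e\colon\varinjlim_x\K_x\to\K$ is an epimorphism and $\iota\colon\K\to\C$ is the kernel inclusion. The target is then to prove $R(\iota)$ is a monomorphism.

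The key step will be a kernel computation carried out inside $\sA$. Since each $p_x$ is f\+pure, $0\to\ov{\K_x}\to\ov{\C_x}\to\ov{\D_x}\to0$ is exact, so $\ker(\ov{p_x})=\ov{\K_x}$. As $R$ preserves colimits one has $\ov p=\varinjlim_x\ov{p_x}$, and because $\sA$ is Ab5 its directed colimits commute with kernels, which gives
$$
\ker(\ov p)=\varinjlim_x\ker(\ov{p_x})=\varinjlim_x\ov{\K_x}=\ov{\varinjlim_x\K_x},
$$
the inclusion into $\ov\C$ being exactly $\ov m$. In particular $\ov m$ is a monomorphism.

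Finally I would extract the claim from the factorization $\ov m=\ov\iota\,\ov e$. Right exactness of $R$ makes $\ov e$ an epimorphism; since the composite $\ov m=\ov\iota\,\ov e$ is a monomorphism, $\ov e$ is a monomorphism as well, hence $\ov e$ is an isomorphism in the abelian category $\sA$, and therefore $\ov\iota=\ov m\,\ov e^{-1}$ is a monomorphism. As $\sA$ and $R$ were arbitrary, this shows $\iota\colon\K\to\C$ is an f\+pure monomorphism, i.e.\ $p$ is an f\+pure epimorphism.

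I expect the only real obstacle to be the discrepancy, living in the Ab3 category $\sB$, between $\varinjlim_x\K_x$ and the genuine kernel $\K=\ker(p)$: colimits are not exact in $\sB$, so a priori $\K$ is merely the image of $\varinjlim_x\K_x\to\C$. The resolution is to transport the problem into the Ab5 category $\sA$ via $R$, compute the kernel there (where colimits are exact), and then recover the monomorphy of $\ov\iota$ from that of $\ov m$ through the mono\+epi\+iso argument; the remaining ingredients (right exactness of $R$, functoriality of kernels, and exactness of directed colimits in $\sA$) are routine.
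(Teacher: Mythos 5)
Your proposal is correct and follows essentially the same route as the paper's proof: pass to the right exact direct-limit sequence in $\sB$, factor the map $\varinjlim_x\K_x\to\C$ through the genuine kernel $\K$, apply $R$, use f\+purity of each term plus Ab5 exactness in $\sA$ to see that $R$ of the composite is a monomorphism, and then conclude via the epi--mono--iso argument that $R(\iota)$ is a monomorphism. The only cosmetic difference is that you phrase the key exactness step as ``directed colimits commute with kernels in $\sA$,'' while the paper takes the direct limit of the short exact sequences $0\to\ov\K_x\to\ov\B_x\to\ov\C_x\to0$ in $\sA$; these are the same use of Ab5.
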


\begin{proof}
 The class of all epimorphisms is closed under all colimits in any
cocomplete category.
 In our context, we need to prove a similar property for the direct
limits of f\+pure epimorphisms in~$\sB$.

 Let $X$ be a directed poset, and let
$$
(\fp_x)_{x\in X}\:(\fb_{y,x}\:\B_x\to\B_y)_{x<y\in X}\lrarrow
(\fc_{y,x}\:\C_x\to\C_y)_{x<y\in X}
$$
be a morphism of $X$\+indexed diagrams in $\sB$ such that the morphism
$\fp_x\:\B_x\rarrow\C_x$ is an f\+pure epimorphism for every index
$x\in X$.
 Put $\K_x=\ker(\fp_x)$; so we have a short exact sequence of
diagrams
\begin{equation} \label{contramodule-short-exact-sequence-of-diagrams}
 0\lrarrow(\K_x)_{x\in X}\overset{(\ii_x)}\lrarrow(\B_x)_{x\in X}
 \overset{(\fp_x)}\lrarrow(\C_x)_{x\in X}\lrarrow0
\end{equation}
such that the short exact sequence of objects $0\rarrow\K_x\rarrow
\B_x\rarrow\C_x\rarrow0$ is f\+pure exact in $\sB$ for every $x\in X$.

 In any cocomplete abelian category, all colimit functors are right
exact.
 Passing to the direct limit
of~\eqref{contramodule-short-exact-sequence-of-diagrams}, we obtain
a right exact sequence
\begin{equation} \label{contramodule-direct-limit-sequence}
 \fM\overset\fm\lrarrow\B\overset\fp\lrarrow\C\lrarrow0,
\end{equation}
where $\fM=\varinjlim_{x\in X}\K_x$, \ $\B=\varinjlim_{x\in X}\B_x$,
and $\C=\varinjlim_{x\in X}\C_x$.
 Denote by $\ii\:\K\rarrow\B$ the kernel of the epimorphism~$\fp$.
 Then the object $\K$ is also the image of the morphism~$\fm$,
which factorizes into the compostion $\fm=\ii\fn$ of an epimorphism
$\fn\:\fM\rarrow\K$ and the monomorphism $\ii\:\K\rarrow\B$.

 We need to prove that the morphism $R(\ii)=\bar\ii\:\ov\K\rarrow\ov\B$
is a monomorphism in~$\sA$.
 By assumption, the functor $R$ preserves exactness of the short
sequences~\eqref{contramodule-short-exact-sequence-of-diagrams};
so we get a short exact sequence of diagrams
\begin{equation} \label{short-exact-sequence-of-diagrams}
 0\lrarrow(\ov\K_x)_{x\in X}\overset{(\bar\ii_x)}\lrarrow
 (\ov\B_x)_{x\in X}\overset{(\bar\fp_x)}\lrarrow(\ov\C_x)_{x\in X}
 \lrarrow0
\end{equation}
in the category~$\sA$.
 Direct limits are exact in $\sA$; so passing to the direct limit
of~\eqref{short-exact-sequence-of-diagrams} we obtain a short exact
sequence
\begin{equation} \label{direct-limit-sequence}
 0\lrarrow K\overset i\lrarrow B\overset p\lrarrow C\lrarrow0
\end{equation}
in the category~$\sA$.
 Furthermore, the functor $R$ preserves direct limits; so it takes
the right exact sequence~\eqref{contramodule-direct-limit-sequence}
to the exact sequence~\eqref{direct-limit-sequence}.

 We have shown that $K=\ov\fM$, \ $B=\ov\B$, and the morphism
$\ov\fm=i\:K\rarrow B$ is a monomorphism.
 It remains to recall that the morphism~$\fm$ decomposes as
$\fm=\ii\fn$, where $\fn\:\fM\rarrow\K$ is an epimorphism and
$\ii\:\K\rarrow\B$ is a monomorphism.
 The right exact functor $R$ takes epimorphisms to epimorphisms,
so $\bar\fn\:\ov\fM\rarrow\ov\K$ is an epimorphism.
 As the composition $\ov\fm=\bar\ii\bar\fn$ is a monomorphism, it
follows that $\bar\fn$~is an isomorphism and $\bar\ii$~is
a monomorphism.
 Thus we have $\ov\K=K$ and the functor $R$ takes the short exact
sequence
\begin{equation} \label{contramodule-direct-limit-kernel-sequence}
 0\lrarrow \K\overset\ii\lrarrow\B\overset\fp\lrarrow\C\lrarrow0
\end{equation}
in the category $\sB$ to the short exact
sequence~\eqref{direct-limit-sequence} in the category~$\sA$.
 So $\fp\:\B\rarrow\C$ is an f\+pure epimorphism.
\end{proof}

\begin{lem} \label{locally-split-f-pure}
 In any cocomplete abelian category, the cokernel of
any locally split morphism is an f\+pure epimorphism.
 Any locally split monomorphism is f\+pure.
\end{lem}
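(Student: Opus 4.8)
The plan is to reduce both assertions to the two facts already at our disposal: that a colimit-preserving functor carries locally split morphisms to locally split morphisms (Lemma~\ref{R-preserves-quasi-split-locally-split}(b)), and that in an Ab5~category every locally split morphism is a monomorphism (Lemma~\ref{any-locally-split-is-mono}).

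First I would treat the cokernel statement. Let $\fm\:\fM\rarrow\C$ be a locally split morphism in $\sB$, let $\K=\im(\fm)\subset\C$ with inclusion $\ii\:\K\rarrow\C$, and let $\C\rarrow\D$ be the cokernel of~$\fm$. Since in an abelian category the image of $\fm$ is the kernel of its cokernel, the sequence $0\rarrow\K\overset\ii\rarrow\C\rarrow\D\rarrow0$ is short exact in~$\sB$. By the characterization of f\+purity recalled above, to show that $\C\rarrow\D$ is an f\+pure epimorphism it suffices to prove that, for every Ab5~category $\sA$ and every colimit-preserving functor $R\:\sB\rarrow\sA$, the morphism $\ov\ii=R(\ii)\:\ov\K\rarrow\ov\C$ is a monomorphism.

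Here I would invoke Lemma~\ref{image-of-locally-split}: the image $\K\subset\C$ of the locally split morphism~$\fm$ is a locally split subobject, so $\ii$~is a locally split monomorphism. Applying Lemma~\ref{R-preserves-quasi-split-locally-split}(b), the morphism $\ov\ii$ is locally split in the Ab5~category~$\sA$, and then Lemma~\ref{any-locally-split-is-mono} forces $\ov\ii$ to be a monomorphism. This is exactly the required condition, so the sequence $0\rarrow\K\rarrow\C\rarrow\D\rarrow0$ is f\+pure and $\C\rarrow\D$ is an f\+pure epimorphism.

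The second statement then follows at once: a locally split monomorphism $\ii\:\K\rarrow\C$ coincides with its own image, so the first part, applied to $\fm=\ii$, shows that the cokernel $\C\rarrow\D$ of~$\ii$ is an f\+pure epimorphism; equivalently, the short exact sequence $0\rarrow\K\rarrow\C\rarrow\D\rarrow0$ is f\+pure, i.e.\ $\ii$~is an f\+pure monomorphism. The one point that needs care is that $R$ is only right exact and need not preserve kernels or images, so one cannot simply identify $\ov\K$ with the image of $\ov\fm$ inside $\ov\C$; the entire argument rests instead on transporting local splitness through $R$ and exploiting that in the Ab5~target local splitness already entails being a monomorphism. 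I expect this to be the only subtle step.
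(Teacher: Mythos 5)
Your proof is correct and follows essentially the same route as the paper's: factor the locally split morphism through its image via Lemma~\ref{image-of-locally-split}, transport local splitness of the image inclusion through the colimit-preserving functor $R$ by Lemma~\ref{R-preserves-quasi-split-locally-split}(b), and conclude monomorphy in the Ab5~target from Lemma~\ref{any-locally-split-is-mono}, which is exactly the characterization of f\+purity. The subtle point you flag (that $R$ need not preserve images, so one argues via local splitness rather than identifying $\ov\K$ with $\im(\ov\fm)$) is precisely the point the paper's argument is built around.
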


\begin{proof}
 Let $\fm\:\fM\rarrow\C$ be a locally split morphism in~$\sB$,
and let $\fp\:\C\rarrow\D$ be the cokernel of~$\fm$.
 Denote by $\K$ the image of the morphism~$\fm$.
 Then the morphism $\fm$~decomposes as $\fM\overset\fn\rarrow\K
\overset\ii\rarrow\C$, where $\fn$~is an epimorphism and
$\ii$~is a monomorphism.
 By Lemma~\ref{image-of-locally-split}, the morphism~$\ii$ is
locally split.

 Applying the functor $R$, we get the morphism $\ov\fm=\bar\ii
\bar\fn$, where $\bar\fn$ is an epimorphism.
 By Lemma~\ref{R-preserves-quasi-split-locally-split}(b), both
the morphisms $\ov\fm$ and~$\bar\ii$ are locally split in~$\sA$.
 By Lemma~\ref{any-locally-split-is-mono}, both the morphisms
$\ov\fm$ and~$\bar\ii$ are monomorphisms.

 We have shown that the short exact sequence $0\rarrow\K\overset\ii
\rarrow\C\overset\fp\rarrow\D\rarrow0$ is f\+pure in $\sB$, so
$\fp$~is an f\+pure epimorphism and $\ii$~is an f\+pure monomorphism.
 We have also shown that the morphism $\bar\fn\:\ov\fM\rarrow\ov\K$
is an isomorphism in~$\sA$.
\end{proof}

\begin{cor} \label{quasi-split-functor-pure}
 In any cocomplete abelian category, any quasi-split exact sequence
(quasi-split epimorphism, or quasi-split monomorphism) is functor pure.
\end{cor}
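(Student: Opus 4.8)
The plan is to read the statement off directly from the two preceding lemmas, since by this point in the section all the genuine work has been done. First I would unwind the definitions: to say that a quasi-split short exact sequence $0\rarrow\K\rarrow\C\rarrow\D\rarrow0$ is functor pure amounts, by the definition of f\+purity, to showing that the monomorphism $\K\rarrow\C$ is f\+pure — equivalently, that $\ov\K\rarrow\ov\C$ is a monomorphism for every colimit-preserving functor $R\colon\sB\rarrow\sA$ into an Ab5~category. Once the monomorphism part is settled, the assertions about the quasi-split epimorphism $\C\rarrow\D$ and about the whole sequence come for free, because f\+purity of the monomorphism, of the epimorphism, and of the short exact sequence are by definition three names for one and the same condition on that sequence.

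So the key step is to treat the quasi-split monomorphism. Here I would invoke Lemma~\ref{quasi-split-is-locally-split}, which states that any quasi-split monomorphism in an Ab3~category is locally split, and then apply the second assertion of Lemma~\ref{locally-split-f-pure}, namely that any locally split monomorphism in a cocomplete abelian category is f\+pure. Chaining these two inclusions of classes — quasi-split monomorphisms $\subset$ locally split monomorphisms $\subset$ f\+pure monomorphisms — yields that the quasi-split monomorphism $\K\rarrow\C$ is f\+pure, which is exactly what f\+purity of the short exact sequence requires.

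The main obstacle is really that there is none: the corollary is a formal consequence of Lemmas~\ref{quasi-split-is-locally-split} and~\ref{locally-split-f-pure}, with the substantive content already absorbed into those two results (the passages quasi-split $\Rightarrow$ locally split, and locally split $\Rightarrow$ f\+pure). The only point that demands a little care is the bookkeeping of the three equivalent formulations — sequence, monomorphism, epimorphism — so that citing the monomorphism case is seen to legitimately cover all three claims made in the statement.
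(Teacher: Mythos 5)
Your proof is correct and coincides with the paper's own argument: the paper gives two routes, and yours is exactly its second (``alternative'') one, chaining Lemma~\ref{quasi-split-is-locally-split} (quasi-split monomorphisms are locally split) with the second assertion of Lemma~\ref{locally-split-f-pure} (locally split monomorphisms are f\+pure), together with the observation that f\+purity of the monomorphism, the epimorphism, and the sequence are the same condition. The paper's first route instead applies Lemma~\ref{direct-limit-f-pure-epi} to view a quasi-split epimorphism as a direct limit of split (hence f\+pure) epimorphisms, but both arguments are equally valid.
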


\begin{proof}
 By Lemma~\ref{direct-limit-f-pure-epi}, any direct limit of f\+pure
epimorphisms is an f\+pure epimorphism.
 In particular, any direct limit of split epimorphisms is an f\+pure
epimorphism.
 It follows that any quasi-split epimorphism is f\+pure.
 
 Alternatively, by Lemma~\ref{locally-split-f-pure}, any locally split
monomorphism is f\+pure.
 By Lemma~\ref{quasi-split-is-locally-split}, any quasi-split
monomorphism is locally split.
 Thus any quasi-split monomorphism is f\+pure.
\end{proof}

 The next propostion extends the result of
Corollary~\ref{ab5-locally-split-precover-and-a-cover-cor} to abelian
categories with nonexact direct limits.

\begin{prop} \label{ab3-locally-split-precover-and-cover-prop}
 Let\/ $\sB$ be an Ab3~category and\/ $\sC\subset\sB$ be a class
of objects.
 Let\/ $\fq\:\Q\rarrow\D$ be a\/ $\sC$\+cover in\/~$\sB$.
 Put\/ $\L=\ker(\fq)$.
 Assume that the object\/ $\D\in\sB$ has a\/ $\sC$\+precover with
a locally split kernel.
 Then, for any Ab5~category\/ $\sA$ and any colimit-preserving
functor $R\:\sB\rarrow\sA$, one has $R(\L)=0$.
\end{prop}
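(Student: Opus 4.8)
The plan is to push the entire configuration through the functor $R$ into the Ab5 category $\sA$, where the results of the previous two sections apply, while keeping in mind that the \emph{cover} property of $\fq$ lives in $\sB$ and is \emph{not} transported by $R$. First I would fix a $\sC$\+precover $\fp\:\C\rarrow\D$ whose kernel inclusion $\ii\:\K\rarrow\C$ is locally split, and compare it with the cover $\fq\:\Q\rarrow\D$, whose kernel inclusion I denote $\fj\:\L\rarrow\Q$. Exactly as in the proof of Corollary~\ref{ab5-locally-split-precover-and-a-cover-cor}, comparing the two precovers of $\D$ gives morphisms $\gamma\:\Q\rarrow\C$ and $\beta\:\C\rarrow\Q$ with $\fp\gamma=\fq$ and $\fq\beta=\fp$; since $\fq$ is a cover, $\beta\gamma$ is an automorphism of $\Q$, and after adjusting $\beta$ we may take $\beta\gamma=\id_\Q$. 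Passing to kernels yields $\rho\:\L\rarrow\K$ and $\lambda\:\K\rarrow\L$ with $\lambda\rho=\id_\L$, \ $\gamma\fj=\ii\rho$, and $\beta\ii=\fj\lambda$, so that the monomorphism $\fj$ is a direct summand of the monomorphism $\ii$ in $\sB$.

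Next I would transport the local splitness into $\sA$. By Lemma~\ref{R-preserves-quasi-split-locally-split}(b), $R(\ii)$ is locally split in $\sA$, witnessed by the $R$\+images of the $\sB$\+witnesses of $\ii$: a direct system $(R(\K_x))_{x\in X}$, an epimorphism $R(\fs)\:\varinjlim_x R(\K_x)\rarrow R(\K)$, and morphisms $R(\fg_x)\:R(\C)\rarrow R(\K)$. Since $R$ carries the direct\+summand diagram above to a direct\+summand diagram exhibiting $R(\fj)$ as a direct summand of $R(\ii)$, following the proof of Lemma~\ref{direct-summand-of-locally-split} I obtain that $R(\fj)$ is a locally split monomorphism in $\sA$, with witnessing data of a very specific shape: the direct system is $L_x=R(\L)\sqcap_{R(\K)}R(\K_x)$, the structure map is the induced epimorphism $t\:\varinjlim_x L_x\rarrow R(\L)$, and the witnessing morphisms are $R(\fh_x)$, where $\fh_x=\lambda\fg_x\gamma\:\Q\rarrow\L$ are genuine morphisms in $\sB$. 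In particular $R(\fj)$ is a monomorphism, and the equation $R(\fh_x)\,R(\fj)\,t l_x=t l_x$ holds for every $x$, where $l_x\:L_x\rarrow\varinjlim_y L_y$ is the canonical morphism.

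The last step invokes the cover property, but only inside $\sB$. For each $x\in X$, the morphism $\fh_x\:\Q\rarrow\L$ yields an endomorphism $\id_\Q-\fj\fh_x$ of $\Q$ satisfying $\fq(\id_\Q-\fj\fh_x)=\fq$, since $\fq\fj=0$. As $\fq$ is a $\sC$\+cover, $\id_\Q-\fj\fh_x$ is an automorphism of $\Q$ in $\sB$; applying the additive functor $R$, the morphism $\id_{R(\Q)}-R(\fj)R(\fh_x)$ is an automorphism of $R(\Q)$ in $\sA$. Now I would run the computation of Lemma~\ref{cover-locally-split-kernel} in $\sA$:
$$
 \bigl(\id_{R(\Q)}-R(\fj)R(\fh_x)\bigr)\,R(\fj)\,t l_x
 =R(\fj)\,t l_x-R(\fj)\,R(\fh_x)\,R(\fj)\,t l_x=0 .
$$
Hence $R(\fj)\,t l_x=0$; as $R(\fj)$ is a monomorphism, $t l_x=0$ for every $x$; since the $l_x$ are jointly epimorphic, $t=0$; and since $t$ is an epimorphism, $R(\L)=0$.

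The main obstacle, which dictates the whole strategy, is that the two hypotheses live in incompatible categories: the cover property is detectable only in $\sB$, whereas the vanishing of $R(\L)$ requires the exactness of direct limits in $\sA$ (the fibered products $R(\L)\sqcap_{R(\K)}R(\K_x)$ must commute with the direct limit, which fails in a general Ab3 category, so $\fj$ itself need not be locally split in $\sB$). The device that reconciles them is that the witnessing morphisms $\fh_x=\lambda\fg_x\gamma$ are already defined in $\sB$: this lets me manufacture the automorphisms $\id_\Q-\fj\fh_x$ from the cover property in $\sB$ and then transport precisely these automorphisms into $\sA$, where they meet the $\sA$\+side witnessing equation $R(\fh_x)R(\fj)t l_x=t l_x$ supplied by local splitness.
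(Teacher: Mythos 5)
Your proof is correct and follows essentially the same route as the paper's: exhibit the kernel $\fj$ of the cover as a direct summand of the locally split monomorphism $\ii$, define the witnessing morphisms $\fh_x=\blambda\fg_x\bgamma$ as genuine morphisms in $\sB$, use the cover property in $\sB$ to make $\id_\Q-\fj\fh_x$ automorphisms, and then run the fibered-product computation from the direct-summand lemma in the Ab5 category $\sA$ after applying $R$. The only cosmetic difference is that the paper first reduces to the case where $\fq$ (and $\fp$) are epimorphisms by replacing $\D$ with $\im(\fq)$, a step your kernel-level formulation quietly avoids.
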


\begin{proof}
 Since any (pre)cover is a (pre)cover of its image and the images
of all $\sC$\+precovers of a given object $\D$ coincide, without loss of
generality we can replace $\D$ by $\im(\fq)$ and assume that $\fq$~is
an epimorphism.
 The argument below is a kind of conjunction of the proofs of
Lemmas~\ref{cover-locally-split-kernel}
and~\ref{direct-summand-of-locally-split}.

 Let $\fp\:\C\rarrow\D$ be a $\sC$\+precover of $\D$ with a locally
split kernel $\ii\:\K\rarrow\C$.
 Then the short exact sequence $0\rarrow\L\overset\fj\rarrow\Q\overset
\fq\rarrow\D\rarrow0$ is a direct summand of the short exact sequence
$0\rarrow\K\overset\ii\rarrow\C\overset\fp\rarrow\D\rarrow0$.
 So we have a diagram of morphisms of short exact sequences
\begin{equation} \label{precover-cover-sequences-diagram}
\begin{diagram}
\node{0}\arrow{e} \node{\L}\arrow{e,t}{\fj}\arrow{s,l}{\brho}
\node{\Q}\arrow{e,t}{\fq}\arrow{s,l}{\bgamma}
\node{\D}\arrow{e}\arrow{s,=}\node{0} \\
\node{0}\arrow{e} \node{\K}\arrow{e,t}{\ii}\arrow{s,l}{\blambda}
\node{\C}\arrow{e,t}{\fp}\arrow{s,l}{\bbeta}
\node{\D}\arrow{e}\arrow{s,=}\node{0} \\
\node{0}\arrow{e} \node{\L}\arrow{e,t}{\fj}
\node{\Q}\arrow{e,t}{\fq}
\node{\D}\arrow{e}\node{0}
\end{diagram}
\end{equation}
where all the vertical compositions are identity maps.

 Assume that a direct system $(\K_x)_{x\in X}$, an epimorphism
$\fs\:\varinjlim_{x\in X}^\sB\K_x\rarrow\K$, and some morphisms
$\fg_x\:\C\rarrow\K$ witness the local splitness of the morphism~$\ii$
in the category~$\sB$.
 As in the proof of Lemma~\ref{direct-summand-of-locally-split},
denote by $\fh_x\:\Q\rarrow\L$ the composition
$$
 \Q\overset\bgamma\lrarrow\C\overset{\fg_x}\lrarrow\K
 \overset\blambda\lrarrow\L.
$$
 As in the proof of Lemma~\ref{cover-locally-split-kernel},
consider the endomorphism $(\id_\Q-\fj\fh_x)\:\Q\rarrow\Q$.
 We have $\fq(\id_\Q-\fj\fh_x)=\fq$, since $\fq\fj=0$.
 Since $\fq$~is a cover, it follows that $\id_\Q-\fj\fh_x$ is
an automorphism of~$\Q$.

 Now we apply the functor $R$ to this whole picture.
 We will use the diagram~\eqref{direct-summand-morphism} as
a notation for the image of the leftmost and middle columns of
the diagram~\eqref{precover-cover-sequences-diagram} under
the functor~$R$.
 By Lemma~\ref{locally-split-f-pure}, the monomorphism~$\ii$ is
f\+pure; so the morphism $\bar\ii=i\:K\rarrow C$ is a monomorphism
in~$\sA$.
 It follows that the monomorphism~$\fj$ is f\+pure as well, being
a direct summand of~$\ii$; in other words, the morphism
$\bar\fj=j\:L\rarrow Q$ is a monomorphism, since it is a direct
summand of~$i$.

 Applying the functor $R$ to the direct system $(\K_x)_{x\in X}$,
we obtain a direct system $(K_x=\ov\K_x)_{x\in X}$ in
the category~$\sA$.
 The functor $R$ preserves direct limits and takes epimorphisms
to epimorphisms, so we get an epimorphism
$\bar\fs=s\:\varinjlim_{x\in X}^\sA K_x\rarrow K$.
 Let $\fk_x\:\K_x\rarrow\varinjlim_{y\in X}\K_y$ be the canonical
morphism; then $k_x=\bar\fk_x$ is the canonical morphism
$K_x\rarrow\varinjlim_{y\in X}K_y$.
 Put $\bar\fg_x=g_x\:C\rarrow K$; then the functor $R$ takes
the morphism $\fh_x=\blambda\fg_x\bgamma$ to the morphism
$h_x=\lambda g_x\gamma\:Q\rarrow L$, as
in~\eqref{producing-h-out-of-g}.

 As in the proofs of Lemmas~\ref{any-locally-split-is-mono}
and~\ref{direct-summand-of-locally-split}, denote by
$L_x=L\sqcap_KK_x$ the fibered product of the pair of morphisms
$\rho\:L\rarrow K$ and $sk_x\:K_x\rarrow K$.
 Since $\sA$ is an Ab5~category, we have $\varinjlim_{x\in X}L_x
=L\sqcap_K\varinjlim_{x\in X}K_x$, and the natural morphism
$t\:\varinjlim_{x\in X}L_x\rarrow L$ is an epimorphism.
 Let $l_x\:L_x\rarrow\varinjlim_{y\in X}L_y$ be the canonical morphism.
 Now the computation in~\eqref{direct-summand-witnessing-computation}
shows that $h_xjtl_x=tl_x$.

 For every index $x\in X$, the morphism $\id_Q-jh_x=R(\id_\Q-\fj\fh_x)$
is an automorphism of the object $Q\in\sA$, because the morphism
$\id_\Q-\fj\fh_x$ is an automorphism of the object $\Q\in\sB$ and
the functor $R$ (as any functor) takes isomorphisms to isomorphisms.
 Hence, similarly to~\eqref{locally-split-cover-computation},
the equations
$$
 (\id_Q-jh_x)jtl_x=jtl_x-jtl_x=0
$$
imply $jtl_x=0$.
 Since $j$~is a monomorphism, it follows that $tl_x=0$ for all
$x\in X$, and consequently $t=0$ and $L=0$, as desired.
\end{proof}

\begin{thm} \label{ab3-direct-limit-precover-cover-thm}
 Let\/ $\sB$ be an Ab3~category and\/ $\sC\subset\sB$ be a class of
objects closed under coproducts and direct summands.
 Let $(\fc_{y,x}\:\C_x\to\C_y)_{x<y\in X}$ be a diagram of objects\/
$\C_x\in\sC$, indexed by a directed poset $X$, and
let\/ $\D=\varinjlim_{x\in X}^\sB\C_x$ be its direct limit in
the category\/~$\sB$.
 Let\/ $\fp\:\coprod_{x\in X}^\sB\C_x\rarrow\D$ be the natural
epimorphism.
 Assume that
\begin{enumerate}
\item the morphism\/~$\fp$ is a\/ $\sC$\+precover in\/~$\sB$;
\item the object\/ $\D$ has a\/ $\sC$\+cover\/ $\fq\:\Q\rarrow\D$
in\/~$\sB$.
\end{enumerate}
 Put\/ $\L=\ker(\fq)$.
 Then, for every Ab5~category\/ $\sA$ and colimit-preserving functor
$R\:\sB\rarrow\sA$, one has $R(\L)=0$.
\end{thm}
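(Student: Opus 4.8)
The plan is to treat this statement as the Ab3~analogue of Theorem~\ref{ab5-direct-limit-precover-cover-thm} and to deduce it directly from the machinery already assembled, namely Proposition~\ref{direct-limit-epi-quasi-split}, Lemma~\ref{quasi-split-is-locally-split}, and especially Proposition~\ref{ab3-locally-split-precover-and-cover-prop}. The point is that all the genuine difficulty has already been absorbed into those earlier results; the theorem itself should follow by checking that their hypotheses are met.

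First I would observe that, since $\sC$ is closed under coproducts and each $\C_x\in\sC$, the source $\coprod^\sB_{x\in X}\C_x$ of the canonical map $\fp$ lies in $\sC$, so that hypothesis~(1) is meaningful and says precisely that $\fp$ is a $\sC$\+precover of $\D$. Next, Proposition~\ref{direct-limit-epi-quasi-split} applies verbatim to our diagram $(\fc_{y,x})$ and tells us that $\fp$ is a quasi-split epimorphism; hence its kernel is a quasi-split monomorphism. Invoking Lemma~\ref{quasi-split-is-locally-split}, this kernel is then locally split. In summary, $\fp$ exhibits $\D$ as an object having a $\sC$\+precover with a locally split kernel.

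With this in hand, the final step is to feed the data into Proposition~\ref{ab3-locally-split-precover-and-cover-prop}. Its two inputs are exactly a $\sC$\+cover $\fq\:\Q\rarrow\D$ (supplied by hypothesis~(2), with $\L=\ker(\fq)$) and the existence of a $\sC$\+precover of $\D$ with a locally split kernel (just established via $\fp$). Its conclusion is that $R(\L)=0$ for every Ab5~category $\sA$ and every colimit-preserving functor $R\:\sB\rarrow\sA$, which is precisely the assertion to be proved. I do not expect any essential obstacle at the level of the theorem itself: the whole content lies upstream, in the nonexactness-robust argument of Proposition~\ref{ab3-locally-split-precover-and-cover-prop} (a hybrid of the proofs of Lemmas~\ref{cover-locally-split-kernel} and~\ref{direct-summand-of-locally-split}) and in the quasi-splitness of the canonical presentation from Proposition~\ref{direct-limit-epi-quasi-split}. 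The one point to keep in mind is that, unlike in the Ab5 case where one concludes $\L=0$ outright, here direct limits need not be exact, so the vanishing can only be stated after applying a colimit-preserving functor $R$ into an Ab5~category---which is exactly the form in which Proposition~\ref{ab3-locally-split-precover-and-cover-prop} delivers its conclusion.
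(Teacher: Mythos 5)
Your proposal is correct and follows essentially the same route as the paper's own proof: both deduce the theorem by applying Proposition~\ref{direct-limit-epi-quasi-split} to see that $\fp$ is quasi-split, Lemma~\ref{quasi-split-is-locally-split} to conclude its kernel is locally split, and then Proposition~\ref{ab3-locally-split-precover-and-cover-prop} to obtain $R(\L)=0$. Your additional remark that $\coprod_{x\in X}^\sB\C_x\in\sC$ (by closure under coproducts) is a sensible sanity check that the paper leaves implicit.
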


\begin{proof}
 The proof is similar to that of
Theorem~\ref{ab5-direct-limit-precover-cover-thm}.
 By Proposition~\ref{direct-limit-epi-quasi-split},
the epimorphism~$\fp$ is quasi-split; so its kernel~$\ii$ is
a quasi-split monomorphism.
 By Lemma~\ref{quasi-split-is-locally-split}, it follows that
the morphism~$\ii$ is locally split.
 Hence Proposition~\ref{ab3-locally-split-precover-and-cover-prop} is
applicable.
\end{proof}

\Section{Covers of Direct Limits in Contramodule Categories}
\label{contramodule-direct-limits-secn}

 In this section we specialize the results of
Section~\ref{ab3-secn} to the case of the category $\sB=\R\contra$
of left contramodules over a topological ring~$\R$.

 A short exact sequence of left $\R$\+contramodules $0\rarrow\K
\rarrow\C\rarrow\D\rarrow0$ is said to be \emph{contratensor pure}
(or \emph{c\+pure} for brevity) if, for every discrete right
$\R$\+module $\N$, the short exact sequence of abelian groups
$0\rarrow \N\ocn_\R\K\rarrow\N\ocn_\R\C\rarrow\N\ocn_\R\D\rarrow0$
is exact, or equivalently, the map $\N\ocn_\R\K\rarrow\N\ocn_\R\C$
is injective.
 If a short exact sequence $0\rarrow\K\rarrow\C\rarrow\D\rarrow0$ in
$\R\contra$ is c\+pure, we will say that $\K\rarrow\C$ is
a \emph{c\+pure monomorphism} and $\C\rarrow\D$ is
a \emph{c\+pure epimorphism}.

 The functor of contratensor product $\N\ocn_\R{-}\:\R\contra\rarrow
\Ab$ takes values in the category of abelian groups $\sA=\Ab$ (which
has exact direct limits) and preserves all colimits (being a left
adjoint functor).
 So any f\+pure exact sequence (monomorphism, or epimorphism)
in $\sB=\R\contra$ is c\+pure.

 The next corollary is a generalization of~\cite[Lemma~2.2]{Pproperf}.

\begin{cor} \label{direct-limits-1-strictly-flat}
 The class of all\/ $1$\+strictly flat left\/ $\R$\+contramodules
is closed under direct limits in\/ $\R\contra$.
\end{cor}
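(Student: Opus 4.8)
The plan is to reduce the assertion to the vanishing of a single derived-functor group and then exploit the purity of the canonical presentation of a direct limit.  Let $(\F_x)_{x\in X}$ be a direct system of $1$\+strictly flat left $\R$\+contramodules indexed by a directed poset $X$, and set $\D=\varinjlim^{\R\contra}_{x\in X}\F_x$.  Writing $\fp\:\coprod_{x\in X}\F_x\rarrow\D$ for the natural epimorphism and $\K=\ker(\fp)$, I would work with the short exact sequence
$$
 0\lrarrow\K\overset\ii\lrarrow\coprod_{x\in X}\F_x\overset\fp\lrarrow\D\lrarrow0 .
$$
Since a contramodule is $1$\+strictly flat precisely when $\Ctrtor^\R_1(\N,{-})$ vanishes on it for every discrete right $\R$\+module $\N$, it suffices to prove that $\Ctrtor^\R_1(\N,\D)=0$ for all such~$\N$.

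The decisive point is that this presentation is contratensor pure.  Indeed, Proposition~\ref{direct-limit-epi-quasi-split} shows that $\fp$ is a quasi-split epimorphism, whence by Corollary~\ref{quasi-split-functor-pure} the displayed sequence is functor pure; and, as observed just before the present statement, every f\+pure short exact sequence in $\R\contra$ is c\+pure.  Concretely this says that the map $\N\ocn_\R\K\rarrow\N\ocn_\R\coprod_{x\in X}\F_x$ is injective for every discrete right $\R$\+module~$\N$.

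It then remains to feed this into the long exact sequence of $\Ctrtor^\R_*(\N,{-})$ attached to the short exact sequence above:
$$
 \Ctrtor^\R_1\Bigl(\N,\coprod_{x\in X}\F_x\Bigr)\lrarrow\Ctrtor^\R_1(\N,\D)
 \lrarrow\N\ocn_\R\K\lrarrow\N\ocn_\R\coprod_{x\in X}\F_x .
$$
By \cite[Lemma~2.1]{Pproperf} the coproduct $\coprod_{x\in X}\F_x$ is again $1$\+strictly flat, so the leftmost term vanishes; the rightmost arrow is injective by c\+purity.  Exactness at $\N\ocn_\R\K$ then identifies the image of the connecting map $\Ctrtor^\R_1(\N,\D)\rarrow\N\ocn_\R\K$ with the kernel of an injective map, that is, with zero, while the vanishing of the leftmost term makes this connecting map injective.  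Hence $\Ctrtor^\R_1(\N,\D)=0$, as required.

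Given the machinery established earlier, the argument is very short, and its only genuine content is the recognition that c\+purity of the canonical presentation is exactly what kills the connecting term in the $\Ctrtor$ long exact sequence.  The one point that deserves care is the chain of purity implications: it is the injectivity of $\N\ocn_\R\K\rarrow\N\ocn_\R\coprod_{x\in X}\F_x$, rather than the mere flatness of $\D$ (which already follows from closure of flat contramodules under direct limits), that upgrades $\D$ all the way to $1$\+strict flatness.
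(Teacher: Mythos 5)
Your proposal is correct and follows essentially the same route as the paper's proof: the canonical presentation $0\rarrow\K\rarrow\coprod_{x\in X}\F_x\rarrow\D\rarrow0$, quasi-splitness of~$\fp$ via Proposition~\ref{direct-limit-epi-quasi-split}, functor purity hence contratensor purity via Corollary~\ref{quasi-split-functor-pure}, and $1$\+strict flatness of the coproduct by~\cite[Lemma~2.1]{Pproperf}. The only difference is that you spell out explicitly, via the $\Ctrtor$ long exact sequence, the final deduction that the paper delegates to ``the discussion in~\cite[Section~2]{Pproperf}''; this is exactly the intended argument.
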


\begin{proof}
 Let $(f_{y,x}\:\F_x\to\F_y)_{x<y\in X}$ be a diagram of $1$\+strictly
flat left $\R$\+contramodules $\F_x$, indexed by a directed poset~$X$.
 By Proposition~\ref{direct-limit-epi-quasi-split}
and Corollary~\ref{quasi-split-functor-pure}, the short exact sequence
of $\R$\+contramodules
$$
 0\lrarrow\K\overset\ii\lrarrow\coprod_{x\in X}^{\R\contra}\F_x
 \overset\fp\lrarrow\varinjlim_{x\in X}^{\R\contra}\F_x\lrarrow0
$$
is functor pure, hence (in particular)  contratensor pure.
 Since the $\R$\+contramodule $\coprod_{x\in X}^{\R\contra}\F_x$
is $1$\+strictly flat by~\cite[Lemma~2.1]{Pproperf}, it follows that
the $\R$\+contramodule $\varinjlim_{x\in X}^{\R\contra}\F_x$ is
$1$\+strictly flat (see the discussion in~\cite[Section~2]{Pproperf}).
\end{proof}

 Let $\sC\subset\R\contra$ be a class of left $\R$\+contramodules closed
under coproducts and direct summands.

\begin{cor} \label{contramodule-precover-cover-cor}
 Let $(\fc_{y,x}\:\C_x\to\C_y)_{x<y\in X}$ be a diagram of left\/
$\R$\+contramodules\/ $\C_x\in\sC\subset\R\contra$, indexed by
a directed poset $X$, and let\/ $\D=\varinjlim_{x\in X}^{\R\contra}\C_x$
be its direct limit in the category\/ $\R\contra$.
 Let\/ $\fp\:\coprod_{x\in X}^{\R\contra}\C_x\rarrow\D$ be the natural
epimorphism.
 Assume that
\begin{enumerate}
\item the morphism\/~$\fp$ is a\/ $\sC$\+precover in\/~$\R\contra$;
\item the\/ $\R$\+contramodule\/ $\D$ has a\/ $\sC$\+cover\/
$\fq\:\Q\rarrow\D$ in\/~$\R\contra$.
\end{enumerate}
 Put\/ $\L=\ker(\fq)$.
 Then, for every open right ideal\/ $\I\subset\R$, one has\/
$\I\tim\L=\L$.
\end{cor}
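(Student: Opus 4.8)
Corollary \ref{contramodule-precover-cover-cor} is an almost immediate specialization of Theorem \ref{ab3-direct-limit-precover-cover-thm}, so let me trace the plan.

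The plan is to apply Theorem \ref{ab3-direct-limit-precover-cover-thm} with $\sB=\R\contra$ and with a suitably chosen Ab5 category $\sA$ together with a colimit-preserving functor $R\:\sB\rarrow\sA$. The hypotheses of that theorem are exactly the hypotheses~(1) and~(2) assumed here: the class $\sC$ is closed under coproducts and direct summands (this was stated just before the corollary), the natural epimorphism $\fp\:\coprod_{x\in X}^{\R\contra}\C_x\rarrow\D$ out of the canonical presentation of the direct limit is a $\sC$\+precover, and $\D$ has a $\sC$\+cover $\fq\:\Q\rarrow\D$ with kernel $\L=\ker(\fq)$. The theorem then yields $R(\L)=0$ for \emph{every} Ab5 category $\sA$ and every colimit-preserving functor $R\:\R\contra\rarrow\sA$.

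The only work left is to produce, for a given open right ideal $\I\subset\R$, one such functor whose vanishing on $\L$ says precisely that $\I\tim\L=\L$. First I would take $\sA=\Ab$, the category of abelian groups, which is Grothendieck and hence Ab5. Then I would take $R$ to be the contratensor product functor $R=(\R/\I)\ocn_\R{-}\:\R\contra\rarrow\Ab$ against the cyclic discrete right $\R$\+module $\N=\R/\I$. As recorded in the preliminaries, this functor is a left adjoint (via the adjunction isomorphism~\eqref{contratensor-adjunction}) and therefore preserves all colimits; in particular it is colimit-preserving in the sense required by Theorem \ref{ab3-direct-limit-precover-cover-thm}. Applying the theorem to this $R$ gives $R(\L)=(\R/\I)\ocn_\R\L=0$.

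It remains only to reinterpret this vanishing. By the identity $(\R/\I)\ocn_\R\C=\C/(\I\tim\C)$ recorded in the preliminaries (valid for any open right ideal $\I$ and any left $\R$\+contramodule $\C$), we have $(\R/\I)\ocn_\R\L=\L/(\I\tim\L)$. Hence $R(\L)=0$ translates into $\L/(\I\tim\L)=0$, that is, $\I\tim\L=\L$, which is exactly the claimed conclusion. No step here is a genuine obstacle; the substance was already carried by Theorem \ref{ab3-direct-limit-precover-cover-thm}, and the remaining content is the dictionary between contratensor products against $\R/\I$ and the operation $\I\tim{-}$, both of which are quoted verbatim from the preliminary section. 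The one point deserving a word of care is verifying that $R$ genuinely preserves \emph{all} colimits (not merely direct limits), which is what legitimizes invoking the theorem; this follows from $R$ being a left adjoint.
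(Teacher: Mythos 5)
Your proposal is correct and follows essentially the same route as the paper's own proof: apply Theorem~\ref{ab3-direct-limit-precover-cover-thm} with $\sB=\R\contra$, $\sA=\Ab$, and $R$ the contratensor product against the cyclic discrete right $\R$\+module $\R/\I$ (which preserves colimits as a left adjoint), then translate $(\R/\I)\ocn_\R\L=0$ into $\I\tim\L=\L$ via the identity $(\R/\I)\ocn_\R\C=\C/(\I\tim\C)$. The only cosmetic difference is that the paper first concludes $\N\ocn_\R\L=0$ for an arbitrary discrete right $\R$\+module $\N$ and then specializes to $\N=\R/\I$, whereas you specialize from the start.
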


\begin{proof}
 In the context of Theorem~\ref{ab3-direct-limit-precover-cover-thm},
set $\sB=\R\contra$, \ $\sA=\Ab$, and $R=\N\ocn_\R{-}$, where $\N$ is
a discrete right $\R$\+module.
 Then we can conclude that $\N\ocn_\R\L=0$.
 In particular, for $\N=\R/\I$ and any left $\R$\+contramodule $\C$,
one has $\N\ocn_\R\C=\C/(\I\tim\C)$; so we get the desired equation
$\L=\I\tim\L$.
\end{proof}

\begin{cor}
 Suppose that the topological ring\/ $\R$ has a countable base of
neighborhoods of zero.
 Let $(\fc_{y,x}\:\C_x\to\C_y)_{x<y\in X}$ be a diagram of left\/
$\R$\+contramodules\/ $\C_x\in\sC\subset\R\contra$, indexed by
a directed poset $X$, and let\/ $\D=\varinjlim_{x\in X}^{\R\contra}\C_x$
be its direct limit in the category\/ $\R\contra$.
 Let\/ $\fp\:\coprod_{x\in X}^{\R\contra}\C_x\rarrow\D$ be the natural
epimorphism.
 Assume that
\begin{enumerate}
\item the morphism\/~$\fp$ is a\/ $\sC$\+precover in\/~$\R\contra$;
\item the\/ $\R$\+contramodule\/ $\D$ has a\/ $\sC$\+cover
in\/~$\R\contra$.
\end{enumerate}
 Then one has\/ $\D\in\sC$ and the epimorphism\/~$\fp$ is split.
\end{cor}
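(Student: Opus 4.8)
The plan is to reduce everything to the preceding Corollary~\ref{contramodule-precover-cover-cor} and then to upgrade its conclusion by a Nakayama-type argument made available by the countable base hypothesis. Let $\fq\:\Q\rarrow\D$ be the $\sC$\+cover supplied by assumption~(2), and put $\L=\ker(\fq)$. Since $\fp$ is a $\sC$\+precover whose source $\C'=\coprod_{x\in X}^{\R\contra}\C_x$ lies in $\sC$, it factors through the cover $\fq$, and as $\fp$ is an epimorphism so is $\fq$. The hypotheses of Corollary~\ref{contramodule-precover-cover-cor} are exactly~(1) and~(2), so I would apply it verbatim to obtain $\I\tim\L=\L$ for every open right ideal $\I\subset\R$. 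The entire content of the present corollary is therefore to turn this family of equalities into the single conclusion $\L=0$, and it is precisely here that the countable base enters.

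For the Nakayama step I would first reformulate the input contratensor\+theoretically. Recalling the identity $\C/(\I\tim\C)=(\R/\I)\ocn_\R\C$ from the Preliminaries, the equalities $\I\tim\L=\L$ say exactly that $(\R/\I)\ocn_\R\L=0$ for every open right ideal $\I$. Since every discrete right $\R$\+module is a filtered colimit of its finitely generated (hence cyclic-filtered) submodules and the functor ${-}\ocn_\R\L$ preserves colimits, this upgrades to $\N\ocn_\R\L=0$ for \emph{every} discrete right $\R$\+module $\N$. I would then conclude $\L=0$ from the Nakayama lemma for contramodules over a topological ring with a countable base of neighborhoods of zero (cf.~\cite{PR}, \cite{Pproperf}): over such a ring, a contramodule that contratensors to zero against all discrete modules must vanish.

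The hard part is exactly this last implication, and I expect it to be the main obstacle. A naive telescoping argument, repeatedly rewriting an element of $\L$ by means of $\L=\I_n\tim\L$ along a countable base $\I_1\supseteq\I_2\supseteq\dotsb$, does \emph{not} work directly: opening parentheses via the monad multiplication~$\phi$ multiplies the successive coefficients with the deep factor on the \emph{right}, and an open \emph{right} ideal need not absorb left multiplication, so the filtration degree fails to increase along the substitution. The standard way around this is to pass through the closed \emph{two-sided} topological Jacobson radical~$\H$, for which $\H\tim({-})$ is better behaved, and to establish the separatedness $\bigcap_n(\I_n\tim\L)=0$ under the countability assumption; this is the substance of the cited Nakayama lemma, which I would invoke as a black box rather than reprove.

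Finally, with $\L=0$ in hand the remaining steps are formal. The cover $\fq\:\Q\rarrow\D$ is then an epimorphism with zero kernel, hence an isomorphism, so $\D\simeq\Q\in\sC$ and in particular $\D\in\sC$. Since $\fp$ is a $\sC$\+precover and now $\D\in\sC$, the identity morphism $\id_\D\:\D\rarrow\D$ factors as $\fp g$ for some $g\:\D\rarrow\C'$, which exhibits $\fp$ as a split epimorphism. This is the same concluding move as in Corollary~\ref{ab5-locally-split-precover-and-a-cover-cor}.
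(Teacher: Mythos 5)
Your proof is correct and follows essentially the same route as the paper: apply Corollary~\ref{contramodule-precover-cover-cor} to get $\I\tim\L=\L$ for all open right ideals $\I$, then invoke the contramodule Nakayama lemma for topological rings with a countable base of neighborhoods of zero (the paper cites~\cite[Lemma~6.14]{PR}) to conclude $\L=0$, and finish formally. Your extra reformulation via $\N\ocn_\R\L=0$ for all discrete $\N$ and your spelled-out concluding steps are harmless additions to what the paper leaves implicit.
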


\begin{proof}
 By the contramodule Nakayama lemma~\cite[Lemma~6.14]{PR}, the equations
$\I\tim\L=\L$ for a fixed left $\R$\+contramodule $\L$ and all the open
right ideals $\I\subset\R$ imply $\L=0$.
 So the assertion follows from
Corollary~\ref{contramodule-precover-cover-cor}.

 Note that this version of contramodule Nakayama lemma does \emph{not}
hold without the assumption of a countable base of neighborhoods of
zero in $\R$, generally speaking.
 For a counterexample, see~\cite[Remark~6.3]{Pcoun}.
\end{proof}

 A left $\R$\+contramodule $\C$ is said to be \emph{separated} if
the intersection of its subgroups $\I\tim\C$, taken over all the open
right ideals $\I\subset\R$, vanishes.
 (See~\cite[Section~7.3]{PS1} or~\cite[Section~5]{Pcoun}
for the discussion.)

\begin{cor} \label{separated-precover-cover-cor}
 Suppose that the class\/ $\sC\subset\R\contra$ consists of separated
left\/ $\R$\+contramodules.
 Let $(\fc_{y,x}\:\C_x\to\C_y)_{x<y\in X}$ be a diagram of left\/
$\R$\+contramodules\/ $\C_x\in\sC\subset\R\contra$, indexed by
a directed poset $X$, and let\/ $\D=\varinjlim_{x\in X}^{\R\contra}\C_x$
be its direct limit in the category\/ $\R\contra$.
 Let\/ $\fp\:\coprod_{x\in X}^{\R\contra}\C_x\rarrow\D$ be the natural
epimorphism.
 Assume that
\begin{enumerate}
\item the morphism\/~$\fp$ is a\/ $\sC$\+precover in\/~$\R\contra$;
\item the\/ $\R$\+contramodule\/ $\D$ has a\/ $\sC$\+cover
in\/~$\R\contra$.
\end{enumerate}
 Then one has\/ $\D\in\sC$ and the epimorphism\/~$\fp$ is split.
\end{cor}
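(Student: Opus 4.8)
The plan is to run the argument of the two preceding corollaries, but to replace the appeal to the contramodule Nakayama lemma (which needed a countable base of neighborhoods of zero) by the separatedness hypothesis imposed on the class~$\sC$. To begin with, I would observe that the hypotheses here are precisely those of Corollary~\ref{contramodule-precover-cover-cor}; applying it to $\L=\ker(\fq)$ yields the equalities $\I\tim\L=\L$ for every open right ideal $\I\subset\R$.

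The next step is to bring in separatedness. Since $\fq\:\Q\rarrow\D$ is a $\sC$\+cover, its source satisfies $\Q\in\sC$, and by assumption $\Q$ is therefore a separated left $\R$\+contramodule, i.e.\ $\bigcap_\I(\I\tim\Q)=0$, the intersection being taken over all open right ideals $\I\subset\R$. Now $\L=\ker(\fq)$ is a subcontramodule of $\Q$, so the inclusion $\L\subset\Q$ of underlying sets induces $\I[[\L]]\subset\I[[\Q]]$, and compatibility of the contraaction maps gives the inclusion $\I\tim\L\subset\I\tim\Q$ for each~$\I$. Combining this with the previous step, I would compute
$$
 \L=\bigcap\nolimits_\I(\I\tim\L)\subset\bigcap\nolimits_\I(\I\tim\Q)=0,
$$
so that $\L=0$.

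It remains to draw the two conclusions. Because $\coprod_{x\in X}^{\R\contra}\C_x\in\sC$ and $\fq$ is a $\sC$\+precover, the precover $\fp$ factors through $\fq$; as $\fp$ is an epimorphism, so is~$\fq$. Having $\ker(\fq)=\L=0$, I conclude that the cover $\fq$ is an isomorphism, whence $\D\cong\Q\in\sC$. Finally, with $\D\in\sC$ in hand, the identity morphism $\id_\D$ factors through the $\sC$\+precover $\fp$, and this factorization is a section of~$\fp$; hence $\fp$ is a split epimorphism.

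The only point requiring a little care---and the place where the separatedness of $\Q$ does its essential work---is the inclusion $\I\tim\L\subset\I\tim\Q$ for the subcontramodule $\L\subset\Q$, which follows directly from the definition of the operation $\J\tim({-})$ recalled in the Preliminaries. I expect no genuine obstacle to arise, the substance of the corollary already being contained in Corollary~\ref{contramodule-precover-cover-cor}.
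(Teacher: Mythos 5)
Your proof is correct and takes essentially the same route as the paper's: both apply Corollary~\ref{contramodule-precover-cover-cor} to obtain $\I\tim\L=\L$ for all open right ideals $\I\subset\R$, and then use the separatedness of $\Q\in\sC$ together with the inclusion $\I\tim\L\subset\I\tim\Q$ (valid since $\L$ is a subcontramodule of $\Q$) to conclude $\L=0$. The paper leaves the final routine deductions implicit ($\fq$ epi because $\fp$ factors through it, hence an isomorphism, hence $\D\in\sC$ and $\fp$ split), which you spell out correctly using the standing assumption that $\sC$ is closed under coproducts.
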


\begin{proof}
 In the context of Corollary~\ref{contramodule-precover-cover-cor},
we have $\L\subset\Q$ and $\Q\in\sC$.
 Hence
$$
 \bigcap\nolimits_{\I\subset\R}(\I\tim\L)\,\subset\,
 \bigcap\nolimits_{\I\subset\R}(\I\tim\Q)\,=\,0,
$$
where the intersection is taken over all the open right ideals
$\I\subset\R$.
 Thus the equations $\I\tim\L=\L$ for all open right ideals
$\I$ imply $\L=0$.
\end{proof}

 The next corollary is the main result of this section, and
the promised generalization of Corollary~\ref{bass-flat-contra-cover}.

\begin{cor} \label{direct-limits-proj-contramodules-no-proj-covers}
 Let $(\ff_{y,x}\:\P_x\to\P_y)_{x<y\in X}$ be a diagram of projective
left\/ $\R$\+contra\-modules\/ $\P_x\in\R\contra_\proj$, indexed by
a directed poset $X$, and let\/ $\F=\varinjlim_{x\in X}^{\R\contra}\P_x$
be its direct limit in the category\/ $\R\contra$.
 Assume that the\/ $\R$\+contramodule\/ $\F$ has a projective cover
in\/ $\R\contra$.
 Then\/ $\F$ is a projective left\/ $\R$\+contramodule.
\end{cor}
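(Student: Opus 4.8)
The plan is to deduce this from Corollary~\ref{separated-precover-cover-cor} by taking $\sC=\R\contra_\proj$ to be the class of all projective left $\R$\+contramodules. This class is closed under coproducts and under direct summands, so it is an admissible choice of~$\sC$, and for this $\sC$ a $\sC$\+cover is exactly a projective cover. Thus everything reduces to verifying the two numbered hypotheses of that corollary, together with its standing assumption that the objects of $\sC$ be separated.

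Hypothesis~(1) is automatic here, and this is precisely the feature that lets us dispense with the general machinery of Theorem~\ref{ab3-direct-limit-precover-cover-thm}. A coproduct of projective contramodules is projective (a coproduct of free contramodules $\R[[X_x]]$ is the free contramodule on the disjoint union of the $X_x$, and projectives are the direct summands of free ones), so $\coprod_{x\in X}^{\R\contra}\P_x$ is projective; and the canonical morphism $\fp$ onto the directed colimit $\F$ is an epimorphism. Hence $\fp$ is a projective precover of~$\F$, i.e.\ a $\sC$\+precover. Hypothesis~(2) is nothing but the assumption that $\F$ admits a projective cover.

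The one nonformal point, which I expect to be the crux, is that every projective $\R$\+contramodule is separated. I would check this first for a free contramodule $\R[[X]]$: for each open (hence closed) right ideal $\I\subset\R$ one has $\I\tim\R[[X]]=\I[[X]]$, so any element $\sum_x r_xx$ lying in $\I\tim\R[[X]]$ for all open $\I$ has every coefficient $r_x$ in $\bigcap_\I\I$, which vanishes because $\R$ is separated; hence $\bigcap_\I(\I\tim\R[[X]])=0$. For a direct summand $\P\subset\R[[X]]$, functoriality of the operation $\I\tim({-})$ shows $\I\tim\P\subset\I\tim\R[[X]]$ inside $\R[[X]]$, so $\bigcap_\I(\I\tim\P)\subset\bigcap_\I(\I\tim\R[[X]])=0$, and $\P$ is separated as well.

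With all the hypotheses of Corollary~\ref{separated-precover-cover-cor} in force, its conclusion $\D\in\sC$ says exactly that $\F$ is a projective $\R$\+contramodule, as required. Everything beyond the separatedness of projectives is formal: recognizing that the projective-precover hypothesis holds for free reasons, and then invoking the already-established separated-class corollary.
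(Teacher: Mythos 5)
Your proof is correct and is essentially the paper's own argument: both specialize Corollary~\ref{separated-precover-cover-cor} to the class $\sC=\R\contra_\proj$, verify hypothesis~(1) by noting that a coproduct of projective contramodules is projective so that $\fp$ is a projective precover, and take hypothesis~(2) as the standing assumption. The only difference is that you spell out the verification that projective contramodules are separated (via $\I\tim\R[[X]]=\I[[X]]$, the separatedness of $\R$, and passage to direct summands), a fact the paper asserts without proof; your verification of it is correct.
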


\begin{proof}
 Let $\sC=\R\contra_\proj$ be the class of all projective left
$\R$\+contramodules.
 All projective $\R$\+contramodules are separated, so
Corollary~\ref{separated-precover-cover-cor} is applicable.

 Furthermore, the left $\R$\+contramodule $\coprod_{x\in X}\P_x$
is projective and the morphism $\fp\:\coprod_{x\in X}\P_x\rarrow\F$
is an epimorphism; so $\fp$~is a projective precover.
 Hence the condition~(1) is satisfied.
 The condition~(2) is satisfied by assumption.
 We can conclude that $\F\in\sC=\R\contra_\proj$.
\end{proof}

\Section{Applications to the Enochs Conjecture}

 For the benefit of a reader not necessarily familiar with the context,
let us recall the statement of the conjecture~\cite[Section~5.4]{GT}
(cf.~\cite[Section~5]{AST}).

\begin{conj}[a question of Enochs]
 Let $A$ be an associative ring and\/ $\sL\subset A\modl$ be a class of
left $A$\+modules.
 Assume that every left $A$\+module has an\/ $\sL$\+cover.
 Then the class of modules\/ $\sL$ is closed under direct limits in
$A\modl$.
\end{conj}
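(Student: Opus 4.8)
The plan is to run the locally split kernel argument of the preceding sections directly against the conjecture as literally stated, and then to isolate the single property whose absence leaves the statement open. Fix a directed poset $X$ and a direct system $(\fc_{y,x}\:C_x\to C_y)_{x<y\in X}$ of modules $C_x\in\sL$, and put $D=\varinjlim_{x\in X}C_x$, computed in the Grothendieck (hence Ab5) category $A\modl$. Form the canonical presentation
$$
 0\lrarrow K\overset i\lrarrow\coprod_{x\in X}C_x\overset p\lrarrow D\lrarrow 0.
$$
By Proposition~\ref{direct-limit-epi-quasi-split} the epimorphism~$p$ is quasi-split, whence by Lemma~\ref{quasi-split-is-locally-split} the monomorphism~$i$ is locally split; this is the elementary local splitness of the canonical presentation recalled in the introduction, and it holds with \emph{no} hypothesis on the class~$\sL$ whatsoever.

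The idea is then to invoke Corollary~\ref{ab5-locally-split-precover-and-a-cover-cor} with $\sC=\sL$ and the epimorphism~$p$. That corollary delivers exactly the desired conclusion, namely $D\in\sL$ (together with a splitting of~$p$), as soon as its two hypotheses are met: that $p$ is an $\sL$\+precover with locally split kernel, and that $D$ has an $\sL$\+cover. The latter is precisely the covering assumption of the conjecture, and the local splitness of $\ker p$ has just been secured unconditionally. Thus the entire statement collapses to a single point: \emph{the canonical map $p\:\coprod_{x\in X}C_x\rarrow D$ must be an $\sL$\+precover}.

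Before confronting that point, I would record what the covering hypothesis alone already buys, so as to confirm that no extra assumptions are being smuggled in. A covering class is automatically closed under direct summands: if $L=L_1\oplus L_2\in\sL$ and $q\:Q\rarrow L_1$ is the $\sL$\+cover of $L_1$, then factoring the projection $L\rarrow L_1$ through the precover~$q$ and precomposing with the inclusion $L_1\rarrow L$ produces a section of~$q$; an idempotent-style computation shows that a split epimorphism which is a cover is an isomorphism, whence $L_1\cong Q\in\sL$. The two displayed hypotheses of Corollary~\ref{ab5-locally-split-precover-and-a-cover-cor} are in fact all that its proof consumes—the comparison of the cover and precover sequences there lives entirely at the level of individual short exact sequences—so this summand closure enters only as a reassurance that the class-theoretic side of the conjecture is under control.

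The hard part, the one I expect to be an outright obstruction rather than a routine verification, is the precover property of~$p$. For $p$ to be an $\sL$\+precover one needs first that the coproduct $\coprod_{x\in X}C_x$ itself lie in $\sL$, and a covering class is \emph{not} known to be closed under set-indexed coproducts; worse, even granting $\coprod_{x\in X}C_x\in\sL$, the requirement that \emph{every} morphism $L'\rarrow D$ with $L'\in\sL$ factor through~$p$ is a genuine lifting condition that does not follow formally, since an epimorphism out of an object of $\sL$ need not be an $\sL$\+precover (unlike the special case of projective precovers, where epimorphisms from projectives always are). Indeed, were $p$ already a precover, the $\sL$\+cover $q\:Q\rarrow D$ would have to factor through~$p$, and such a factorization is tantamount to the very splitting the conclusion asserts—so there is no purely formal bootstrap from the covering hypothesis to the precover property. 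This is exactly the gap that keeps Enochs' question open in general, and it explains the shape of the results that follow: rather than proving the precover property for all covering classes, the realistic program—carried out below for the left class of a tilting cotorsion pair and for $\Add(M)$ with $M$ weakly countably generated—is to restrict to classes for which the precover property of $p$ can be verified directly, and then to let the locally split kernel argument of this section run to completion.
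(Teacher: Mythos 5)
You have correctly recognized that this statement is presented in the paper as an open conjecture, not a theorem: the paper contains no proof of it, and your proposal rightly refrains from claiming one. Your reduction is exactly the paper's own strategy for its partial results: Proposition~\ref{direct-limit-epi-quasi-split} together with Lemma~\ref{quasi-split-is-locally-split} makes the kernel of the canonical map $p\:\coprod_{x\in X}C_x\rarrow D$ locally split with no hypothesis on $\sL$, and Corollary~\ref{ab5-locally-split-precover-and-a-cover-cor} (equivalently Theorem~\ref{ab5-direct-limit-precover-cover-thm}) then yields $D\in\sL$ once $p$ is known to be an $\sL$\+precover. The paper's positive results are precisely instances where that precover property can be verified: via $\PExt^1$\+vanishing in Corollary~\ref{pext1-corollary}, via the cotorsion-pair hypotheses of Application~\ref{cotorsion-pair-appl} and Corollary~\ref{tilting-corollary}, and via THEC in Application~\ref{weakly-countably-generated-application}. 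So your diagnosis of the single remaining obstruction agrees with the structure of the paper.

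One correction to your analysis of that obstruction, however: you split it into two difficulties, the membership $\coprod_{x\in X}C_x\in\sL$ and the lifting condition, and claim the former is ``not known'' for covering classes. In fact, under the conjecture's hypothesis that \emph{every} module has an $\sL$\+cover, the class $\sL$ is automatically closed under arbitrary direct sums, by the very idempotent-style computation you used for direct summands: each coproduct inclusion $L_i\rarrow\bigoplus_j L_j$ with $L_i\in\sL$ factors through a cover $q\:Q\rarrow\bigoplus_j L_j$, the induced map $\bigoplus_i L_i\rarrow Q$ is a section of~$q$ by the universal property of the coproduct, and a cover that is a split epimorphism is an isomorphism (if $s$ is a section, then $q(sq)=q$ forces $sq$ to be an automorphism, whence $q$ is monic as well as epic). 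Thus the only genuinely open point is the lifting condition, namely that every morphism $L'\rarrow D$ with $L'\in\sL$ factor through~$p$; this is indeed where the question of Enochs remains open, and your observation that no formal bootstrap from the covering hypothesis to this precover property is available is a fair account of the state of the problem.
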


 Let $\sA$ be a cocomplete abelian category and $M\in\sA$ be an object.
 Then we denote by $\Add(M)\subset A\modl$ the class of all direct
summands of coproducts of copies of $M$ in~$\sA$.
 In this section we mostly discuss certain results in the direction of
 the Enochs conjecture \emph{for the class of objects\/
$\sL=\Add(M)$}.

\subsection{Cotorsion pairs with the right class closed under
direct limits}
 Given a class of objects $\sL$ in a cocomplete abelian category $\sA$,
we will denote by $\varinjlim^\sA\sL\subset\sA$ the class of all direct
limits of objects from~$\sL$.
 This means the direct limits in $\sA$ of diagrams of objects of $\sL$
indexed by directed posets.

 For a pair of objects $M$, $N$ in a cocomplete abelian category $\sA$,
let us denote by $\PExt^1_\sA(M,N)\subset\Ext^1_\sA(M,N)$
the abelian group of all equivalence classes of f\+pure short exact
sequences $0\rarrow N\rarrow{?}\rarrow M\rarrow0$ in~$\sA$.
 In other words, $\PExt^1_\sA({-},{-})$ is the $\Ext^1$ group in
the functor pure exact structure on the category $\sA$
(see~\cite[Section~8]{BP3}).

\begin{cor} \label{pext1-corollary}
 Let\/ $\sA$ be an Ab5~category and $M\in\sA$ be an object.
 Suppose that $\PExt^1_\sA(M,E)=0$ for all objects
$E\in\varinjlim^\sA\Add(M)$.
 Let $D\in\varinjlim^\sA\Add(M)$ be an object having 
an\/ $\Add(M)$\+cover in\/~$\sA$.
 Then $D\in\Add(M)$.
\end{cor}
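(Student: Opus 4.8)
The plan is to apply Theorem~\ref{ab5-direct-limit-precover-cover-thm} to the class $\sC=\Add(M)$, which is closed under coproducts and direct summands by its very definition. Write $D=\varinjlim_{x\in X}^\sA C_x$ with all $C_x\in\Add(M)$, and let $p\:\coprod_{x\in X}C_x\rarrow D$ be the natural epimorphism. Condition~(2) of that theorem holds by assumption, since $D$ is given to have an $\Add(M)$\+cover, so it remains only to verify condition~(1): that $p$ is an $\Add(M)$\+precover. Once this is established, the theorem yields $D\in\Add(M)$, as required.

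First I would show that the kernel $K=\ker(p)$ lies in $\varinjlim^\sA\Add(M)$. For each $z\in X$, the ``sum'' morphism $\epsilon_z\:\coprod_{x\le z}C_x\rarrow C_z$ assembled from the transition maps $c_{z,x}$ is a split epimorphism, being split by the inclusion of the $z$\+indexed summand (as $c_{z,z}=\id_{C_z}$); hence its kernel $K_z$ is a direct summand of $\coprod_{x\le z}C_x\in\Add(M)$, and so $K_z\in\Add(M)$. As $z$ ranges over $X$, the $\epsilon_z$ form a morphism of $X$\+indexed diagrams whose direct limit is~$p$, because $\varinjlim_z\coprod_{x\le z}C_x=\coprod_{x\in X}C_x$ and $\varinjlim_z C_z=D$. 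Since direct limits are exact in the Ab5~category $\sA$, it follows that $K=\varinjlim_z K_z\in\varinjlim^\sA\Add(M)$; this is consistent with the quasi-split presentation of~$p$ furnished by Proposition~\ref{direct-limit-epi-quasi-split}. Applying the hypothesis with $E=K$ now gives $\PExt^1_\sA(M,K)=0$.

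It then remains to check that $p$ is a precover. By Corollary~\ref{quasi-split-functor-pure} and Proposition~\ref{direct-limit-epi-quasi-split}, the short exact sequence $0\rarrow K\rarrow\coprod_{x\in X}C_x\overset p\rarrow D\rarrow0$ is f\+pure. Given $f\:C'\rarrow D$ with $C'\in\Add(M)$, I would reduce the lifting problem in two steps. Since $C'$ is a direct summand of some coproduct $\coprod_I M$ and the factorization property is inherited by direct summands of the source, it suffices to lift morphisms $\coprod_I M\rarrow D$ through~$p$; and by the universal property of the coproduct this reduces to lifting a single morphism $h\:M\rarrow D$. The pullback of the f\+pure sequence along~$h$ is an f\+pure extension $0\rarrow K\rarrow P\rarrow M\rarrow0$, whose class lies in $\PExt^1_\sA(M,K)=0$; it therefore splits, and a splitting provides the desired lift $M\rarrow\coprod_{x\in X}C_x$. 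Hence $p$ is an $\Add(M)$\+precover, and Theorem~\ref{ab5-direct-limit-precover-cover-thm} gives $D\in\Add(M)$. The one genuinely substantive point is the identification $K\in\varinjlim^\sA\Add(M)$, which is precisely where exactness of direct limits is used; the two-step reduction of the precover condition to the single object $M$ is routine once f\+purity of the defining sequence is in hand.
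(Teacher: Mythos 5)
Your proof is correct and takes essentially the same route as the paper's: both apply Theorem~\ref{ab5-direct-limit-precover-cover-thm} to $\sC=\Add(M)$, identify the kernel $K$ of the canonical epimorphism~$p$ as a direct limit of the objects $K_z\in\Add(M)$ (using Ab5 exactness and the split sequences from the proof of Proposition~\ref{direct-limit-epi-quasi-split}), and then use $\PExt^1_\sA(M,K)=0$ together with f\+purity of the sequence (Corollary~\ref{quasi-split-functor-pure}) to conclude that $p$~is an $\Add(M)$\+precover. You merely spell out two details the paper leaves implicit, namely the explicit splitting of the morphisms $\coprod_{x\le z}C_x\rarrow C_z$ and the routine reduction of the precover condition to lifting single morphisms $M\rarrow D$.
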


\begin{proof}
 This is a corollary of
Theorem~\ref{ab5-direct-limit-precover-cover-thm}.
 Let $(c_{y,x}\:C_x\to C_y)_{x<y\in X}$ be a diagram of objects
$C_x\in\Add(M)$ such that $D=\varinjlim^\sA_{x\in X}C_x$.
 Then the short exact sequence
\begin{equation} \label{Ab5-generalized-telescope}
 0\lrarrow K\overset i\lrarrow\coprod_{x\in X}C_x\overset p
 \lrarrow D\lrarrow0
\end{equation}
(cf.~\eqref{generalized-telescope}) is f\+pure exact in~$\sA$
by Proposition~\ref{direct-limit-epi-quasi-split}
and Corollary~\ref{quasi-split-functor-pure}
(see also~\cite[Example~8.4]{BP3}).
 Furthermore, following the proof of
Proposition~\ref{direct-limit-epi-quasi-split} and keeping in mind
that the direct limits in $\sA$ are exact by assumption, the object
$K$ in the short exact sequence~\eqref{Ab5-generalized-telescope} is
the direct limit of the objects $K_z$, which belong to $\Add(M)$.
 By assumption, it follows that $\PExt^1_\sA(M,K)=0$.
 Since the sequence~\eqref{Ab5-generalized-telescope} is f\+pure exact,
we can conclude that any morphism $M\rarrow D$ can be lifted to
a morphism $M\rarrow\coprod_{x\in X}C_x$.
 In other words, this means that the morphism
$p\:\coprod_{x\in X}C_x\rarrow D$ is an $\Add(M)$\+precover.
 According to Theorem~\ref{ab5-direct-limit-precover-cover-thm},
we have $D\in\Add(M)$.
\end{proof}

 Let $\sA$ be an abelian category.
 A pair of full subcategories $(\sL,\sE)$ in $\sA$ is said to be
a \emph{cotorsion pair} if
\begin{itemize}
\item $\sE$ is the class of all objects $A\in\sA$ such that
$\Ext^1_\sA(L,A)=0$ for all $L\in\sL$; and
\item $\sL$ is the class of all objects $A\in\sA$ such that
$\Ext^1_\sA(A,E)=0$ for all $E\in\sE$.
\end{itemize}
 The intersection $\sL\cap\sE\subset\sA$ is called the \emph{kernel}
of the cotorsion pair $(\sL,\sE)$.

\begin{appl} \label{cotorsion-pair-appl}
 Let\/ $\sA$ be an Ab5\+category and $(\sL,\sE)$ be a cotorsion pair
in~$\sA$.
 Assume that the class of objects\/ $\sE\subset\sA$ is closed under
direct limits, and let $M\in\sL\cap\sE$ be an object of the kernel.
 Let $D\in\varinjlim^\sA\Add(M)$ be an object having
an\/ $\Add(M)$\+cover in\/~$\sA$.
 Then $D\in\Add(M)$.
\end{appl}

\begin{proof}
 In any cotorsion pair $(\sL,\sE)$ in a cocomplete abelian category
$\sA$, the left class $\sL$ is closed under coproducts, and both
the classes $\sL$ and $\sE$ are closed under direct summands.
 In the situation at hand, the class $\sE\subset\sA$ is closed under
coproducts by assumption.
 Hence for an object $M\in\sL\cap\sE$ we have
$\Add(M)\subset\sL\cap\sE$.
 Since, moreover, we have assumed that the class $\sE\subset\sA$ is
closed under direct limits, we have $\varinjlim^\sA\Add(M)\subset\sE$.
 It follows that $\Ext^1_\sA(M,E)=0$ for all
$E\in\varinjlim^\sA\Add(M)$, as $(\sL,\sE)$ is a cotorsion pair
and $M\in\sL$.

 Applying Corollary~\ref{pext1-corollary}, we conclude that
$D\in\Add(M)$.
\end{proof}

 The Enochs conjecture for the left class $\sL$ of an \emph{$n$\+tilting
cotorsion pair} in an Ab5\+category $\sA$ can be deduced from
Application~\ref{cotorsion-pair-appl}.
 Let $T\in\sA$ be an \emph{$n$\+tilting object} in the sense of
the paper~\cite{PS1} (see~\cite[Section~11]{BP3} for a brief summary),
and let $(\sL,\sE)$ be the corresponding tilting cotorsion pair in~$\sA$.

 By~\cite[Proposition~12.3]{BP3}, the tilting class $\sE$ is closed
under direct limits in~$\sA$.
 Furthermore, by~\cite[Theorem~3.4]{PS1}, the tilting cotorsion pair
$(\sL,\sE)$ is \emph{complete} and \emph{hereditary}, its kernel
$\sL\cap\sE$ coincides with the class $\Add(T)\subset\sA$
(by~\cite[Lemma~3.2(b)]{PS1}), and the \emph{coresolution dimension} of
objects of $\sA$ with respect to the coresolving subcategory $\sE$
does not exceed~$n$ (by~\cite[Lemma~3.1]{PS1}).
 We refer to~\cite[Section~3]{PS1} for the definitions of the terms
involved.
 These are the properties of the cotorsion pair $(\sL,\sE)$ that we will
use.

\begin{cor} \label{tilting-corollary}
 Let\/ $\sA$ be an Ab5\+category and $T\in\sA$ be an $n$\+tilting object
(where $n\ge0$ is an integer).
 Let $(\sL,\sE)$ be the $n$\+tilting cotorsion pair in\/ $\sA$
induced by~$T$.
 Assume that all the objects of\/ $\sA$ have\/ $\sL$\+covers.
 Then the class of objects\/ $\sL\subset\sA$ is closed under direct
limits.
\end{cor}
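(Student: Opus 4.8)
The plan is to reduce the statement to the closure of the kernel $\mathcal W=\sL\cap\sE=\Add(T)$ under direct limits, which is precisely the situation handled by Application~\ref{cotorsion-pair-appl}, and then to bootstrap from $\mathcal W$ up to $\sL$ using the bound on coresolution dimension. Throughout I use the tilting facts recalled above: $(\sL,\sE)$ is a complete hereditary cotorsion pair, $\sL\cap\sE=\Add(T)$, the $\sE$\+coresolution dimension of every object of $\sA$ is at most~$n$, and $\sE$ is closed under direct limits.

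First I would show that $\mathcal W=\Add(T)$ is closed under direct limits. Since $\Add(T)\subseteq\sE$ and $\sE$ is closed under direct limits, every $D\in\varinjlim^\sA\Add(T)$ already lies in $\sE$. By hypothesis $D$ has an $\sL$\+cover $\fq\:\Q\rarrow D$, which is an epimorphism (the $\sL$\+precover of any object is surjective, as $\sL$ contains a generator of $\sA$). By Wakamatsu's lemma the kernel of $\fq$ lies in $\sE$, the right-hand class of the cotorsion pair; as $D\in\sE$ and $\sE$ is closed under extensions, it follows that $\Q\in\sL\cap\sE=\Add(T)$. An $\sL$\+cover whose source lies in $\Add(T)\subseteq\sL$ is automatically an $\Add(T)$\+cover (it is an $\Add(T)$\+precover, and the minimality condition on endomorphisms of $\Q$ is unchanged), so $D$ has an $\Add(T)$\+cover. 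Application~\ref{cotorsion-pair-appl} with $M=T$ then yields $D\in\Add(T)$, and since $D$ was an arbitrary direct limit of objects of $\Add(T)$, the class $\Add(T)$ is closed under direct limits.

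Next I would lift this to $\sL$. Using completeness and heredity of the cotorsion pair together with the coresolution\+dimension bound, one checks that $\sL$ coincides with the class of objects admitting an $\Add(T)$\+coresolution $0\rarrow A\rarrow W^0\rarrow\dots\rarrow W^n\rarrow0$ with all $W^i\in\Add(T)$: for $A\in\sL$ iterated special $\sE$\+preenvelopes produce such a coresolution (each intermediate term is an extension of objects of $\sL$, hence lies in $\sL\cap\sE=\Add(T)$, and the process stops after $n$ steps), while conversely $\sL$ is closed under kernels of epimorphisms by heredity, so any object with such a coresolution lies in $\sL$. I would then prove by induction on $m$ that the class of objects of $\Add(T)$\+coresolution dimension at most~$m$ is closed under direct limits, the base case $m=0$ being the previous paragraph. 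For the inductive step, given a direct system $(L_x)$ of objects of coresolution dimension $\le m+1$ with limit $L$, I apply a functorial special $\sE$\+preenvelope $A\mapsto(0\rarrow A\rarrow E(A)\rarrow C(A)\rarrow0)$ termwise and pass to the direct limit; since direct limits are exact in $\sA$ this gives a short exact sequence $0\rarrow L\rarrow\varinjlim E(L_x)\rarrow\varinjlim C(L_x)\rarrow0$ in which $\varinjlim E(L_x)\in\Add(T)$ by Step~1 and $\varinjlim C(L_x)$ has coresolution dimension $\le m$ by the induction hypothesis. Hence $L$ has coresolution dimension $\le m+1$, and taking $m=n$ gives $L\in\sL$.

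The main obstacle is exactly the functoriality required in the inductive step: special $\sE$\+preenvelopes are not canonical, so to realize the first cosyzygy of $L$ as the direct limit of the cosyzygies $C(L_x)$ I must choose the preenvelopes compatibly across the whole diagram. This is where the completeness of the tilting cotorsion pair must be used in its strong, \emph{functorial} form (the special preenvelopes produced by the small object argument are functorial in $A$), so that applying $E$ and $C$ to the morphisms of the system $(L_x)$ yields an honest direct system of short exact sequences whose limit, by Ab5 exactness, is again short exact. I expect everything apart from the availability of functorial approximations to be routine; that single point is the crux, and it is supplied by the structure theory of tilting cotorsion pairs.
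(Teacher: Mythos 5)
Your first step is essentially the paper's own argument: the paper likewise reduces to producing $\Add(T)$\+covers for the objects of $\varinjlim^\sA\Add(T)$ (it cites \cite[Lemma~10.2]{BP3} at the point where you argue via Wakamatsu's lemma, but the content is the same) and then applies Application~\ref{cotorsion-pair-appl} to conclude that $\Add(T)$ is closed under direct limits. One small repair: your claim that $\sL$\+covers are epimorphisms ``since $\sL$ contains a generator'' is not available here, as a bare Ab5 category need not have a generator; the correct reason is completeness of the cotorsion pair, so that special $\sL$\+precovers are epimorphisms and covers, through which they factor, are epimorphisms too.

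The genuine gap is in your second step, and it is exactly the point you flag as the crux: functorial special $\sE$\+preenvelopes are not available in this generality, and the small object argument cannot supply them. The corollary concerns an arbitrary Ab5 category $\sA$ with an $n$\+tilting object in the sense of \cite{PS1}; such a category need not be locally presentable, need not have a generator, and need not have enough projective objects (it may have no nonzero projectives at all), so the tilting class $\sE=\{A\in\sA\mid\Ext^i_\sA(T,A)=0\text{ for all }i\ge1\}$ cannot even be rewritten as the $\Ext^1$\+orthogonal of a set of objects by passing to syzygies of $T$ --- which is what one would need before any small-object-argument machinery could be contemplated. The completeness of $(\sL,\sE)$ asserted in \cite[Theorem~3.4]{PS1} is obtained by entirely different (contramodule-theoretic) means and comes with no functoriality. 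Without a functorial assignment $A\mapsto(0\to A\to E(A)\to C(A)\to 0)$, the cosyzygies $C(L_x)$ do not form a direct system indexed by $X$, and your induction has no inductive step. This is precisely why the paper does not argue this way: for the implication ``$\Add(T)$ closed under direct limits $\Rightarrow\sL$ closed under direct limits'' it invokes \cite[Corollary~12.4\,(ii)\,$\Rightarrow$\,(i)]{BP3}, whose proof runs through different intermediate conditions (split direct limits in $\Add(T)$, perfect decompositions; see the chain of implications in \cite[Theorem~13.2]{BP3}) rather than through a termwise coresolution of the direct system. Your preliminary reductions --- identifying $\sL$ with the class of objects of $\Add(T)$\+coresolution dimension at most~$n$, and the dimension shifting via the Schanuel-type comparison --- are correct, but the missing functoriality is not a loose end that the ``structure theory of tilting cotorsion pairs'' can be expected to supply; it is the very reason this implication requires the apparatus of \cite{BP3}.
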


\begin{proof}
 It suffices to assume that every object of the class
$\varinjlim^\sA\Add(T)\subset\sA$ has an $\sL$\+cover.
 Then, by~\cite[Lemma~10.2]{BP3}, every object of
$\varinjlim^\sA\Add(T)$ as an $\Add(T)$\+cover (since
$\varinjlim^\sA\Add(T)\subset\varinjlim^\sA\sE\subset\sE$
by~\cite[Proposition~12.3]{BP3} and $\sL\cap\sE=\Add(T)$
by~\cite[Lemma~3.2(b)]{PS1}). 
  By Application~\ref{cotorsion-pair-appl}, it follows that the class
$\Add(T)$ is closed under direct limits in~$\sA$.
 According to~\cite[Corollary~12.4\,(ii)\,$\Rightarrow$\,(i)]{BP3},
this is equivalent to the class $\sL\subset\sA$ being closed under
direct limits.
 We refer to~\cite[Theorem~13.2\,(1)\,$\Rightarrow$\,\allowbreak(2)%
\,$\Rightarrow$\,\allowbreak(5)\,$\Rightarrow$\,\allowbreak(6)%
\,$\Rightarrow$\,(3)]{BP3} for a more detailed discussion.
 This argument only uses the basic properties of $n$\+tilting cotorsion
pairs in abelian categories.
\end{proof}

 So we have obtained a rather simple elementary proof of some of
the results in (the tilting case of) \cite[Corollary~5.5]{AST},
extended from the module categories to abelian categories with
exact direct limits.

\subsection{Weakly countably generated modules}
 Let $A$ be an associative ring and $M\in A\modl$ be a left $A$\+module.

 A result which we call the ``generalized tilting theory'' establishes
an equivalence $\Add(M)\simeq\R\contra_\proj$ between the full
subcategory $\Add(M)\subset A\modl$ and the full subcategory of
projective contramodules $\R\contra_\proj\subset\R\contra$ over
a certain topological ring~$\R$ \cite[Theorems~7.1 and~9.9]{PS1},
\cite[Section~2]{BP3}.
 Moreover, this equivalence of categories is obtained as a restriction
of a pair of adjoint functors
\begin{equation} \label{psi-phi-adjunction}
 \Psi\:A\modl\,\rightleftarrows\,\R\contra\,:\!\Phi
\end{equation}
where the right adjoint functor $\Psi$ can be computed as
$\Psi(N)=\Hom_A(M,N)$, while the left adjoint functor $\Phi$ is
the contratensor product $\Phi(\C)=M\ocn_\R\C$.
 In particular, the left $A$\+module $M\in\Add(M)$ corresponds to
the free left $\R$\+contramodule with one generator $\R=\R[[*]]\in
\R\contra_\proj$ \cite[Proposition~7.3]{PS1}, \cite[Theorem~3.13]{PS3}.

 Here the underlying abstract ring of the topological ring $\R$ is
the ring of endomorphisms $\R=\Hom_A(M,M)$ of the $A$\+module~$M$.
 We use the convention in which the ring $\R$ acts on the module $M$
on the right, making $M$ an $A$\+$\R$\+bimodule.
 The composition multiplication in $\Hom_A(M,M)$ is defined
accordingly.
 However, there is a certain flexibility in the choice of a topology
on the ring~$\R$.
 In particular, one can use the \emph{finite
topology}~\cite[Theorem~7.1]{PS1}, \cite[Example~3.7(1)]{PS3} or
the \emph{weakly finite topology}~\cite[Theorem~9.9]{PS1},
\cite[Example~3.10(2)]{PS3}.

 Let us briefly recall the definitions of these topologies.
 A left $A$\+module $E$ is said to be \emph{weakly finitely generated}
(or \emph{dually slender}~\cite{Zem}) if the natural map
$\bigoplus_{x\in X}\Hom_A(E,N_x)\rarrow\Hom_A(E,\bigoplus_{x\in X}N_x)$
is an isomorphism for every family of left $A$\+modules
$(N_x)_{x\in X}$.
 In the \emph{finite topology} on the ring $\Hom_A(M,M)$,
the annihilators of finitely generated submodules $F\subset M$ form
a base of neighborhoods of zero.
 In the \emph{weakly finite topology} on the ring $\Hom_A(M,M)$,
the annihilators of weakly finitely generated submodules $E\subset M$
form a base of neighborhoods of zero.
 In any one of these two topologies, $\R=\Hom_A(M,M)$ is
a complete, separated right linear topological ring.

 Let us say that a left $A$\+module $M$ is \emph{weakly countably
generated} if there exists a suitable complete, separated right linear
topological ring structure on the ring $\R=\Hom_A(M,M)$ with
a countable base of neighborhoods of zero in~$\R$.
 Here a ``suitable topological ring structure on the endomorphism
ring~$\R$'' means that there is a pair of adjoint functors $\Psi$ and
$\Phi$ \eqref{psi-phi-adjunction} whose restrictions to the full
subcategories $\Add(M)\subset A\modl$ and $\R\contra_\proj\subset
\R\contra$ are mutually inverse equivalences of categories
$\Add(M)\simeq\R\contra_\proj$ assigning the free left
$\R$\+contramodule with one generator $\R=\R[[*]]$ to
the left $A$\+module~$M$.

\begin{lem}
 Let $M$ be a left $A$\+module such that $M=\sum_{i=1}^\infty E_i$,
where $(E_i\subset M)_{i=1}^\infty$ is a countable set of submodules
in $M$ and all the $A$\+modules $E_i$ are weakly finitely generated.
 Then the $A$\+module $M$ is weakly countably generated.
\end{lem}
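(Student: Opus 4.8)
The plan is to equip the endomorphism ring $\R=\Hom_A(M,M)$ with the \emph{weakly finite topology} and to check that, under the stated hypothesis, this topology admits a countable base of neighborhoods of zero. The weakly finite topology is already known to make $\R$ into a complete, separated right linear topological ring for which the adjunction~\eqref{psi-phi-adjunction} restricts to an equivalence $\Add(M)\simeq\R\contra_\proj$ carrying $M$ to the free contramodule $\R[[*]]$, by~\cite[Theorem~9.9]{PS1} and~\cite[Example~3.10(2)]{PS3}; hence it is a ``suitable'' structure in the sense of the definition of weak countable generation, and the only thing left to produce is a countable base.

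The candidate base is the family of right ideals $\I_n=\{r\in\R\mid F_nr=0\}$, where $F_n=E_1+\dotsb+E_n$, for $n\ge1$. First I would record that each $F_n$ is weakly finitely generated. The class of weakly finitely generated (dually slender) modules is closed under finite direct sums, which is immediate from the definition since $\Hom_A(E\oplus F,{-})$ splits as a product of $\Hom_A(E,{-})$ and $\Hom_A(F,{-})$, and it is closed under quotients: given a surjection $E\twoheadrightarrow E'$ and a morphism $\varphi\:E'\rarrow\bigoplus_x N_x$, the map $\varphi$ and its composite with $E\twoheadrightarrow E'$ have the same image, which lies in a finite subcoproduct because $E$ is dually slender, so $\varphi$ factors through that subcoproduct. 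Since $F_n$ is a quotient of $E_1\oplus\dotsb\oplus E_n$ via the sum map, it is weakly finitely generated, and therefore each $\I_n$ is a basic open neighborhood of zero in the weakly finite topology.

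The heart of the argument is to show that the $\I_n$ in fact form a base, i.e.\ that every basic neighborhood $\{r\in\R\mid Er=0\}$ (with $E\subseteq M$ weakly finitely generated) contains some $\I_n$. As the annihilator is order-reversing in $E$, this reduces to the assertion that every weakly finitely generated submodule $E\subseteq M$ is contained in some $F_n$, and this is the step I expect to be the main obstacle. I would handle it as follows. The sum $M=\sum_i E_i=\bigcup_n F_n$ is the union of the increasing chain $F_1\subseteq F_2\subseteq\dotsb$, so each $m\in M$ lies in $F_N$ for $N$ large and thus has zero image in $M/F_n$ for all $n\ge N$. Consequently the projections $M\rarrow M/F_n$ assemble into a well-defined homomorphism $\theta\:M\rarrow\bigoplus_{n\ge1}M/F_n$ whose image lands in the coproduct. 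Restricting to $E$ and using that $E$ is dually slender, the map $\theta|_E$ factors through a finite subcoproduct $\bigoplus_{n=1}^N M/F_n$, which forces the $n$\+th component $E\rarrow M/F_n$ to vanish for every $n>N$; that is, $E\subseteq F_n$ for all $n>N$, and in particular $E\subseteq F_{N+1}$.

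Putting the steps together, $\{\I_n\}_{n\ge1}$ is a countable base of neighborhoods of zero for the weakly finite topology on $\R$. Since this topology is suitable in the required sense, its possession of a countable base exhibits $M$ as weakly countably generated, completing the proof.
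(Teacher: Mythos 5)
Your proposal is correct and follows essentially the same route as the paper's proof: equip $\R$ with the weakly finite topology, take the annihilators of $F_n=E_1+\dotsb+E_n$ as the candidate countable base, and establish the key containment $E\subseteq F_n$ by mapping $M$ into $\bigoplus_n M/F_n$ (well-defined since $M=\bigcup_n F_n$) and invoking dual slenderness of $E$ to force the tail components to vanish. The only difference is that you spell out the closure of dually slender modules under finite sums and quotients, which the paper treats as immediate.
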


\begin{proof}
 We consider the weakly finite topology on the ring~$\R$.
 In view of~\cite[Theorem~9.9]{PS1} and~\cite[Theorem~3.13]{PS3},
it only remains to show that $\R$ has a countable base of
neighborhoods of zero.
 We claim that the annihilators of the submodules $E_1+\dotsb+E_n
\subset M$, \ $n\ge1$, form such a base.

 Indeed, a finite sum of weakly finitely generated submodules is
clearly weakly finitely generated.
 So it remains to check that, for any weakly finitely generated
submodule $E\subset M$ there exists $n\ge1$ such that
$E\subset\sum_{i=1}^nE_i$.
 For this purpose, put $N_m=M/\sum_{i=1}^mE_i$ and consider
the family of left $A$\+modules $(N_m)_{m=1}^\infty$.
 Consider the left $A$\+module morphism $f\:M\rarrow\prod_{m=1}^\infty
N_m$ whose components are the epimorphisms $M\rarrow N_m$.
 Since $M=\sum_{i=1}^\infty E_i$, the image of the map~$f$
is contained in the submodule $\bigoplus_{m=1}^\infty N_m
\subset\prod_{m=1}^\infty N_m$.

 So we have an $A$\+module morphism $g\:M\rarrow
\bigoplus_{m=1}^\infty N_m$.
 Denote by $h=g|_E\:E\rarrow\bigoplus_{m=1}^\infty N_m$
the restriction of the morphism~$g$ to the submodule $E\subset M$.
 Since the $A$\+module $E$ is weakly finitely generated, there
exists an integer $n\ge1$ such that the image of the morphism~$h$
is contained in the submodule $\bigoplus_{m=1}^{n-1}N_m
\subset\bigoplus_{m=1}^\infty N_m$.
 This means exactly that $E\subset\sum_{i=1}^n E_i$.
\end{proof}

 Let $A$ be an associative ring and $M$ be a left $A$\+module.
 Let $M\overset{f_1}\rarrow M\overset{f_2}\rarrow M\overset{f_3}
\rarrow\dotsb$ be a countable direct system of copies of~$M$.
 Consider the related telescope short exact sequence of left
$A$\+modules
\begin{equation} \label{telescope}
 0\lrarrow\bigoplus\nolimits_{n=0}^\infty M\overset i\lrarrow
 \bigoplus\nolimits_{n=0}^\infty M\overset p\lrarrow
 \varinjlim\nolimits_{n\ge0}M\lrarrow0.
\end{equation}
 Following~\cite[Section~6]{BP3}, we will say that a left $A$\+module
$M$ satisfies the \emph{telescope Hom exactness condition}
(\emph{THEC}) if, for every sequence of endomorphisms~$f_1$, $f_2$,
$f_3$,~\dots~$\in\Hom_A(M,M)$ the telescope short exact
sequence~\eqref{telescope} stays exact after applying the functor
$\Hom_A(M,{-})$.
 It is worth noticing that the short exact sequence~\eqref{telescope}
staying exact after $\Hom_A(M,{-})$ is applied means precisely that
the epimorphism $\bigoplus_{n=0}^\infty M\overset p\rarrow D$ is
an \emph{$\Add(M)$\+precover} of the $A$\+module
$D=\varinjlim_{n\ge0}M$.

 Furthermore, let us say that an $A$\+module $N$ is
\emph{$\Sigma$\+pure-rigid} if the pure Ext group
$\PExt_A^1(N,N^{(\omega)})$ vanishes.
 All $\Sigma$\+pure-rigid modules satisfy THEC (since the short
exact sequence~\eqref{telescope} is pure and the pullback of
a pure exact sequence~\eqref{telescope} with respect to any
$A$\+module morphism $M\rarrow D$ is pure).
 There are also other sufficient conditions for THEC discussed
in~\cite{BP3}.

 A family of left $A$\+modules $(M_\theta)_{\theta\in\Theta}$ is
said to be \emph{locally T\+nilpotent} if for every sequence of
indices $\theta_1$, $\theta_2$, $\theta_3$,~\dots~$\in\Theta$,
every sequence of nonisomorphisms $f_i\in
\Hom_A(M_{\theta_i},M_{\theta_{i+1}})$, \,$i=1$, $2$, $3$,~\dots,
and every element $b\in M_{\theta_1}$, there exists an integer $n\ge1$
such that $f_nf_{n-1}\dotsm f_1(b)=0$ in $M_{\theta_{n+1}}$.
 A left $A$\+module $M$ is said to have a \emph{perfect
decomposition}~\cite{AS} if there exists a locally T\+nilpotent family
of left $A$\+modules $(M_\theta)_{\theta\in\Theta}$ such that
$M\simeq\bigoplus_{\theta\in\Theta}M_\theta$.

\begin{appl} \label{weakly-countably-generated-application}
 Let $A$ be an associative ring and $M$ be a weakly countably generated
left $A$\+module.
 Suppose that $M$ satisfies THEC.
 Assume that all the countable direct limits of copies of $M$ in
the category of left $A$\+modules $A\modl$ have\/ $\Add(M)$\+covers.
 Then the class of objects\/ $\Add(M)\subset A\modl$ is closed under
direct limits, and the $A$\+module $M$ has a perfect decomposition.
\end{appl}

\begin{proof}
 By assumption, we have a natural equivalence $\Add(M)\simeq
\R\contra_\proj$.
 Under this equivalence, direct systems $M\overset{f_1}\rarrow M
\overset{f_2}\rarrow M\overset{f_3}\rarrow\dotsb$ of copies of
the $A$\+module $M$ corresponds to direct systems
$\R\overset{*a_1}\rarrow\R\overset{*a_2}\rarrow\R\overset{*a_3}
\rarrow\dotsb$ of copies of the left $\R$\+contramodule $\R$
(where $a_n=f_n\in\Hom_A(M,M)=\R$).
 The direct limit of the latter direct system is a Bass flat
left $\R$\+contramodule~$\B$.

 Following the proof
of~\cite[Corollary~6.7\,(1)\,$\Rightarrow$\,(2)]{BP3}, under
our THEC assumption the left $A$\+module $D=\varinjlim_{n\ge1}M$
has an $\Add(M)$\+cover if and only if the left $\R$\+contramodule
$\B$ has a projective cover.
 If this is the case, then by Corollary~\ref{bass-flat-contra-cover}
the left $\R$\+contramodule $\B$ is projective.

 Alternatively, one can use
Theorem~\ref{ab5-direct-limit-precover-cover-thm}
(in the category of left $A$\+modules $\sA=A\modl$) in order
to conclude, from the assumptions that $p$~is an $\Add(M)$\+precover
and $D$ has an $\Add(M)$\+cover, that the epimorphism of
left $A$\+modules $p\:\bigoplus_{n=1}^\infty M\rarrow D$ splits.
 Consequently, we have $D\in\Add(M)$.
 By~\cite[Corollary~6.7\,(3)\,$\Rightarrow$\,(4)]{BP3}, the left
$\R$\+contramodule $\B$ is projective.

 As this holds for all Bass flat left $\R$\+contramodules,
by~\cite[Proposition~4.3 and Lemma~6.3]{Pproperf} it follows that
all discrete right $\R$\+modules are coperfect (i.~e., all descending
chains of cyclic submodules in discrete right $\R$\+modules terminate).
 Now \cite[Theorem~12.4]{PS3} is applicable, since the topological ring
$\R$ has a countable base of neighborhoods of zero by assumption.
 Thus we can conclude that the topological ring $\R$ is topologically
left perfect (that is, the Jacobson radical $\H\subset\R$ is
topologically left T\+nilpotent and strongly closed in $\R$, and
the quotient ring $\R/\H$ in its quotient topology is
topologically semisimple).

 Hence, according
to~\cite[Theorem~14.1\,(iv)\,$\Rightarrow$\,(iii$'$)]{PS3}, the class of
all projective left $\R$\+contra\-modules is closed under direct limits
in $\R\contra$.
 Following~\cite[Corollary~9.9]{PS3}, this means that the full
subcategory $\Add(M)\subset A\modl$ has split direct limits.
 In particular, by~\cite[Lemma~9.2(b)]{PS3} or~\cite[Lemma~6.5]{BP3},
\,$\Add(M)$ is closed under direct limits in $A\modl$.
 By~\cite[Theorem~1.4]{AS}, split direct limits in $\Add(M)$ imply
that the $A$\+module $M$ has perfect decomposition.
\end{proof}

\medskip
\noindent\textbf{Acknowledgement.}
 We are grateful to Jan \v Saroch for very helpful discussions and
communications.
 We also wish to thank Jan Trlifaj for illuminating conversations.
 We are grateful to an anonymous referee for careful reading of
the manuscript and several useful suggestions.
 The first-named author was partially supported by MIUR-PRIN
(Categories, Algebras: Ring-Theoretical and Homological
Approaches-CARTHA) and DOR1828909 of Padova University.
 The second-named author is supported by research plan
RVO:~67985840. 
 The third-named author was supported by the Czech Science
Foundation grant number 17-23112S.

\medskip


\begin{thebibliography}{99}
\smallskip

\bibitem{AF}
 F.~W.~Anderson, K.~R.~Fuller.
   Rings and categories of modules.
Second edition.  Graduate Texts in Mathematics~13,
Springer, 1974--92.

\bibitem{AS}
 L.~Angeleri H\"ugel, M.~Saor\'\i n.
   Modules with perfect decompositions.
\textit{Math.\ Scand.}\ \textbf{98}, \#1, p.~19--43, 2006.

\bibitem{AST}
 L.~Angeleri H\"ugel, J.~\v Saroch, J.~Trlifaj.
   Approximations and Mittag--Leffler conditions---the applications.
\textit{Israel Journ.\ of Math.}\ \textbf{226}, \#2, p.~757--780,
2018.  \texttt{arXiv:1612.01140 [math.RA]}

\bibitem{Az0}
 G.~Azumaya.
   Finite splitness and finite projectivity.
\textit{Journ.\ of Algebra} \textbf{106}, \#1, p.~114--134, 1987.

\bibitem{Az1}
 G.~Azumaya.
   Locally split submodules and modules with perfect endomorphism
rings.
In: ``Noncommutative ring theory (Athens, OH, 1989)'', p.~1--6,
\textit{Lecture Notes in Math.} \textbf{1448}, Springer, Berlin, 1990.

\bibitem{Az2}
 G.~Azumaya.
   Locally pure-projective modules.
In: ``Azumaya algebras, actions, and modules (Bloomington, IN, 1990)'',
p.~17--22, \textit{Contemporary Math.} \textbf{124}, American Math.\
Society, Providence, 1992.

\bibitem{Bas}
 H.~Bass.
   Finitistic dimension and a homological generalization of
semi-primary rings.
\textit{Trans.\ of the Amer.\ Math.\ Soc.}\ \textbf{95}, \#3,
p.~466--488, 1960.

\bibitem{BPobsolete}
 S.~Bazzoni, L.~Positselski.
   Contramodules over pro-perfect topological rings, the covering
property in categorical tilting theory, and homological
ring epimorphisms.
Electronic preprint \texttt{arXiv:1807.10671v1 [math.CT]}.

\bibitem{BP3}
 S.~Bazzoni, L.~Positselski.
   Covers and direct limits: a contramodule-based approach.
\textit{Math.\ Zeitschrift}, published online at
\texttt{https://doi.org/10.1007/s00209-020-02654-x} in January~2021.
\texttt{arXiv:1907.05537v4 [math.CT]}.

\bibitem{Ch}
 S.~U.~Chase.
   Direct products of modules.
\textit{Trans.\ of the Amer.\ Math.\ Soc.}\ \textbf{97}, \#3,
p.~457--473, 1960.

\bibitem{GG}
 J.~L.~G\'omez Pardo, P.~A.~Guil Asensio.
   Big direct sums of copies of a module have well
behaved indecomposable decompositions.
\textit{Journ. of Algebra} \textbf{232}, \#1, p.~86--93, 2000.

\bibitem{Gov}
 V.~E.~Govorov.
   On flat modules (Russian).
\textit{Sibir.\ Mat.\ Zh.}\ \textbf{6}, p.~300--304, 1965.

\bibitem{GT}
 R.~G\"obel, J.~Trlifaj.
   Approximations and endomorphism algebras of modules.
Second revised and extended edition.  De Gruyter, 2012.

\bibitem{IMR}
 M.~C.~Iovanov, Z.~Mesyan, M.~L.~Reyes.
   Infinite-dimensional diagonalization and semisimplicity.
\textit{Israel Journ.\ of Math.}\ \textbf{215}, \#2, p.~801--855,
2016.  \texttt{arXiv:1502.05184 [math.RA]}

\bibitem{JR}
 S.~Janakiraman, K.~M.~Rangaswamy.
   Strongly pure subgroups of abelian groups.
In: ``Group theory (Proc. Miniconf., Australian Nat. Univ.,
Canberra, 1975)'', p.~57--65, \textit{Lecture Notes in Math.},
\textbf{573}, Springer, Berlin, 1977.

\bibitem{Laz}
 D.~Lazard.
   Autour de la platitude.
\textit{Bull.\ Soc.\ Math.\ France}\ \textbf{97}, p.~81--128, 1969.
 
\bibitem{Pcoun}
 L.~Positselski.
   Flat ring epimorphisms of countable type.
\textit{Glasgow Math.\ Journ.} \textbf{62}, \#2, p.~383--439, 2020.
\texttt{arXiv:1808.00937 [math.RA]}
 
\bibitem{Pproperf}
 L.~Positselski.
   Contramodules over pro-perfect topological rings.
Electronic preprint \texttt{arXiv:1807.10671v4 [math.CT]}.
{\hbadness=3800\par}

\bibitem{PR}
 L.~Positselski, J.~Rosick\'y.
   Covers, envelopes, and cotorsion theories in locally presentable
abelian categories and contramodule categories.
\textit{Journ.\ of Algebra} \textbf{483}, p.~83--128, 2017.
\texttt{arXiv:1512.08119 [math.CT]}  

\bibitem{PS1}
 L.~Positselski, J.~\v S\v tov\'\i\v cek.
   The tilting-cotilting correspondence.
\textit{Internat.\ Math.\ Research Notices} \textbf{2021}, \#1,
p.~189--274, 2021.  \texttt{arXiv:1710.02230 [math.CT]} 

\bibitem{PS3}
 L.~Positselski, J.~\v S\v tov\'\i\v cek.
   Topologically semisimple and topologically perfect topological rings.
Electronic preprint \texttt{arXiv:1909.12203v3 [math.CT]}.   

\bibitem{Rang}
 K.~M.~Rangaswamy.
   An aspect of purity and its dualisation in abelian groups and modules.
\textit{Symposia Math.}\ \textbf{XXIII} (``Conf.\ Abelian Groups and
their Relationship to the Theory of Modules, INDAM, Rome, 1977''),
p.~307--320, Academic Press, London--New York, 1979.

\bibitem{RR}
 V.~S.~Ramamurthi, K.~M.~Rangaswamy.
   On finitely injective modules.
\textit{Journ.\ Australian Math.\ Soc.}\ \textbf{16}, \#2,
p.~239--248, 1973.

\bibitem{Sar}
 J.~\v Saroch.
   Approximations and Mittag--Leffler conditions---the tools.
\textit{Israel Journ.\ of Math.}\ \textbf{226}, \#2, p.~737--756,
2018.  \texttt{arXiv:1612.01138 [math.RA]}

\bibitem{Wis}
 R.~Wisbauer.
   Foundations of Module and Ring Theory:
A Handbook for Study and Research.
Gordon and Breach Science Publishers, Reading, 1991.

\bibitem{YY}
 Y.~Yang, X.~Yan.
   Strict Mittag-Leffler conditions and locally split morphisms.
\textit{Czechoslovak Math.\ Journ.}\ \textbf{68} (\textbf{143}),
\#3, p.~677--686, 2018.

\bibitem{Zem}
 J.~\v Zemli\v cka.
   Classes of dually slender modules.
Proceedings of the Algebra Symposium, Cluj, 2005, Editura Efes,
Cluj-Napoca, 2006, p.~129--137.

\bibitem{Zim}
 W.~Zimmermann.
   On locally pure-injective modules.
\textit{Journ.\ of Pure and Appl.\ Algebra} \textbf{166}, \#3,
p.~337--357, 2002. 

\end{thebibliography}
\end{document}